\renewenvironment{proof}[1][\proofname] {\par\pushQED{\qed}\normalfont\topsep6\p@\@plus6\p@\relax\trivlist\item[\hskip\labelsep\bfseries#1\@addpunct{.}]\ignorespaces}{\popQED\endtrivlist\@endpefalse}
\newtheorem{theorem}{\bf Theorem}[section]
\newtheorem{lemma}[theorem]{\bf Lemma}
\newtheorem{corollary}[theorem]{\bf Corollary}
\newtheorem{conjecture}[theorem]{\bf Conjecture}
\newtheorem{question}[theorem]{\bf Question}
\theoremstyle{definition}
\newtheorem{definition}[theorem]{\bf Definition}
\def\ex{\mathrm{ex}}
\def\d{\delta}
\def \cG{\mathcal{G}}
\def\e{\varepsilon}
\def\Hom{\mathrm{hom}}
\title{Rainbow Tur\'an number of even cycles, repeated patterns and blow-ups of cycles}
\author{Oliver Janzer\thanks{Department of Mathematics, ETH Z\"urich, Switzerland.
E-mail: {\tt oliver.janzer@math.ethz.ch}. Research supported by an ETH Z\"urich Postdoctoral Fellowship 20-1 FEL-35.}}
\date{}
\begin{document}

\maketitle

\begin{abstract}
	The rainbow Tur\'an number $\ex^*(n,H)$ of a graph $H$ is the maximum possible number of edges in a properly edge-coloured $n$-vertex graph with no rainbow subgraph isomorphic to $H$. We prove that for any integer $k\geq 2$, $\ex^*(n,C_{2k})=O(n^{1+1/k})$. This is tight and establishes a conjecture of Keevash, Mubayi, Sudakov and Verstra\"ete. We use the same method to prove several other conjectures in various topics. First, we prove that there exists a constant $c$ such that any properly edge-coloured $n$-vertex graph with more than $cn(\log n)^4$ edges contains a rainbow cycle. It is known that there exist properly edge-coloured $n$-vertex graphs with $\Omega(n\log n)$ edges which do not contain any rainbow cycle.
	
	Secondly, we show that in any proper edge-colouring of $K_n$ with $o(n^{\frac{r}{r-1}\cdot \frac{k-1}{k}})$ colours, there exist $r$ colour-isomorphic, pairwise vertex-disjoint copies of $C_{2k}$. This proves in a strong form a conjecture of Conlon and Tyomkyn, and a strenghtened version proposed by Xu, Zhang, Jing and Ge.
	
	Moreover, we answer a question of Jiang and Newman by showing that there exists a constant $c=c(r)$ such that any $n$-vertex graph with more than $cn^{2-1/r}(\log n)^{7/r}$ edges contains the $r$-blowup of an even cycle. Finally, we prove that the $r$-blowup of $C_{2k}$ has Tur\'an number $O(n^{2-\frac{1}{r}+\frac{1}{k+r-1}+o(1)})$, which can be used to disprove an old conjecture of Erd\H os and Simonovits.
\end{abstract}

\section{Introduction}

In this paper we develop a method that allows us to find cycles with suitable extra properties in graphs with sufficiently many edges. We give applications in three different areas, which are introduced in the next three subsections.

\subsection{Rainbow Tur\'an numbers}

For a family of graphs $\mathcal{H}$, the Tur\'an number (or extremal number) $\ex(n,\mathcal{H})$ is the maximum number of edges in an $n$-vertex graph which does not contain any $H\in \mathcal{H}$ as a subgraph. When $\mathcal{H}=\{H\}$, we write $\ex(n,H)$ for the same function. This function is determined asymptotically by the Erd\H os--Stone--Simonovits \cite{ES46,ESi66} theorem when $H$ has chromatic number at least $3$. However, for bipartite graphs $H$, even the order of magnitude of $\ex(n,H)$ is unknown in general. For example, a result of Bondy and Simonovits \cite{BS74} states that $\ex(n,C_{2k})=O(n^{1+1/k})$, but a matching lower bound is only known when $k\in \{2,3,5\}$.

A variant of this function was introduced by Keevash, Mubayi, Sudakov and Verstra\"ete in \cite{KMSV03}. In an edge-coloured graph, we say that a subgraph is rainbow if all its edges are of different colour. The rainbow Tur\'an number of the graph $H$ is then defined to be the maximum number of edges in a properly edge-coloured $n$-vertex graph that does not contain a rainbow $H$ as a subgraph. This number is denoted by $\ex^*(n,H)$. Clearly, $\ex^*(n,H)\geq \ex(n,H)$ for every $n$ and $H$. Keevash, Mubayi, Sudakov and Verstra\"ete proved, among other things, that for any non-bipartite graph $H$, we have $\ex^*(n,H)=(1+o(1))\ex(n,H)$. Hence, the most challenging case again seems to be when $H$ is bipartite. Keevash et al. \cite{KMSV03} showed that $\ex^*(n,K_{s,t})=O(n^{2-1/s})$, which is tight when $t>(s-1)!$ \cite{KRS96,ARS99}. The function has also been studied for trees (see \cite{JPS17,EGM19,JR19}). About even cycles, Keevash et al. proved the following lower bound.

\begin{theorem}[Keevash--Mubayi--Sudakov--Verstra\"ete \cite{KMSV03}]
	For any integer $k\geq 2$,
	$$\ex^*(n,C_{2k})=\Omega(n^{1+1/k}).$$
\end{theorem}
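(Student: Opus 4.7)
The plan is to give an explicit algebraic construction based on a Sidon-type set in a cyclic group. Let $p$ be a prime with $p = \Theta(n)$, take two copies $A, B$ of $\mathbb{Z}_p$ as the vertex parts of a bipartite graph $G$, fix a set $S \subseteq \mathbb{Z}_p$ to be chosen below, and join $a \in A$ to $b \in B$ exactly when $b - a \in S$. I colour this edge by the difference $b - a \in S$. Since each vertex of $A$ has exactly one neighbour realising each value of $b-a$ (and likewise for $B$), this is automatically a proper edge colouring. The graph has $p |S|$ edges.

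The crucial step is to choose $S$ so that $G$ contains no rainbow $C_{2k}$. I would take $S$ to be a $B_k$-set, meaning that every equation $s_1 + s_2 + \cdots + s_k = t_1 + t_2 + \cdots + t_k$ with $s_i, t_j \in S$ forces the multisets $\{s_i\}$ and $\{t_j\}$ to agree. The point is that if $v_0 v_1 \cdots v_{2k-1} v_0$ is any $2k$-cycle with $v_{2i} \in A$, $v_{2i+1} \in B$, and if $c_j$ denotes the colour of $v_j v_{j+1}$, then
\[
c_{2i} = v_{2i+1} - v_{2i}, \qquad c_{2i+1} = v_{2i+1} - v_{2i+2},
\]
so $\sum_{i=0}^{k-1} c_{2i} - \sum_{i=0}^{k-1} c_{2i+1}$ telescopes to $0$. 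In a rainbow $C_{2k}$ the $2k$ values $c_0, c_1, \ldots, c_{2k-1}$ are pairwise distinct elements of $S$, and the $B_k$ property applied to this identity would force the multisets $\{c_0, c_2, \ldots, c_{2k-2}\}$ and $\{c_1, c_3, \ldots, c_{2k-1}\}$ to coincide, contradicting distinctness.

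To finish, I would invoke the classical Bose--Chowla construction to guarantee, for every $k \geq 2$ and infinitely many primes $p$, the existence of a $B_k$-set $S \subseteq \mathbb{Z}_p$ with $|S| = \Theta(p^{1/k})$. This produces a properly edge-coloured graph on $2p$ vertices with $p|S| = \Theta(p^{1+1/k}) = \Omega(n^{1+1/k})$ edges and no rainbow $C_{2k}$, as required. For arbitrary $n$, Bertrand's postulate supplies a suitable prime $p$ with $n/3 \leq p \leq n/2$, and one pads with isolated vertices.

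The main conceptual obstacle is isolating the right algebraic relation forced by a rainbow $C_{2k}$ (namely, the telescoping identity producing two equal sums of $k$ colours each) and matching it precisely with the defining property of a $B_k$-set; everything else --- the properness of the colouring, the edge count, and the existence of $B_k$-sets of the optimal size --- is either immediate or a direct appeal to Bose--Chowla.
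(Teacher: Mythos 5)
The paper does not actually prove this lower bound itself---it is imported from Keevash, Mubayi, Sudakov and Verstra\"ete \cite{KMSV03}---and your construction is essentially their original one: a bipartite difference graph over a cyclic group built from a (generalized) Sidon set of order $k$, coloured by differences, with the telescoping identity $\sum_i c_{2i}=\sum_i c_{2i+1}$ around any $2k$-cycle contradicting the $B_k$ property for a rainbow copy. The argument is correct as written; the one loose end is that Bose--Chowla produces $B_k$-sets in $\mathbb{Z}_{q^k-1}$ rather than in $\mathbb{Z}_p$ for prime $p$, but this is routine to repair (either run the whole construction in $\mathbb{Z}_{q^k-1}$, where primality of the modulus is never used, or lift a Bose--Chowla set to the integers in $[N]$ with $kN<p$ so that no wraparound occurs in $\mathbb{Z}_p$).
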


They conjectured that this is tight.

\begin{conjecture}[Keevash--Mubayi--Sudakov--Verstra\"ete \cite{KMSV03}] \label{conj:rainbowc2k}
	For any integer $k\geq 2$,
	$$\ex^*(n,C_{2k})=\Theta(n^{1+1/k}).$$
\end{conjecture}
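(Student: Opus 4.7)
The plan is to prove the upper bound $\ex^*(n,C_{2k})=O(n^{1+1/k})$, which together with the lower bound stated above will settle the conjecture. Fix $k\geq 2$ and let $G$ be a properly edge-coloured graph on $n$ vertices with at least $Cn^{1+1/k}$ edges, where $C=C(k)$ is a sufficiently large constant; the goal is to exhibit a rainbow copy of $C_{2k}$ in $G$. By iteratively deleting vertices of degree less than $\tfrac{C}{2}n^{1/k}$ I pass to a non-empty subgraph with minimum degree $\delta\geq \tfrac{C}{2}n^{1/k}$, which in particular satisfies $\delta\gg k$.

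The first substantive step is a Bondy--Simonovits style count of rainbow paths of length $k$. Starting from any vertex $v_{0}$, I build a rainbow path greedily: after $i$ steps one sits at a vertex $v_{i}$ and has used colours $c_{1},\dots,c_{i}$, so at most $i$ edges at $v_{i}$ are blocked by the rainbow condition (the colouring is proper, hence each colour appears on at most one edge incident to $v_{i}$) and at most $i$ further edges lead back to previously visited vertices; hence at least $\delta-2k\geq \delta/2$ valid extensions remain. Iterating gives $(\delta/2)^{k}$ rainbow paths of length $k$ from each starting vertex, so $\Omega(n\delta^{k})$ in total. Writing $p(u,v)$ for the number of rainbow length-$k$ paths with endpoints $u$ and $v$, Cauchy--Schwarz yields
\[
\sum_{u,v}p(u,v)^{2}\;\geq\;\frac{\bigl(\sum_{u,v}p(u,v)\bigr)^{2}}{n^{2}}\;\gtrsim\;\delta^{2k},
\]
which for $C$ large is much bigger than $n^{2}$; so there are many endpoint pairs $(u,v)$ admitting many pairs of distinct rainbow length-$k$ paths, and a routine cleanup reduces to the case where these paths are internally vertex-disjoint.

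The genuinely new obstacle is that two internally vertex-disjoint rainbow $u$-$v$ paths of length $k$ merge into a rainbow $C_{2k}$ \emph{only if} their colour palettes are disjoint, and classical even-cycle arguments offer no control over this. My approach would be a dichotomy. Fix an endpoint pair $(u,v)$ with a large family $\mathcal{P}$ of internally vertex-disjoint rainbow $u$-$v$ paths; if some two paths in $\mathcal{P}$ have disjoint colour sets, one obtains the rainbow $C_{2k}$ directly. Otherwise every pair of paths in $\mathcal{P}$ shares a colour, and a short double count of (path, colour) incidences exhibits a single colour $c^{*}$ appearing on a constant fraction of paths in $\mathcal{P}$. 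Using that each colour class is a matching, I would then try to either iterate---discarding the $O(n)$ edges of colour $c^{*}$ and arguing that the counts degrade by only a constant factor---or exploit the implied structural regularity to locate a rainbow $C_{2k}$ on a different subset of vertices. The main technical obstacle, I expect, is precisely to carry this iteration (or an appropriate weighted/probabilistic variant of it) through without losing the $\Omega(n\delta^{k})$ count on rainbow paths, since crude deletions would cost $\Omega(n^{1+1/k})$ edges across the process; it is presumably here that the new method advertised in the introduction does its real work.
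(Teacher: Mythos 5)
Your write-up correctly reduces the problem to the upper bound and gets as far as the classical Bondy--Simonovits skeleton: minimum-degree cleaning, a greedy count of $\Omega(n\delta^k)$ rainbow paths of length $k$, and Cauchy--Schwarz to find endpoint pairs carrying at least two such paths. But the proof is not complete: the step you yourself flag as ``the genuinely new obstacle'' --- ensuring that two rainbow $u$--$v$ paths have \emph{disjoint} colour palettes --- is exactly where the content of the theorem lies, and your proposed dichotomy does not resolve it. In the bad case of your dichotomy you find one colour $c^{*}$ on a $1/k$-fraction of the paths in $\mathcal{P}$; deleting its colour class costs only $O(n)$ edges, but nothing guarantees that the surviving paths through $(u,v)$ no longer pairwise conflict (a different colour can now be popular), so the iteration can shrink $\mathcal{P}$ geometrically and terminate without producing a rainbow cycle, and restarting the whole path count after each deletion is not controlled. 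As written this is a plan with a hole at the decisive point, not a proof.

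For comparison, the paper does not argue locally at a fixed endpoint pair at all. It counts homomorphic $2k$-cycles globally and proves (Lemma \ref{lemma:most general}) that the number of homomorphic $2k$-cycles containing two edges of the same colour is at most $O_k\bigl(\Delta(G)^{1/2}\hom(C_{2k-2},G)^{1/2}\hom(C_{2k},G)^{1/2}\bigr)$; the proof is a dyadic double count over the quantities $\hom_{y,z}(P_{k-1},G)$ and $\hom_{y,z}(P_{k},G)$, bounding each dyadic class in two complementary ways and optimising the crossover. The unexpected appearance of $\hom(C_{2k-2},G)$ is then removed by the spectral inequality $\hom(C_{2k-2},G)\le \hom(C_{0},G)^{1/k}\hom(C_{2k},G)^{1-1/k}$, and the same lemma applied with the relation ``$u\sim v$ iff $u=v$'' also disposes of degenerate (non-injective) homomorphic cycles, which is the other point you wave off as routine cleanup. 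Combined with Sidorenko's bound $\hom(C_{2k},G)\ge \bar{d}^{\,2k}$ on an almost-regular subgraph, the bad homomorphic cycles form a strict minority, so a rainbow $C_{2k}$ exists. If you want to complete your approach, the missing ingredient is some such global upper bound on colour-conflicting configurations; the local colour-deletion scheme is the step that needs to be replaced rather than patched.
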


They have verified their conjecture for $k\in \{2,3\}$. For general $k$, Das, Lee and Sudakov proved the following upper bound.

\begin{theorem}[Das--Lee--Sudakov \cite{DLS13}]
	For every fixed integer $k\geq 2$, $$\ex^*(n,C_{2k})=O\left(n^{1+\frac{(1+\e_k)\ln k}{k}}\right),$$ where $\e_k\rightarrow 0$ as $k\rightarrow \infty$.
\end{theorem}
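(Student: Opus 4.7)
The goal is to show that any properly edge-coloured graph $G$ on $n$ vertices with more than $C_k\,n^{1+(1+\varepsilon_k)\ln k/k}$ edges contains a rainbow $C_{2k}$. I would follow the classical Bondy--Simonovits template of counting many paths of length $k$ and extracting a pair with the same endpoints, but modified to respect the rainbow constraint; the extra logarithmic loss in the exponent should arise from the additional cost of controlling colours.

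First I would pass to a subgraph $G'\subseteq G$ with $\delta(G')\geq d:=c\,e(G)/n=\Theta(n^{(1+\varepsilon_k)\ln k/k})$ via the standard vertex-deletion argument. Using the proper edge-colouring, rainbow walks of length $k$ can be counted greedily: at any intermediate vertex of such a walk, at most $k$ colours have been used so far, and by properness these correspond to at most $k$ forbidden incident edges, so each extension admits at least $d-k$ valid choices. Hence from every vertex there are $\gtrsim (d-k)^k\sim d^k$ rainbow walks of length $k$, and provided $d\gg k^2$ almost all of these are in fact rainbow paths, since walks revisiting a vertex form a lower-order term.

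By pigeonhole on pairs of endpoints, some pair $(u,v)$ receives $\gtrsim d^k/n$ rainbow $uv$-paths of length $k$. To extract a rainbow $C_{2k}$ I need two such paths $P,Q$ which are simultaneously (a) internally vertex-disjoint and (b) colour-disjoint, i.e.\ $C(P)\cap C(Q)=\emptyset$. Condition (a) is the classical Bondy--Simonovits obstacle and can be controlled by a convexity argument showing that most pairs among the counted paths are internally disjoint (else the high-multiplicity intersection pattern itself produces a shorter even cycle or contradicts the degree assumption). The genuinely new difficulty is condition (b), and this is where I expect the bulk of the technical effort, and also the $\ln k$ factor in the exponent, to come from.

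A natural route to enforce (b) is a random colour-sampling step: independently keep each colour class with probability $p$ and work inside the surviving properly-edge-coloured subgraph. Two given rainbow paths, each of which uses $k$ colours, both survive with probability $p^{2k}$ and automatically use disjoint colour sets with probability about $e^{-\Theta(k^2 p)}$ once one conditions on a rich enough palette. Balancing $p$ of order $1/k$ (equivalently, iterating $\sim k$ rounds and summing the harmonic losses) against the rainbow-path lower bound forces the density threshold to absorb a factor of order $\ln k$, producing the exponent $1+(1+\varepsilon_k)\ln k/k$. The hardest step is precisely this trade-off---quantifying the degradation in rainbow-path counts under the colour-disjointness requirement---since everything else reduces to a rainbow-adapted Bondy--Simonovits argument combined with the greedy rainbow-extension lemma.
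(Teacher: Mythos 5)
First, a point of order: the paper does not prove this statement; it is quoted from Das, Lee and Sudakov \cite{DLS13} as background, so there is no in-paper proof to compare against and your proposal must be judged on its own merits. The skeleton you set up is the standard one (regularise to minimum degree $d$, count rainbow walks of length $k$ greedily using properness to exclude at most $k$ colours per step, pigeonhole on endpoints), and that part is fine. The gap is in the step that is supposed to carry the whole theorem, namely enforcing colour-disjointness of the two $u$--$v$ paths. Random colour-sampling cannot do this: for two \emph{fixed} rainbow paths $P$ and $Q$, whether $C(P)\cap C(Q)=\emptyset$ is a deterministic property, and keeping each colour class with probability $p$ only deletes paths --- it never turns a colour-conflicting surviving pair into a colour-disjoint one. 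Worse, the sampling is biased the wrong way: a pair sharing $j$ colours survives with probability $p^{2k-j}\geq p^{2k}$, so conflicting pairs are relatively favoured after sparsification. The claimed probability $e^{-\Theta(k^2p)}$ of ``automatic'' colour-disjointness has no referent, and consequently your sketch contains no mechanism that actually produces the exponent $(1+\e_k)\ln k/k$; the parameter $p\sim 1/k$ and the phrase ``harmonic losses'' gesture at where the $\ln k$ should come from, but they are attached to a device that does not work.

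What is really needed, and what Das, Lee and Sudakov supply, is a quantitative upper bound on the number of \emph{pairs} of rainbow $u$--$v$ paths that conflict in colour (or, in their formulation, a breadth-first count of rainbow paths in which at each of the $k$ levels one discards endpoints reachable only through overly colour-correlated paths; the multiplicative losses over the $k$ levels are what place $\ln k$ in the exponent of $n$). Your proposal explicitly defers this to ``the bulk of the technical effort,'' but that deferral is the entire problem: bounding the number of colour-conflicting pairs of paths, i.e.\ of non-rainbow homomorphic $2k$-cycles, is exactly what the key lemmas in Section 2 of the present paper accomplish, and doing it sharply is what improves the exponent from $(1+\e_k)\ln k/k$ to the optimal $1/k$. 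A secondary, fixable issue: after passing to a minimum-degree-$d$ subgraph you have no control on the maximum degree, so ``walks revisiting a vertex form a lower-order term'' is not automatic; one should instead count self-avoiding rainbow paths directly by avoiding the at most $k$ previously used vertices as well as the at most $k$ forbidden colours at each greedy step.
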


In this paper we prove Conjecture \ref{conj:rainbowc2k} by establishing the following result.

\begin{theorem} \label{thm:rainbowC2k}
	For any integer $k\geq 2$, we have
	$$\ex^*(n,C_{2k})=O(n^{1+1/k}).$$
\end{theorem}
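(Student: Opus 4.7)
The plan is to mirror the classical Bondy--Simonovits counting proof of $\ex(n,C_{2k}) = O(n^{1+1/k})$, but keeping careful track of the multiset of colors used on each walk. Suppose for contradiction $G$ is properly edge-coloured on $n$ vertices with at least $Cn^{1+1/k}$ edges and no rainbow $C_{2k}$. By the standard trick of iteratively deleting low-degree vertices, I may pass to a subgraph $G'$ in which every vertex has degree at least $d = c C n^{1/k}$ and in which the maximum degree is within a constant factor of the average. All further counting takes place in $G'$.

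Next I would count rainbow walks of length $k$ from a typical vertex $v$. Because the coloring is proper, at the $i$-th step of a walk that has so far used $i-1$ distinct colors, at least $d - (i-1)$ of the available edges from the current endpoint have a color not yet seen, so $v$ is the start of at least $(d/2)^k$ rainbow walks of length $k$ provided $d \gg k$. Summing over $v$ and distributing over the $n$ possible other endpoints, for many ordered pairs $(v,w)$ the number $N(v,w)$ of rainbow $v$--$w$ walks of length $k$ is at least $(d/2)^k/n \gg 1$. In classical Bondy--Simonovits this is enough, since any two internally vertex-disjoint such walks close up to a $C_{2k}$. In the rainbow setting one must also ensure that the two walks use disjoint color sets.

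The heart of the argument, and where I expect the real difficulty to lie, is therefore to show that from a family of $N(v,w)$ rainbow $v$--$w$ walks, one may select two that are vertex-disjoint internally \emph{and} color-disjoint. The color-disjointness is the new constraint: two rainbow walks could still share up to $k$ colors even when their edge sets are disjoint. I would try to handle this by a balanced-supersaturation/dependent-random-choice style argument. Concretely, one would assign to each rainbow walk $W$ its ``color trace'' $\chi(W) \in \binom{[\text{colors}]}{k}$ and argue, by a probabilistic deletion of colors, that if all $N(v,w)$ walks had pairwise intersecting color traces then the relevant colors would form too concentrated a structure to be compatible with proper edge-coloring (where each color class has at most $n/2$ edges). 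Equivalently, one should establish that the random process of building a rainbow walk from $v$ has enough entropy that its color trace is sufficiently spread.

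The plan then is: prove a balanced supersaturation lemma asserting that in any properly edge-coloured graph with $\gg n^{1+1/k}$ edges, the homomorphic copies of $C_{2k}$ are both numerous and spread in color type, so that after applying a union bound over non-rainbow color patterns (using the matching property of each color class) at least one $C_{2k}$ remains that is both a genuine cycle and rainbow. The main obstacle is the second feature of this lemma, the color-spread: unlike the classical setting, one must simultaneously quantify vertex spread (to avoid degenerate walks) and color spread (to avoid repeated colors), and it is the joint statement that requires a genuinely new technique beyond a direct Bondy--Simonovits-style BFS.
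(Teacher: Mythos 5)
Your proposal correctly locates the difficulty --- two rainbow $v$--$w$ walks of length $k$ may be internally vertex-disjoint and yet share a colour, so the classical Bondy--Simonovits gluing does not go through --- but it does not resolve it. The step you describe as ``prove a balanced supersaturation lemma asserting that the homomorphic copies of $C_{2k}$ are both numerous and spread in colour type'' is not an ingredient you supply; it is essentially the entire content of the paper's key lemma, and your sketch of how to obtain it (``probabilistic deletion of colours,'' ``too concentrated a structure,'' ``enough entropy that the colour trace is spread'') does not constitute an argument. In fact your own framework makes the obstruction visible: with $d = \Theta(Cn^{1/k})$ the average number of rainbow $v$--$w$ walks is only $d^k/n = \Theta(C^k)$, a constant, and a constant-sized family of $k$-sets of colours can trivially be pairwise intersecting (e.g.\ all walks through $v$ whose first edge has colour $c$), so no per-pair entropy or sunflower argument can succeed. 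One must count globally over all pairs simultaneously, and that is where the real work lies.

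For comparison, the paper's proof bounds the total number of homomorphic $2k$-cycles $(x_1,\dots,x_{2k})$ containing two edges of the same colour by
$O\bigl(k^{3/2}\Delta(G)^{1/2} n^{1/(2k)}\hom(C_{2k},G)^{1-1/(2k)}\bigr)$.
This is proved by fixing the position of the conflicting edge, performing a dyadic decomposition according to the values of $\hom_{x_1,x_{k+2}}(P_{k-1},G)$ and $\hom_{x_2,x_{k+2}}(P_k,G)$, counting each dyadic class in two different ways (one bounded via $\hom(C_{2k-2},G)$, one via $\hom(C_{2k},G)$), and optimising the threshold; the resulting bound is then converted using the spectral Cauchy--Schwarz inequality $\hom(C_{2k},G)\hom(C_{2k-4},G)\geq\hom(C_{2k-2},G)^2$. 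Combined with almost-regularisation and Sidorenko's bound $\hom(C_{2k},G)\geq \bar d^{\,2k}$, the non-rainbow and non-injective homomorphisms are a strict minority, so a rainbow $C_{2k}$ survives. None of this machinery --- the dyadic double counting, the appearance of $\hom(C_{2k-2},G)$, or the inequality between consecutive even-cycle homomorphism counts --- appears in your proposal, so as written it has a genuine gap at precisely the step you flag as requiring ``a genuinely new technique.''
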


The theta graph $\theta_{k,t}$ is the union of $t$ paths of length $k$ which share the same endpoints but are pairwise internally vertex-disjoint. Note that $\theta_{k,2}=C_{2k}$. Our proof works also for general theta graphs.

\begin{theorem} \label{thm:rainbowtheta}
	For any integers $k,t\geq 2$, we have $$\ex^*(n,\theta_{k,t})=O(n^{1+1/k}).$$
\end{theorem}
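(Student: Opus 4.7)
The plan is to show that any properly edge-coloured graph $G$ on $n$ vertices with more than $Cn^{1+1/k}$ edges, for a sufficiently large constant $C=C(k,t)$, contains a rainbow $\theta_{k,t}$. The first step is a standard regularisation: by iteratively deleting vertices of small degree I would pass to a non-empty subgraph $G'\subseteq G$ of minimum degree $\delta\geq \tfrac{C}{4}n^{1/k}$, still carrying the induced proper edge-colouring.

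The second step is a supersaturation count for rainbow paths. Since the colouring is proper, a rainbow walk of length $i$ from any vertex can be extended in at least $\delta-i$ ways (avoiding the $i$ previously used colours), and at most $i$ of these extensions return to an earlier vertex, because each such return is realised by at most one edge (only one edge of a given unused colour leaves the current vertex). Hence the number of rainbow paths of length $k$ in $G'$ is at least $\tfrac{1}{2}n(\delta-2k)^{k}\geq c(k)\,C^{k}n^{2}$, and averaging over unordered endpoint pairs produces a pair $(u,v)$ between which there are $M\geq \Omega(C^{k})$ rainbow paths of length $k$.

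The combinatorial core is to extract from these $M$ rainbow $u$--$v$ paths a subcollection of $t$ paths that are pairwise internally vertex-disjoint and whose $kt$ edges carry pairwise distinct colours; their union is the desired rainbow $\theta_{k,t}$ (with $t=2$ giving a rainbow $C_{2k}$). I would attempt this probabilistically: sample $2t$ of the $M$ paths uniformly and show that with positive probability at least $t$ of them are mutually non-conflicting. Writing $p_w$ for the number of rainbow $u$--$v$ paths through an internal vertex $w$ and $q_c$ for the number using colour $c$, the expected number of conflicting pairs among $2t$ samples is bounded by $O\bigl(t^{2}\bigl(\textstyle\sum_w p_w^{2}+\sum_c q_c^{2}\bigr)/M^{2}\bigr)$, which is $O(kt^{2}/L)$ whenever $\max_w p_w,\ \max_c q_c\leq M/L$.

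Hence the heart of the argument is a \emph{spreadness} statement: one may arrange that no internal vertex and no colour participates in more than $M/(Ckt)$ of the rainbow $u$--$v$ paths of length $k$. This is the principal obstacle, because a priori a single vertex or a single colour class might lie on almost every rainbow $u$--$v$ path. My plan is to obtain spreadness by prepending a cleaning step in the spirit of balanced supersaturation: iteratively remove edges from $G$ whose colour or whose endpoint already witnesses a disproportionately concentrated family of short rainbow walks between some pair of vertices, and show via a potential/energy argument that the total number of removals is small compared to $|E(G)|$, so that a supersaturating subgraph with uniform multiplicity bounds survives. The most delicate point is to manage vertex-based and colour-based concentration simultaneously, because removing a single edge both alters a colour class and changes the degrees of its endpoints; this is exactly the general method the introduction advertises, and once it is in place the same framework recovers Theorem~\ref{thm:rainbowC2k} and the paper's further applications.
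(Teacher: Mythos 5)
Your first two steps (minimum-degree regularisation and the greedy count showing that some pair $(u,v)$ carries $M=\Omega(C^k)$ rainbow $u$--$v$ paths of length $k$) are sound, and your sampling framework is essentially the device the paper uses in its proof of the general theta-graph result (Theorem \ref{thm:general theta}): fix a good antipodal pair, sample $t$ (or $2t$) walks between its two vertices, and union-bound over pairs of samples. The problem is that the entire difficulty of the theorem sits in the step you defer. You reduce ``conflicting pairs are rare'' to a pointwise spreadness statement ($\max_w p_w,\ \max_c q_c\leq M/(Ckt)$) and propose to obtain it by an unspecified edge-deletion/potential argument ``in the spirit of balanced supersaturation''. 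No potential function is defined, no bound on the number of removals is derived, and it is genuinely unclear that such a cleaning can succeed: deleting one edge changes the counts $p_w$ and $q_c$ for $\Theta(n^2)$ endpoint pairs at once, and concentration destroyed at one pair can simply reappear at another. Note also that pointwise spreadness is stronger than what your sampling step needs; what it needs is the second-moment bound $\sum_w p_w^2+\sum_c q_c^2=o(M^2/t^2)$ for \emph{some} pair $(u,v)$, i.e.\ a bound on the number of conflicting pairs of walks averaged over endpoint pairs.

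The paper supplies exactly this missing ingredient by a different mechanism, with no cleaning at all. Lemma \ref{lemma:most general} bounds the total number, summed over all endpoint pairs, of homomorphic $2k$-cycles containing a conflicting pair of edges (or vertices) via a dyadic double count: each bad cycle is charged either to its $(k-1)$-walk half or to its $k$-walk half according to the dyadic sizes of $\hom_{x_1,x_{k+2}}(P_{k-1},G)$ and $\hom_{x_2,x_{k+2}}(P_{k},G)$, which yields a bound of the form $O_k\bigl(s^{1/2}\Delta(G)^{1/2}\hom(C_{2k-2},G)^{1/2}\hom(C_{2k},G)^{1/2}\bigr)$; the spectral inequality $\hom(C_{2k-2},G)\leq n^{1/k}\hom(C_{2k},G)^{1-1/k}$ (Corollary \ref{cor:hom inequality}) converts this into $o(\hom(C_{2k},G))$ as soon as $\hom(C_{2k},G)=\omega(n\Delta(G)^{k})$, which almost-regularity plus Sidorenko's bound guarantees. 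Averaging over antipodal pairs then produces a pair for which fewer than a $1/t^2$ fraction of walk-pairs conflict, which is all your sampling step requires. Without this lemma, or a genuinely worked-out substitute for your cleaning step, your argument does not close.
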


\noindent This generalises a result of Faudree and Simonovits \cite{FS83} stating that $\ex(n,\theta_{k,t})=O(n^{1+1/k})$.
	
Keevash, Mubayi, Sudakov and Verstra\"ete also asked how many edges a properly edge-coloured $n$-vertex graph can have if it does not contain any rainbow cycle. They constructed such graphs with $\Omega(n\log n)$ edges. Note that this is quite different from the uncoloured case, since any $n$-vertex acyclic graph has at most $n-1$ edges. Das, Lee and Sudakov proved that if $\eta>0$ and $n$ is sufficiently large, then any properly edge-coloured $n$-vertex graph with at least $n\exp\left((\log n)^{\frac{1}{2}+\eta}\right)$ edges contains a rainbow cycle. We prove the following improvement.
	
\begin{theorem} \label{thm:acyclicrainbow}
	There exists an absolute constant $C$ such that if $n$ is sufficiently large and $G$ is a properly edge-coloured graph on $n$ vertices with at least $Cn(\log n)^4$ edges, then $G$ contains a rainbow cycle of even length.
\end{theorem}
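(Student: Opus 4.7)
The plan is to establish the contrapositive: every properly edge-coloured $n$-vertex graph with at least $Cn(\log n)^4$ edges contains a rainbow even cycle. Following the general framework announced in this paper, the target is to produce two rainbow paths of the same length between the same pair of vertices that are internally vertex-disjoint and use disjoint colour sets; their union is then a rainbow cycle of even length.

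The first step is standard degree regularisation: by iteratively removing low-degree vertices we may pass to a subgraph of minimum degree $\delta \geq c(\log n)^4$ while retaining a constant fraction of the edges. Next, for a fixed vertex $v$, we count \emph{rainbow walks} starting at $v$. Since the colouring is proper and a rainbow walk of length $\ell$ forbids only $\ell$ colours at its current endpoint, each such walk admits at least $\delta - \ell$ extensions. Hence the number of rainbow walks of length $k$ out of $v$ is at least $\prod_{i=0}^{k-1}(\delta - i) \geq (\delta/2)^k$ whenever $k \leq \delta/2$. Choosing $k$ of order $\log n / \log\log n$ makes this count much larger than $n^2$, so after summing over $v$ and pigeonholing, there is an ordered pair $(v, w)$ joined by a huge number of rainbow walks of length $k$.

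It remains to extract, from this collection, two walks which are internally vertex-disjoint paths with disjoint colour palettes. The chief difficulty is \emph{colour-disjointness}: two arbitrary rainbow walks between fixed endpoints will generically share many colours. The natural tool is a random colour-partition trick: partition the colour classes uniformly at random into two parts $A$ and $B$, so that a typical rainbow walk uses about $k/2$ colours of each type. Conditioning on walks that are ``heavy in $A$'' versus ``heavy in $B$'', and iterating this partition logarithmically many times, one obtains a pair of rainbow walks between $v$ and $w$ whose colour sets are essentially disjoint. Vertex-disjointness is then arranged by a short-cutting or pruning step, exploiting again that we started with very many walks between $(v,w)$. The polylogarithmic factor $(\log n)^4$ arises as the combined cost of these pigeonholes.

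The main obstacle is precisely this last extraction: plain endpoint-pigeonhole yields neither colour-disjointness nor internal vertex-disjointness, and one must trade off between them. The random colour-partition is what unlocks colour-disjointness, and careful quantitative bookkeeping---tracking how much is lost at each pigeonhole and each partition step---is what pins down the exponent $4$ in $(\log n)^4$.
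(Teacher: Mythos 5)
Your outline is not the paper's argument, and it contains a genuine gap at exactly the point you identify as "the chief difficulty." The walk-counting setup (regularise to minimum degree $\delta$, count rainbow walks of length $k$ from a vertex, pigeonhole on the endpoint pair) is sound as far as it goes, but it is essentially the starting point of the Das--Lee--Sudakov argument, and the reason their bound is only $n\exp((\log n)^{1/2+\eta})$ is precisely that extracting from a large family of rainbow $v$--$w$ walks two walks that are internally vertex-disjoint \emph{and} use exactly disjoint colour sets is hard. Your proposed fix does not close this. First, "essentially disjoint" colour palettes are useless: if the two paths share even a single colour, their union is not rainbow, so the random colour-partition must deliver \emph{exact} disjointness. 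The natural way to get that is to find one walk coloured entirely inside $A$ and one entirely inside $B$; the expected counts work out for $k\approx \log n/\log\log n$, but you need both events to occur for the \emph{same} partition, and this fails, for instance, if every walk in your family uses some common colour (then whichever side misses that colour has no walks at all). Nothing in your sketch rules this out, the meaning of "iterating this partition logarithmically many times" is not specified, and the vertex-disjointness "pruning step" is likewise asserted rather than argued. As written, the proof does not go through, and there is no indication of how the bookkeeping would land on $(\log n)^4$ rather than the DLS-type bound.

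The paper takes a different route that avoids the extraction problem entirely. Rather than building two colour-disjoint paths, it counts homomorphic copies of $C_{2k}$ and shows directly that the number of \emph{bad} ones (those with a repeated colour or a repeated vertex) is at most $64k^{3/2}\Delta(G)^{1/2}n^{1/(2k)}\hom(C_{2k},G)^{1-1/(2k)}$, via a dyadic double-counting over pairs $\bigl(\hom_{x_1,x_{k+2}}(P_{k-1},G),\hom_{x_2,x_{k+2}}(P_k,G)\bigr)$ combined with the spectral inequality $\hom(C_{2k-2},G)\le \hom(C_0,G)^{1/k}\hom(C_{2k},G)^{1-1/k}$. Consequently a rainbow $C_{2k}$ exists as soon as $\hom(C_{2k},G)\gg 64^{2k}k^{3k}n\Delta(G)^k$, and the remaining work (the source of the $(\log n)^4$) is to locate a bipartite subgraph that is degree-controlled on both sides well enough to certify this homomorphism count for $k=\lfloor\log n\rfloor$. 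If you want to pursue your own line, you would need a genuinely new idea for the colour-disjoint extraction step; otherwise I recommend reorienting around a global count of bad homomorphic cycles.
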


\subsection{Colour-isomorphic even cycles in proper colourings}

Conlon and Tyomkyn \cite{CT20} have initiated the study of the following problem. We say that two subgraphs of an edge-coloured graph are colour-isomorphic if there is an isomorphism between them preserving the colours. For an integer $r\geq 2$ and a graph $H$, they write $f_r(n,H)$ for the smallest number $C$ so that there is a proper edge-colouring of $K_n$ with $C$ colours containing no $r$ vertex-disjoint colour-isomorphic copies of $H$. They proved various general results about this function, such as the following upper bound.

\begin{theorem}[Conlon--Tyomkyn \cite{CT20}] \label{thm:ConlonTyomkyn}
	For any graph $H$ with $v$ vertices and $e$ edges,
	$$f_r(n,H)=O\left(\max\left(n,n^{\frac{rv-2}{(r-1)e}}\right)\right).$$
\end{theorem}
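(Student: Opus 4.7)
The plan is a probabilistic construction. Set $C=\Theta\!\left(\max\{n,\, n^{(rv-2)/((r-1)e)}\}\right)$; since a proper edge-colouring of $K_n$ needs at least $n-1$ colours one may assume $C\ge n$. The goal is to exhibit a proper $C$-edge-colouring of $K_n$ containing no $r$ pairwise vertex-disjoint colour-isomorphic copies of $H$, by the first moment method.

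Generate the random colouring in two layers: fix a $1$-factorisation $\{M_1,\dots,M_N\}$ of $K_n$ with $N\le n$ (working inside $K_{n+1}$ and discarding a vertex if $n$ is odd), and independently for each $i$ partition $M_i$ uniformly at random into $t=\lceil C/N\rceil$ blocks, with each block receiving a distinct new colour. The resulting colouring is proper and has a product-independence structure. Let $X$ count ordered $r$-tuples $(H_1,\dots,H_r)$ of pairwise vertex-disjoint labeled copies of $H$ in $K_n$ that are colour-isomorphic via the given labelings. A necessary deterministic condition is \emph{matching-compatibility}: for every edge-slot $j\in[e]$, the $r$ corresponding edges of $H_1,\dots,H_r$ lie in a common matching of the factorisation. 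Given matching-compatibility, independence of the block-partitions yields conditional colour-isomorphism probability $t^{-(r-1)e}$.

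The core of the argument is an upper bound on the number of matching-compatible ordered $r$-tuples. Here one uses that a matching touches every vertex at most once, so prescribing the matching of every edge of $H$ forces each $H_i$ (for $i\ge 2$) from a root vertex by propagation through the prescribed matchings. Anchoring the colour-isomorphism on a small "core" of $H_1$ and averaging over a uniformly random $1$-factorisation to decorrelate matching-indices across disjoint copies should yield a matching-compatible count of order $O\!\left(n^{rv-2}(C/n)^{(r-1)e}\right)$. Substituting, $\mathbb{E}[X]=O\!\left(n^{rv-2}/C^{(r-1)e}\right)=O(1)$ for the chosen $C$, and a standard first-moment / deletion argument then produces the desired colouring. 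The "$n$" summand in the maximum simply reflects the fact that when $n^{(rv-2)/((r-1)e)}<n$, the conclusion already follows from the $1$-factorisation alone (which uses only $n-1$ colours and is automatically free of any two vertex-disjoint copies that agree edge-by-edge on matchings).

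The main obstacle is the matching-compatibility count with the sharp $n^{rv-2}$ exponent. A naive bound loses a factor of roughly $n^2$ and would only give $C\ge n^{rv/((r-1)e)}$, which is strictly weaker than the claim whenever $(r-1)e>0$. The missing $n^{-2}$ saving should come from anchoring the colour-isomorphism at a single edge of $H_1$ (pinning two vertices of each subsequent $H_i$) together with a Kővári--Sós--Turán or supersaturation-style estimate on the auxiliary bipartite graph encoding how matchings of the factorisation cut vertex-subsets of $V(K_n)$. I expect this extremal count to be the only truly non-routine step.
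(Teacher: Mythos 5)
This statement is quoted from Conlon--Tyomkyn \cite{CT20}; the present paper gives no proof of it, so there is nothing in-paper to compare against. Judged on its own terms, your proposal has a genuine gap at exactly the point you flag as ``the only truly non-routine step'', and that step is in fact the entire content of the theorem. With your two-layer colouring, the expected number of bad configurations factorises as $\mathbb{E}[X]=N_{\mathrm{mc}}\cdot t^{-(r-1)e}$ with $t\approx C/n$, so you need $N_{\mathrm{mc}}=O\bigl(n^{rv-2-(r-1)e}\bigr)$ (note that your displayed count $O\bigl(n^{rv-2}(C/n)^{(r-1)e}\bigr)$, multiplied by $t^{-(r-1)e}$, gives $n^{rv-2}$, not $n^{rv-2}/C^{(r-1)e}$, so the arithmetic is already inconsistent). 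But for a uniformly random $1$-factorisation the probability that a fixed $r$-tuple of vertex-disjoint copies is matching-compatible is $\Theta\bigl(n^{-(r-1)e}\bigr)$, so $\mathbb{E}[N_{\mathrm{mc}}]=\Theta\bigl(n^{rv-(r-1)e}\bigr)$ --- a factor $n^{2}$ above what you need. Averaging over the $1$-factorisation therefore provably does not produce the required count; it reproduces exactly the naive bound $C\gtrsim n^{rv/((r-1)e)}$. The appeal to ``anchoring at an edge plus a K\H{o}v\'ari--S\'os--Tur\'an estimate'' does not identify a mechanism that beats this expectation, and no first-moment computation over a product measure can, since the $n^{-2}$ saving is not a reduction in the number of configurations. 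A further error: a bare $1$-factorisation is \emph{not} automatically free of two vertex-disjoint colour-isomorphic copies (any two disjoint edges of one matching are already colour-isomorphic copies of $K_2$, and for, say, $H=C_4$ a typical $1$-factorisation contains $\Theta(n^4)$ colour-isomorphic disjoint pairs), so your treatment of the regime where the maximum equals $n$ also fails.

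For reference, the actual proof in \cite{CT20} runs along entirely different lines: colour each edge of $K_n$ independently and uniformly with one of $C$ colours and apply the Lov\'asz Local Lemma, taking as bad events both the properness violations (two adjacent edges of equal colour, probability $1/C$, which is where the $\max(n,\cdot)$ term originates) and the events that a given $r$-tuple of vertex-disjoint copies is colour-isomorphic (probability $O\bigl(C^{-(r-1)e}\bigr)$). The exponent $rv-2$ then appears not as a count of configurations but as the \emph{dependency degree}: the number of $r$-tuples meeting a fixed edge is $O\bigl(n^{rv-2}\bigr)$, and the LLL condition $C^{-(r-1)e}\, n^{rv-2}=O(1)$ gives the theorem. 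If you want to salvage a first-moment argument you would have to exhibit a specific proper base colouring with $O(n)$ colours for which the number of colour-isomorphic vertex-disjoint $r$-tuples is $O\bigl(n^{rv-2-(r-1)e}\bigr)$, which is a strong pseudorandomness property that you have not established and which a random $1$-factorisation does not have.
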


Regarding even cycles, they established the following result.

\begin{theorem}[Conlon--Tyomkyn \cite{CT20}]
	$f_2(n,C_6)=\Omega(n^{4/3})$.
\end{theorem}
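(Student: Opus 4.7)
My plan is a double-counting argument exploiting the matching structure forced by a proper edge-colouring. Assume $K_n$ is properly edge-coloured with $C$ colours and contains no two vertex-disjoint colour-isomorphic copies of $C_6$; I want to prove $C=\Omega(n^{4/3})$.

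The first step is a structural observation. Each colour class is a matching, so once a cyclic colour sequence $(c_1,\dots,c_6)$ and a starting vertex $v_1$ are fixed, the entire $C_6$ is determined: $v_{i+1}$ is the matching partner of $v_i$ in $M_{c_i}$. Taking the sequence modulo the dihedral action of order $12$ gives the \emph{colour profile} of the cycle, and the above shows that each profile is realised by at most $O(n)$ copies of $C_6$. Since $K_n$ contains $\binom{n}{6}\cdot 60=\Theta(n^{6})$ copies of $C_6$ in total, the number of realised profiles is $\Omega(n^{5})$; as there are at most $O(C^{6})$ possible profiles, this yields the naive bound $C=\Omega(n^{5/6})$, which falls short of the target $n^{4/3}$.

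To close the gap, I would refine the count by encoding each $C_6$ as an unordered pair of internally disjoint $3$-paths joining common endpoints $(u,v)$; by the matching argument, each such $3$-path is uniquely labelled by its ordered colour triple. The hypothesis then says that for each dihedral equivalence class of pairs of colour triples, the endpoint pairs $(u,v)$ where the pair is realised correspond to copies of $C_6$ that pairwise share at least one vertex. I would interpret this as a forbidden bipartite configuration between endpoint pairs and pairs of colour triples, then apply a K\H{o}v\'ari--S\'os--Tur\'an or Cauchy--Schwarz argument in conjunction with the Tur\'an bound $\ex(n,C_6)=O(n^{4/3})$ on an auxiliary graph built from these incidences, hoping to extract the sharp exponent.

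The main obstacle is converting the no-disjoint-pair hypothesis into a clean forbidden substructure. One has to control pairs of $C_6$'s that share only one or two vertices (forbidden by the hypothesis yet almost disjoint), and carefully account for the twelvefold symmetry of cyclic colour sequences. A random sampling or deletion step to isolate a large, pairwise well-separated subfamily of $C_6$'s before invoking a K\H{o}v\'ari--S\'os--Tur\'an type bound seems to me the most natural route to the exponent $4/3$; making this sampling lossless enough to preserve the sharp bound is, I expect, where most of the technical work lies.
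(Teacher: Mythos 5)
There is a genuine gap. Your first counting step is arithmetically fine but proves nothing: the bound ``each colour profile is realised by $O(n)$ copies'' uses only the properness of the colouring, not the hypothesis that there are no two disjoint colour-isomorphic copies, and the resulting $C=\Omega(n^{5/6})$ is actually weaker than the trivial bound $C\geq n-1$ that holds for every proper edge-colouring of $K_n$. Your second step, where all the content would have to live, is a plan rather than an argument --- you do not specify the auxiliary graph, how K\H{o}v\'ari--S\'os--Tur\'an or $\ex(n,C_6)=O(n^{4/3})$ would enter, or how the sampling/deletion step would work, and you say so yourself (``hoping to extract the sharp exponent'').

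The missing idea is to apply the even-cycle Tur\'an bound not to $K_n$ but to an auxiliary graph $\cG$ on the $N\approx n^2$ \emph{ordered pairs} of vertices of $K_n$, where two disjoint pairs are adjacent if they span a monochromatic $2$-matching (Definition \ref{def:patternaux} with $r=2$). A convexity count of monochromatic $2$-matchings shows that $C=o(n^{4/3})$ colours force $e(\cG)=\omega(n^{8/3})=\omega(N^{1+1/3})$ (Lemma \ref{lemma:countmatchings}); this is where the exponent $4/3$ really comes from, via $n^4/C\geq (n^2)^{4/3}$. A $6$-cycle in $\cG$ whose six vertices are pairwise disjoint pairs yields two vertex-disjoint colour-isomorphic copies of $C_6$, and the genuinely hard part --- which your ``well-separated subfamily'' remark gestures at but does not solve --- is showing that among the many homomorphic $6$-cycles in $\cG$ guaranteed by the edge count, at least one avoids all intersecting pairs of vertices. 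The present paper handles exactly this via Theorem \ref{thm:general 2kcycle} (built on the homomorphism-counting Lemma \ref{lemma:simple with vertices}, Sidorenko's bound and a regularisation step), and the statement you were asked to prove is the case $r=2$, $k=3$ of Theorem \ref{thm:repeatedcycles}; the paper itself only quotes it from \cite{CT20} and reproves it in this more general form.
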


One of the several open problems they posed is the following question.

\begin{question}[Conlon--Tyomkyn \cite{CT20}]
	Is it true that for every $\e>0$, there exists $k_0=k_0(\e)$ such that, for all $k\geq k_0$, $f_2(n,C_{2k})=\Omega(n^{2-\e})$?
\end{question}

Later, Xu, Zhang, Jing and Ge made a more precise conjecture.

\begin{conjecture}[Xu--Zhang--Jing--Ge \cite{XZJG20}]
	For any $k\geq 3$,
	$f_2(n,C_{2k})=\Omega(n^{2-\frac{2}{k}})$.
\end{conjecture}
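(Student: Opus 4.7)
The plan is to reduce the statement to the rainbow Tur\'an bound for $C_{2k}$ (Theorem \ref{thm:rainbowC2k}) applied to a suitable auxiliary ``pair-graph''. Given a proper edge-colouring $\chi\colon E(K_n)\to[C]$ with $C=o(n^{2-2/k})$, I construct the auxiliary graph $H$ with vertex set $V(H):=\{(u,v)\in V(K_n)^2 : u\neq v\}$, declaring $(u_1,u_2)\sim_H (v_1,v_2)$ precisely when $u_1 v_1$ and $u_2 v_2$ are two \emph{distinct} edges of $K_n$ with $\chi(u_1v_1)=\chi(u_2v_2)$, and I assign to this $H$-edge the common $\chi$-colour. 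Since each $\chi$-colour class forms a matching in $K_n$, the induced edge-colouring of $H$ is proper (any fixed colour yields at most one neighbour of any $H$-vertex), and because distinct edges of a matching are vertex-disjoint, every $H$-edge automatically involves four pairwise distinct $K_n$-vertices.

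Writing $|M_c|$ for the size of the colour class of colour $c$, the total edge count of $H$ is $\sum_{c=1}^{C}|M_c|(|M_c|-1)$. By Cauchy--Schwarz,
\begin{equation*}
|E(H)|\;\geq\;\frac{\binom{n}{2}^{2}}{C}-\binom{n}{2}\;=\;\omega\!\left(n^{2+2/k}\right),
\end{equation*}
where the last equality uses $C=o(n^{2-2/k})$. Since $|V(H)|=n(n-1)=\Theta(n^{2})$, Theorem \ref{thm:rainbowC2k} provides $\ex^{*}(|V(H)|,C_{2k})=O(|V(H)|^{1+1/k})=O(n^{2+2/k})$, which is strictly beaten by $|E(H)|$, so $H$ must contain a rainbow copy of $C_{2k}$: $2k$ distinct pairs $(u_0,v_0),\dots,(u_{2k-1},v_{2k-1})$ together with $2k$ distinct colours $c_0,\dots,c_{2k-1}$ satisfying $\chi(u_iu_{i+1})=\chi(v_iv_{i+1})=c_i$ for every $i$ modulo $2k$. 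This immediately yields two colour-isomorphic closed rainbow walks $u_0 u_1\cdots u_{2k-1}u_0$ and $v_0 v_1\cdots v_{2k-1}v_0$ in $K_n$ using the same cyclic colour pattern.

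The main obstacle is to guarantee that these two closed walks are actually simple cycles and that they are \emph{vertex-disjoint} in $K_n$: the $H$-distinctness of the pairs $(u_i,v_i)$ combined with the matching structure only controls local coincidences (the four $K_n$-vertices attached to each $H$-edge are distinct), and does not preclude global coincidences of the form $u_i=u_j$, $v_i=v_j$, or $u_i=v_j$ for non-consecutive indices. I plan to handle this by strengthening the appeal to Theorem \ref{thm:rainbowC2k} with a supersaturation count: since $|E(H)|/|V(H)|^{1+1/k}\to\infty$, the number of rainbow $C_{2k}$'s in $H$ grows correspondingly, and one can bound above the number of \emph{degenerate} rainbow $C_{2k}$'s---those exhibiting one of the three forbidden coincidences---by exploiting that each colour class is a matching (so once the colour sequence and one vertex of a walk are fixed, the whole walk is determined). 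Equivalently, one can pre-emptively delete a sparse family of $H$-edges that could force any such coincidence and then verify that the remaining edge count still exceeds the rainbow Tur\'an threshold. Pushing this bookkeeping through precisely at the critical density $C=o(n^{2-2/k})$ is where I expect the technical difficulty to lie.
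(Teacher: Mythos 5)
Your auxiliary graph $H$ is, for $r=2$, exactly the paper's auxiliary graph $\cG$ from Definition \ref{def:patternaux}, and your Cauchy--Schwarz edge count reproduces Lemma \ref{lemma:countmatchings}: so the setup is right. The gap is in the step you yourself flag as the ``main obstacle'', and it is not a technicality --- it is the entire content of the theorem. A rainbow $C_{2k}$ in $H$ does not give two vertex-disjoint copies of $C_{2k}$ in $K_n$: distinctness of the $2k$ colours $c_0,\dots,c_{2k-1}$ is perfectly compatible with, say, $u_i=u_j$ for non-adjacent $i,j$ (the closed walk then visits a vertex twice through four edges of four different colours), and likewise with $u_i=v_j$. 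So rainbowness in $H$ is neither necessary nor sufficient for what you need, and Theorem \ref{thm:rainbowC2k}, being a pure existence statement, gives you no supersaturation to subtract the degenerate configurations from. Your two proposed repairs do not close this: you cannot ``pre-emptively delete a sparse family of $H$-edges'' because a coincidence $u_i=u_j$ is a property of a \emph{pair} of cycle vertices at distance $\geq 2$ along the cycle, not of any single edge of $H$; and bounding degenerate rainbow cycles ``by exploiting that each colour class is a matching'' is precisely the hard counting problem, which naive arguments (fixing a colour sequence and a starting vertex) do not solve at the critical density.

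What the paper actually does is bypass rainbowness entirely. It defines the relation $x\sim y$ on $V(\cG)$ by ``the tuples $x$ and $y$ intersect'', verifies via Lemma \ref{lemma:fewintersect} that every vertex has at most $s=r^2$ neighbours related to any given vertex, and then invokes Theorem \ref{thm:general 2kcycle}. That theorem in turn rests on the key counting lemma (Lemma \ref{lemma:simple with vertices}): after passing to an almost-regular subgraph (Lemma \ref{lemmaJS}) and lower-bounding $\hom(C_{2k},\cG')$ by Sidorenko's inequality (Lemma \ref{lemma:sidorenko}), the number of homomorphic $2k$-cycles containing a conflicting pair of vertices is at most $O\bigl(k^{3/2}s^{1/2}\Delta^{1/2}m^{1/(2k)}\hom(C_{2k},\cG')^{1-1/(2k)}\bigr)$, which is $o(\hom(C_{2k},\cG'))$ in this regime; the surviving homomorphic cycles have pairwise disjoint tuples and yield the two disjoint colour-isomorphic copies. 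The proof of that counting lemma (the dyadic decomposition over $\hom_{y,z}(P_{k-1})$ and $\hom_{y,z}(P_k)$ in Lemma \ref{lemma:most general}) is the missing ingredient in your argument; without it, or something of comparable strength, the proposal does not yield the conjecture.
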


We prove this conjecture in a more general form.

\begin{theorem} \label{thm:repeatedcycles}
	Let $k,r\geq 2$ be fixed integers. Then $f_r(n,C_{2k})=\Omega\left(n^{\frac{r}{r-1}\cdot \frac{k-1}{k}}\right)$.
\end{theorem}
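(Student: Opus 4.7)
The plan is a double-counting over the colour patterns of $C_{2k}$'s combined with a packing step that exploits the matching structure of proper edge-colourings. For each cyclic colour pattern $\vec{c}$ of length $2k$, let $f(\vec{c})$ denote the number of copies of $C_{2k}$ in $K_n$ with that pattern, so that $\sum_{\vec{c}} f(\vec{c}) = \Theta(n^{2k})$. Since the number of colour patterns is at most $N^{2k}$, Jensen's inequality applied to $x \mapsto x^r$ gives
$$\sum_{\vec{c}} f(\vec{c})^r \;\ge\; \frac{\bigl(\sum_{\vec{c}} f(\vec{c})\bigr)^r}{N^{2k(r-1)}} \;=\; \Omega\!\left(\frac{n^{2kr}}{N^{2k(r-1)}}\right),$$
which is a lower bound on the number of ordered $r$-tuples of $C_{2k}$'s with a common colour pattern.

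Next, one upper-bounds the number of ``bad'' such tuples, those in which some two cycles share a vertex. The key structural fact, identical to the one underlying Theorem~\ref{thm:rainbowC2k}, is that since each colour class forms a matching, a rainbow colour sequence $(c_1, \ldots, c_{2k})$ together with a starting vertex $v$ determines a closed walk from $v$ uniquely: at each step the next vertex is the unique $c_i$-neighbour of the current one. Consequently, for any rainbow pattern at most $2k$ copies of $C_{2k}$ pass through any fixed vertex, and at most $n$ copies exist in total. Inserting these bounds into the count of bad $r$-tuples yields an upper bound polynomial in $n$; whenever the lower bound of the previous paragraph exceeds this, a vertex-disjoint $r$-tuple must exist, giving $r$ pairwise vertex-disjoint colour-isomorphic copies of $C_{2k}$.

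The main obstacle is obtaining the sharp exponent $\alpha = \tfrac{r(k-1)}{(r-1)k}$: balancing the two bounds directly yields only $N < cn$, which matches $n^\alpha$ in the boundary case $r = k$ but falls short for $r < k$, where the theorem's threshold exceeds $n$. Closing this gap requires replacing the pigeonhole over full $C_{2k}$'s by a sharper count over length-$k$ rainbow paths sharing fixed endpoints --- the same intermediate structure used in the proof of Theorem~\ref{thm:rainbowC2k} --- and combining it with a H\"older-type inequality to translate the refined path count into a stronger bound on colour-isomorphic cycle tuples. Carrying out this refinement so that the exponents balance precisely to $\alpha$ is the technical heart of the argument.
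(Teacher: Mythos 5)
There is a genuine gap: the portion of the argument you actually carry out yields, as you yourself compute, only the trivial threshold $N=O(n)$, and the step you label ``the technical heart of the argument'' --- the refinement needed to reach the exponent $\frac{r}{r-1}\cdot\frac{k-1}{k}$ --- is not carried out but only gestured at. Replacing the pigeonhole over whole cycles by ``a sharper count over length-$k$ rainbow paths sharing fixed endpoints'' combined with ``a H\"older-type inequality'' is not a checkable sketch; it is precisely where all the difficulty lies. The obstruction to your Jensen bound is structural: since each colour class is a matching, a rainbow pattern admits at most $O(n)$ copies of $C_{2k}$, so $\sum_{\vec c}f(\vec c)^r\le O(n^{r-1})\sum_{\vec c}f(\vec c)$ and the pattern-counting argument can never see the $1/k$ saving in the exponent. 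That saving has to come from even-cycle Tur\'an machinery (Sidorenko's bound $\hom(C_{2k},\cG)\ge\bar d^{\,2k}$ together with a count of degenerate homomorphic cycles), which your outline never invokes.

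The paper proceeds quite differently and sidesteps this entirely. It builds an auxiliary graph $\cG$ whose vertices are the $r$-tuples of distinct vertices of $K_n$, two tuples being adjacent when they are disjoint and the $r$ connecting edges form a monochromatic matching; a $C_{2k}$ in $\cG$ with pairwise disjoint vertices is exactly an $r$-tuple of colour-isomorphic, vertex-disjoint copies of $C_{2k}$ in $K_n$. A convexity count of monochromatic $r$-matchings (Lemma \ref{lemma:countmatchings}) shows that with $o(n^{\frac{r}{r-1}\cdot\frac{k-1}{k}})$ colours one has $e(\cG)=\omega(N^{1+1/k})$ where $N=|V(\cG)|\approx n^r$ --- this is the point at which the claimed exponent is generated --- and then Theorem \ref{thm:general 2kcycle}, applied with the relation ``$x\sim y$ iff the tuples intersect'' (which satisfies its hypothesis with $s=r^2$ by Lemma \ref{lemma:fewintersect}), produces the required disjoint $2k$-cycle in $\cG$. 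If you wish to repair your argument, the cleanest route is to adopt this auxiliary-graph reduction rather than to try to sharpen the pattern-counting computation inside $K_n$.
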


\subsection{Tur\'an number of blow-ups of cycles}

For a graph $F$, the $r$-blowup of $F$ is the graph obtained by replacing each vertex of $F$ with an independent set of size $r$ and each edge of $F$ by a $K_{r,r}$. We write $F[r]$ for this graph. The systematic study of the Tur\'an number of blow-ups was initiated by Grzesik, Janzer and Nagy \cite{GJN19}. They proved that for any tree $T$ we have $\ex(n,T[r])=O(n^{2-1/r})$. They have also made the following general conjecture.

\begin{conjecture}[Grzesik--Janzer--Nagy \cite{GJN19}] \label{con:turanblowup}
	Let $r$ be a positive integer and let $F$ be a graph such that $\ex(n,F)=O(n^{2-\alpha})$ for some $0\leq \alpha\leq 1$ constant. Then
	$$\ex(n,F[r])=O(n^{2-\frac{\alpha}{r}}).$$
\end{conjecture}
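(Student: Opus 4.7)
My plan is to reduce the problem of finding $F[r]$ in $G$ to that of finding $F$ in a carefully constructed auxiliary graph on $r$-tuples. Define $H$ to have vertex set $V(G)^r$, with an edge between $A=(a_1,\dots,a_r)$ and $B=(b_1,\dots,b_r)$ precisely when $a_ib_j\in E(G)$ for all $i,j\in[r]$. Any subgraph copy of $F$ in $H$, provided the $m=|V(F)|$ tuples realising it are pairwise vertex-disjoint in $V(G)$ and each tuple consists of $r$ distinct vertices, is exactly a copy of $F[r]$ in $G$.

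After a standard regularisation to an almost-regular graph with average degree $d$, two successive applications of Jensen's inequality (first to $\sum_A |N_G(A)|^r$ and then to $\sum_v d_v^r$, combined with the identity $\sum_A |N_G(A)| = \sum_v d_v^r$) yield
\[
e(H) \;\gtrsim\; d^{r^2}/n^{r^2-2r}.
\]
Since $H$ has $n^r$ vertices, the hypothesis $\ex(N,F)=O(N^{2-\alpha})$ forces a copy of $F$ in $H$ as soon as $d^{r^2}/n^{r^2-2r}\gg n^{r(2-\alpha)}$, which rearranges to $d\gg n^{1-\alpha/r}$---exactly the edge count $e(G)\gg n^{2-\alpha/r}$ predicted by the conjecture.

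The principal obstacle is converting this embedded $F$ in $H$ into a genuine copy of $F[r]$ in $G$: the $m$ tuples indexed by vertices of $F$ are only guaranteed to be distinct as elements of $V(G)^r$, whereas we need them to be pairwise vertex-disjoint in $V(G)$ and internally injective. To address this, I would invoke the Tur\'an hypothesis in its supersaturated form to extract many copies of $F$ in $H$ rather than a single one, and then use a union-bound/deletion argument to discard those copies whose tuples overlap or are degenerate. This step is clean when $F$ admits a Sidorenko-type lower bound on its homomorphism count (covering trees and even cycles, and conjecturally all bipartite graphs), but the pairwise-overlap analysis introduces genuine losses: the paper's bound $O(n^{2-1/r+1/(k+r-1)+o(1)})$ for $F=C_{2k}$ falls short of the conjectural $O(n^{2-(k-1)/(kr)})$ precisely because of such losses. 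Closing this gap, and handling arbitrary $F$ with no Sidorenko-type bound available, seems to require a more global counting argument that controls the joint distribution of overlaps across all $\binom{m}{2}$ pairs simultaneously rather than one pair at a time; this is the central difficulty I would expect in pushing the method to the full conjecture.
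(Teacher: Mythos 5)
This statement is a \emph{conjecture} (due to Grzesik, Janzer and Nagy) that the paper itself does not prove; the paper only records it, notes it is known for trees and for $F=K_{s,t}$, and then establishes the strictly weaker partial results of Theorems \ref{thm:turanblowup} and \ref{thm:turanblowup2k}. So there is no proof in the paper to compare yours against, and your proposal, as you yourself acknowledge, is not a proof either: it is an outline with an explicitly flagged gap at the decisive step.

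That said, your outline accurately reconstructs the method the paper uses for its partial results. Your auxiliary graph $H$ on $r$-tuples is the paper's $\cG_0$ (Definition \ref{def:blowupaux}, stated with $r$-sets rather than tuples), your Jensen computation $e(H)\gtrsim d^{r^2}/n^{r^2-2r}=e(G)^{r^2}/n^{2r^2-2r}$ is exactly the Erd\H os--Simonovits supersaturation bound (Lemma \ref{lemma:supersaturation}), and your arithmetic correctly shows that a copy of $F$ in $H$ appears once $e(G)\gg n^{2-\alpha/r}$. Your diagnosis of the obstruction is also the right one: a copy of $F$ in $H$ only yields $F[r]$ in $G$ if the tuples are pairwise disjoint, and controlling the non-disjoint copies is where the method loses. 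The paper handles this for $F=C_{2k}$ via Lemma \ref{lemma:bipartite with vertices} (counting homomorphic cycles with conflicting vertices), which is considerably more delicate than a union-bound deletion argument but still only yields exponent $2-\frac{1}{r}+\frac{1}{k+r-1}$ rather than the conjectured $2-\frac{1}{r}+\frac{1}{kr}$; for general $F$ no analogue of Lemma \ref{lemma:sidorenko} or of the cycle-counting machinery is available. In short: your proposal correctly identifies the approach and the bottleneck, but the conjecture remains open, and nothing in your sketch (or in the paper) closes the gap you name.
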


Their result mentioned above proves this conjecture when $F$ is a tree. It is easy to see that the conjecture holds also when $F=K_{s,t}$ and $\alpha=1/s$.

In the case of forbidding all $r$-blowups of cycles, an earlier question was formulated by Jiang and Newman \cite{JN17}. To state this question, we write $\mathcal{C}[r]=\{C_{2k}[r]: k\geq 2\}$.

\begin{question}[Jiang--Newman \cite{JN17}]
	Is it true that for any positive integer $r$ and any $\e>0$, $\ex(n,\mathcal{C}[r])=O(n^{2-\frac{1}{r}+\e})$?
\end{question}

We answer this question affirmatively in a stronger form.

\begin{theorem} \label{thm:turanblowup}
	For any positive integer $r$,
	$$\ex(n,\mathcal{C}[r])=O(n^{2-1/r}(\log n)^{7/r}).$$
\end{theorem}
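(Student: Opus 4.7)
Proof plan. My plan is to derive Theorem \ref{thm:turanblowup} from a more refined bound on the Tur\'an number of a \emph{single} $r$-blowup of an even cycle, followed by an optimization in $k$. The refined bound, which is stated in the abstract, is
\[
\ex(n, C_{2k}[r]) = O\!\left(n^{2 - \tfrac{1}{r} + \tfrac{1}{k+r-1}} (\log n)^{c(k,r)}\right),
\]
where ideally the polylog exponent $c(k,r)$ is bounded by a constant depending only on $r$ (or grows at most very slowly in $k$). To deduce the family bound in Theorem \ref{thm:turanblowup}, I would set $k := \lceil \log n \rceil$, which makes $n^{1/(k+r-1)} \leq n^{1/\log n} = e = O(1)$, and leaves a polylog factor that, after careful tracking of the constants, should yield $(\log n)^{7/r}$.

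For the refined single-blowup bound, I would apply the paper's main cycle-finding method. After a standard cleaning step to obtain a nearly regular bipartite subgraph of $G$ with suitable minimum degree, I would assign to each edge $uv$ a canonical $K_{r,r}$-extension: a pair of $(r-1)$-sets $T_u \subseteq N(v)\setminus\{u\}$ and $T_v \subseteq N(u)\setminus\{v\}$ such that $\{u\}\cup T_v$ and $\{v\}\cup T_u$ span a $K_{r,r}$ in $G$. Such extensions exist in abundance by a K\H{o}v\'ari--S\'os--Tur\'an-type supersaturation count at the claimed density. With edges of $G$ thus labelled by extensions, the cycle-finding method is invoked to produce a cycle of length $2k$ whose edge-labels glue together into a $C_{2k}[r]$ subgraph. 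The improved factor $\tfrac{1}{k+r-1}$ (rather than the naive $\tfrac{1}{k}$) should reflect the extra $r-1$ ``slots'' provided by each $K_{r,r}$-extension along the cycle, which effectively lengthen the constraint chain used in the cycle-finding step.

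The main obstacle is ensuring that the $2k$ $K_{r,r}$-extensions along the final cycle are pairwise vertex-disjoint: the cycle-finding method delivers only local compatibility (each edge extends to a $K_{r,r}$), while globally the various $r$-sets might collide. To enforce global disjointness, I would introduce a randomization step---randomly partitioning $V(G)$ into $2k$ colour classes and restricting attention to canonical extensions that respect the partition, so that disjointness becomes automatic at a controlled probabilistic cost. The entropic cost of this randomization is what contributes the polylog factor $(\log n)^{c(k,r)}$, and the bulk of the technical work lies in showing that this cost stays bounded (or at worst slowly growing in $k$), so that the choice $k = \lceil \log n \rceil$ produces the final factor $(\log n)^{7/r}$. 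Once the refined single-blowup bound is in place with this quantitative control, the optimization in $k$ completes the proof of Theorem \ref{thm:turanblowup}.
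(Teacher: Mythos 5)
Your high-level plan (reduce to finding a blown-up $2k$-cycle with $k\approx\log n$, so that the $n^{1/(k+r-1)}$ term becomes $O(1)$ and only polylogarithmic losses remain) is indeed the right shape, and it is what the paper does, but there are two genuine gaps in how you propose to execute it. The first and most serious is the gluing mechanism. Assigning to each edge $uv$ of $G$ a canonical $K_{r,r}$-extension and then finding a $2k$-cycle $u_1u_2\dots u_{2k}$ in $G$ does not produce a $C_{2k}[r]$: the $r$-set placed at $u_{i+1}$ by the extension of the edge $u_iu_{i+1}$ must \emph{coincide} with the $r$-set placed at $u_{i+1}$ by the extension of $u_{i+1}u_{i+2}$, and that single $r$-set must be completely joined to both neighbouring $r$-sets. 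A per-edge canonical choice gives no such consistency, and your randomization step (partitioning $V(G)$ into $2k$ classes) only addresses disjointness of blobs at \emph{different} cycle positions, not this coincidence at each position. The paper sidesteps the issue entirely by building an auxiliary graph whose \emph{vertices} are $r$-subsets of $V(G)$ and whose edges are the pairs of disjoint $r$-sets spanning a $K_{r,r}$; a $2k$-cycle in that graph whose vertices are pairwise disjoint sets is a $C_{2k}[r]$ by construction, and the only remaining task is to rule out intersecting vertices, which is exactly the kind of conflict relation the key lemma handles (the number of neighbours of an $r$-set meeting a fixed $r$-set is at most $r^{r+1}d^{1-1/r}$, much smaller than the degree $d$).

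The second gap is the deduction itself. You cannot cite the single-blowup bound $\ex(n,C_{2k}[r])=O(n^{2-1/r+1/(k+r-1)}(\log n)^{c})$ and substitute $k=\lceil\log n\rceil$, because the implied constant there depends on $k$ (through the supersaturation constant, the $32k^{3/2}$ factor in the key lemma, the almost-regularization, etc.); making all of these uniform in $k$ \emph{is} the content of the theorem, and the paper therefore runs the argument directly with $k=\lfloor\log n\rfloor$, tracking the $k^{3k}$-type factors explicitly (this is where $7/r=(4+3)/r$ comes from: a $(\log n)^4$ loss from biregularization of the auxiliary graph and a $k^3=(\log n)^3$ loss from the key lemma). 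Relatedly, your claim that the entropic cost of restricting to a random $2k$-part partition is polylogarithmic needs care: at the level of counting copies the loss is $(2k)^{\Theta(kr)}=n^{\Theta(\log\log n)}$ when $k=\Theta(\log n)$, and one must check that after taking $2k$-th roots this is absorbed into the polylog factor rather than asserting it. None of this randomization is needed in the paper's route, since disjointness is enforced deterministically by the conflict-counting lemma.
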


Binomial random graphs show that $\ex(n,\mathcal{C}[r])=\Omega(n^{2-1/r})$. It would be interesting to decide whether the logarithmic factor in Theorem \ref{thm:turanblowup} can be removed.

Finally, we establish an upper bound for the Tur\'an number when only one blownup cycle is forbidden.

\begin{theorem} \label{thm:turanblowup2k}
	For any integers $r\geq 1$ and $k\geq 2$, we have
	$$\ex(n,C_{2k}[r])=O\left(n^{2-\frac{1}{r}+\frac{1}{k+r-1}}(\log n)^{\frac{4k}{r(k+r-1)}}\right).$$
\end{theorem}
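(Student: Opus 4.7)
The plan is to extend the cycle-finding machinery that underlies the paper's proof of Theorem \ref{thm:rainbowC2k}, now applied to $r$-subsets of $V(G)$ instead of single vertices, so that a cycle in an auxiliary graph becomes a blow-up in $G$.

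An Erd\H{o}s--Gallai type pruning reduces us to a bipartite subgraph of $G$ on parts $X,Y$ of size $\Theta(n)$ in which every degree is of order $d:=n^{1-1/r+1/(k+r-1)}(\log n)^{4k/(r(k+r-1))}$. Writing $s:=k+r-1$, a convexity computation gives that the expected common neighbourhood of a uniformly random $r$-subset of $X$ has size $\Omega(d^r/n^{r-1})=\Omega(n^{r/s}(\log n)^{4k/s})$. I would then form the bipartite auxiliary graph $\mathcal{H}$ on $\binom{X}{r}\sqcup\binom{Y}{r}$ by joining $A\in\binom{X}{r}$ to $Z\in\binom{Y}{r}$ whenever $A\cup Z$ spans a $K_{r,r}$ in $G$; a $C_{2k}$ in $\mathcal{H}$ whose $2k$ participating $r$-sets are pairwise vertex-disjoint in $V(G)$ is precisely a copy of $C_{2k}[r]$.

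To find such a non-degenerate $C_{2k}$ in $\mathcal{H}$, I would adapt the walk-counting engine from the proof of Theorem \ref{thm:rainbowC2k} to count blown-up walks $A_0,Z_0,A_1,Z_1,\dots,A_k$ in $G$ of length $2k$ in which consecutive $r$-sets span $K_{r,r}$'s and all $r$-sets along the walk are kept pairwise vertex-disjoint in $V(G)$ by greedy deletion at each extension step. A dependent-random-choice preprocessing first restricts $X$ and $Y$ to substructures where every $r$-subset has common neighbourhood close to its expected size, providing the uniformity needed for the walk-count lower bound; a pigeonhole argument over the $\binom{n}{r}^2$ possible endpoint pairs $(A_0,A_k)$ then produces two walks with matching endpoints, which glue together into the desired $C_{2k}[r]$.

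The main obstacle is controlling density and disjointness simultaneously in $\mathcal{H}$. Applying Bondy--Simonovits to $\mathcal{H}$ alone gives only a (possibly degenerate) $C_{2k}$ whose participating $r$-sets may share vertices in $V(G)$, and ruling these degenerate configurations out while keeping enough walks to pigeonhole is the technical heart of the argument. The polylog exponent $4k/(r(k+r-1))$ on $\log n$ absorbs both the polylogarithmic cost of dependent random choice (scaled through the $r$-th power in the common-neighbourhood computation) and the polylog overhead of the walk-count argument (analogous to the Bondy--Simonovits polylog for $C_{2k}$), so that after all the losses the walk count still beats the endpoint count by a $(\log n)^{\Omega(1)}$ margin sufficient to force disjointness and extract a genuine copy of $C_{2k}[r]$.
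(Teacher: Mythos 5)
Your overall architecture (pass to $r$-subsets, build an auxiliary bipartite graph whose edges are $K_{r,r}$'s, and look for a $C_{2k}$ there with pairwise vertex-disjoint $r$-sets) matches the paper's, and your convexity computation for the density of the auxiliary graph is the right substitute for the Erd\H{o}s--Simonovits supersaturation lemma the paper invokes. But there is a genuine gap at what you yourself call the technical heart, and it is quantitative, not just a matter of detail: a walk-count-versus-endpoint-count pigeonhole over the $\binom{n}{r}^2$ endpoint pairs can only yield the exponent $2-\frac{1}{r}+\frac{1}{k}$, not the claimed $2-\frac{1}{r}+\frac{1}{k+r-1}$. Concretely, if $D$ denotes the typical degree in the auxiliary graph $\mathcal{H}$ (which supersaturation gives as roughly $n^{\e r^2}$ when $e(G)\approx n^{2-1/r+\e}$), then the number of length-$k$ walks is about $n^rD^k$, and beating $n^{2r}$ endpoint pairs forces $D\gtrsim n^{r^2/k}$, i.e.\ $\e\geq 1/k$. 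The paper gets the improvement to $\e=\frac{1}{k+r-1}$ by a different comparison: its key counting lemma bounds the number of degenerate homomorphic $2k$-cycles by an expression of the form $M^{1/2}\hom(C_{2},\mathcal{G})^{\frac{1}{2k-2}}\hom(C_{2k},\mathcal{G})^{1-\frac{1}{2k-2}}$ (the ``$\ell=1$'' case), where $M\approx D_1^{1-1/r}D_2$ exploits the fact that only a $D^{-1/r}$-fraction of the neighbours of an $r$-set can intersect a given $r$-set. The appearance of $\hom(C_2,\mathcal{G})\leq mD_2$ in place of $m$, justified by the spectral Cauchy--Schwarz inequality $\hom(C_{2k-2},\mathcal{G})\leq\hom(C_{2},\mathcal{G})^{\frac{1}{k-1}}\hom(C_{2k},\mathcal{G})^{1-\frac{1}{k-1}}$, is exactly what shifts $1/k$ to $1/(k+r-1)$; nothing in your sketch supplies this mechanism.

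A second, related problem is that greedy deletion during walk extension produces a single non-degenerate walk (the intersection bound $r^{r+1}D^{1-1/r}=o(D)$ does make each step survivable), but it does not control the gluing step: the two walks you pigeonhole into a cycle must be internally disjoint from \emph{each other} as subsets of $V(G)$, and a pigeonhole that merely matches endpoints gives no handle on this. The paper avoids the issue entirely by a global count: it bounds the total number of degenerate homomorphic $2k$-cycles (via a dyadic classification of endpoint pairs by their path counts, double-counted in two ways) and shows this is $o(\hom(C_{2k},\mathcal{G}))$, so a fully non-degenerate cycle exists by subtraction rather than by construction. To repair your argument you would need either to import that global counting lemma or to replace the endpoint pigeonhole by one weighted by $\hom_{x,y}(P_k,\cdot)$, which is essentially rediscovering the paper's Lemma on degenerate cycles.
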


This is still quite a long way from the conjectured $\ex(n,C_{2k}[r])=O(n^{2-\frac{1}{r}+\frac{1}{kr}})$. However, it can be used to disprove the following conjecture of Erd\H os and Simonovits.

\begin{conjecture}[Erd\H os--Simonovits \cite{ESonline}] \label{con:nondegenerate}
	Let $H$ be a bipartite graph with minimum degree~$s$. Then there exists $\e>0$ such that $\ex(n,H)=\Omega(n^{2-\frac{1}{s-1}+\e})$.
\end{conjecture}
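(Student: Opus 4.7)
The plan is to \emph{disprove} Conjecture \ref{con:nondegenerate} rather than prove it, by exhibiting a bipartite graph $H$ of minimum degree $s$ whose Tur\'an number falls strictly below the polynomial threshold $n^{2-1/(s-1)+\e}$ for every $\e>0$. The natural candidate, supplied by Theorem \ref{thm:turanblowup2k}, is the blown-up even cycle $H=C_{2k}[r]$ for suitably chosen $r\geq 2$ and $k$ large in terms of $r$.

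First, I would verify the hypotheses of the conjecture for this $H$. The graph $C_{2k}[r]$ is bipartite, being the blow-up of the bipartite graph $C_{2k}$, and it is regular of degree $2r$: every vertex of $C_{2k}$ has degree $2$, so every vertex of $C_{2k}[r]$ lies in exactly two copies of $K_{r,r}$. Thus $s=2r$, and Conjecture \ref{con:nondegenerate} would predict that $\ex(n,C_{2k}[r]) = \Omega(n^{2-\frac{1}{2r-1}+\e})$ for some $\e = \e(k,r) > 0$.

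Next, I would compare this with the upper bound of Theorem \ref{thm:turanblowup2k} and check the elementary inequality
$$2-\frac{1}{r}+\frac{1}{k+r-1} \;<\; 2-\frac{1}{2r-1}, \qquad \text{equivalently,} \qquad \frac{1}{k+r-1} \;<\; \frac{r-1}{r(2r-1)}.$$
For every fixed $r\geq 2$ the right-hand side is a positive constant, so the inequality holds once $k$ is sufficiently large in terms of $r$. For example, taking $r=2$ and $k\geq 6$ already gives $\frac{1}{k+1}\leq \frac{1}{7} < \frac{1}{6}$, with a concrete positive gap in the exponent.

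Finally, since the polylogarithmic factor $(\log n)^{4k/(r(k+r-1))}$ in Theorem \ref{thm:turanblowup2k} is negligible compared to any positive power of $n$, such a choice of parameters gives $\ex(n,C_{2k}[r]) = O(n^{2-\frac{1}{2r-1}-\delta})$ for some $\delta = \delta(k,r) > 0$, contradicting the conjectured lower bound $\Omega(n^{2-\frac{1}{2r-1}+\e})$. The only real obstacle is to choose $(r,k)$ so that the arithmetic works out; the substantive graph-theoretic content is already packaged into Theorem \ref{thm:turanblowup2k}, and the refutation itself is a short algebraic consequence.
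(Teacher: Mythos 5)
Your proposal is correct and follows essentially the same route as the paper: the paper also refutes the conjecture by taking $H=C_{2k}[r]$, which is bipartite and $2r$-regular, and observing that Theorem \ref{thm:turanblowup2k} gives $\ex(n,C_{2k}[r])=O(n^{2-\frac{1}{r}+\d})$ for any $\d>0$ once $k$ is large, which undercuts the conjectured exponent $2-\frac{1}{2r-1}$ for every $r\geq 2$ (i.e.\ for all even $s\geq 4$). Your explicit verification of the exponent inequality and the remark that the polylogarithmic factor is harmless match the paper's (more briefly stated) argument.
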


To see that this is false, note that the graph $C_{2k}[r]$ has minimum degree $2r$, but, by Theorem \ref{thm:turanblowup2k}, for any $\d>0$, we have $\ex(n,C_{2k}[r])=O(n^{2-\frac{1}{r}+\d})$ for sufficiently large $k$. This means that there exists, for any even $s$ and any $\d>0$, a bipartite graph $H$ with minimum degree $s$ which has $\ex(n,H)=O(n^{2-\frac{2}{s}+\d})$, disproving Conjecture \ref{con:nondegenerate} for all even $s\geq 4$. (See the concluding remarks for a brief discussion of the odd case.) On the other hand, a simple application of the probabilistic deletion method shows that if $H$ is a bipartite graph with minimum degree $s\geq 2$, then there exists $\e=\e(H)>0$ such that $\ex(n,H)=\Omega(n^{2-\frac{2}{s}+\e})$.

\subsection{Overview of the proof}

Here we give a brief sketch of the proof of Theorem \ref{thm:rainbowC2k}. The proofs of the other results will be somewhat similar and in the paper we will use a unified approach. A graph homomorphism from $H$ to $G$ is a map $f:V(H)\rightarrow V(G)$ such that whenever $uv\in E(H)$, we have $f(u)f(v)\in E(G)$. Write $\hom(H,G)$ for the number of such maps. For a properly edge-coloured graph $G$, we call a homomorphism from $C_{2k}$ to $G$ rainbow if the images of different edges of $C_{2k}$ have different colour in $G$. Our key contribution is an upper bound on the number of homomorphisms $C_{2k}\rightarrow G$ which are not rainbow in terms of the maximum degree of $G$, $\hom(C_{2k},G)$ and, perhaps surprisingly, $\hom(C_{2k-2},G)$. Then using a standard inequality between $\hom(C_{2k},G)$ and $\hom(C_{2k-2},G)$, we can turn the bound into one which only involves $|V(G)|$, $\Delta(G)$ and $\hom(C_{2k},G)$, and in which $\hom(C_{2k},G)$ has exponent $1-\frac{1}{2k}$. This means that if $\hom(C_{2k},G)$ is large compared to $|V(G)|$ and $\Delta(G)$ (more precisely, if $\hom(C_{2k},G)=\omega(|V(G)|\Delta(G)^k)$), then there is a homomorphism from $C_{2k}$ to $G$ which is rainbow, essentially giving us a rainbow $2k$-cycle. Given a properly edge-coloured $n$-vertex graph $G$ with $\omega(n^{1+1/k})$ edges, we use a standard result to obtain an $m$-vertex subgraph $G'$ with $\omega(m^{1+1/k})$ edges which is nearly regular. In particular, the average degree of $G'$ is $\omega(m^{1/k})$. It will not be hard to see that $\hom(C_{2k},G')=\omega(m\Delta(G')^k)$, hence we can use our upper bound on the number of non-rainbow homomorphisms to show that there exists a rainbow homomorphism $C_{2k}\rightarrow G'$.

\medskip

The rest of this paper is organised as follows. In Section \ref{sec:mainlemma}, we establish our key lemma in various forms. Most of our results will follow fairly straightforwardly from these lemmas. In Section \ref{sec:2kcycle}, we prove Theorems \ref{thm:rainbowC2k}, \ref{thm:rainbowtheta} and \ref{thm:repeatedcycles}. In Section \ref{sec:acyclic}, we prove Theorem \ref{thm:acyclicrainbow}. The proofs of Theorem \ref{thm:turanblowup} and \ref{thm:turanblowup2k} are given in Section \ref{sec:blownupcycles}. We make some concluding remarks and mention open problems in Section \ref{sec:rainbowremarks}.

\medskip

\textbf{Notation.} As mentioned above, we write $\Hom(H,G)$ for the number of graph homomorphisms $V(H)\rightarrow V(G)$. $P_k$ will denote the path with $k$ edges and we use the convention that $C_2=P_1$ and $C_0$ is the one-vertex graph. For vertices $x,y\in V(G)$, $\Hom_{x,y}(P_k,G)$ denotes the number of walks of length $k$ in $G$ from $x$ to $y$. We will write $d_G(x)$ (or $d(x)$ if $G$ is clear) for the degree of the vertex $x$ in $G$ and we write $N_G(x)$ or $N(x)$ for the neighbourhood of $x$. Finally, we write $\d(G)$ and $\Delta(G)$ for the minimum and maximum degree of $G$, respectively. Logarithms are base $2$ unless stated otherwise.

\section{The key lemma} \label{sec:mainlemma}

Our goal in this section is to develop a method for finding cycles of given length avoiding certain `degenerate' properties. Roughly speaking, our main lemma will be an upper bound on the number of those cycles which are not suitable. We have been deliberately vague about what we mean by a `suitable' cycle. In our first application it will mean rainbow cycle, but later it will have different meanings. For example, in Section \ref{sec:blownupcycles} we will work in auxiliary graphs whose vertices are sets, and a `suitable' cycle in these auxiliary graphs will be one whose vertices are disjoint sets.

With a slight abuse of terminology, we call a homomorphism $H\rightarrow G$ a \emph{homomorphic copy of $H$ in $G$}. That is, a homomorphic copy of $C_{2k}$ is a tuple $(x_1,\dots,x_{2k})\in V(G)^{2k}$ such that $x_i x_{i+1}\in E(G)$ for every $1\leq i\leq 2k$. Here and below, the indices are understood modulo $2k$, i.e. $x_{2k+1}=x_1$. 

\subsection{The simplest versions}

We first state our key lemma in two simple forms which will nevertheless be sufficient for most of our applications. The first one says, roughly speaking, that if `conflicting edges' are locally rare in a graph, then most of the homomorphic $2k$-cycles do not contain a conflicting pair of edges.

\begin{lemma} \label{lemma:simple with edges}
	Let $k\geq 2$ be an integer and let $G=(V,E)$ be a graph on $n$ vertices. Let $\sim$ be a symmetric binary relation defined over $E$ such that for every $uv\in E$ and $w\in V$, $w$ has at most $s$ neighbours $z\in V$ which satisfy $uv\sim wz$.
	Then the number of homomorphic $2k$-cycles $(x_1,x_2,\dots,x_{2k})$ in $G$ such that $x_ix_{i+1}\sim x_jx_{j+1}$ for some $i\neq j$ (here and below $x_{2k+1}:=x_1$) is at most $$32k^{3/2}s^{1/2}\Delta(G)^{1/2}n^{\frac{1}{2k}}\hom(C_{2k},G)^{1-\frac{1}{2k}}.$$
\end{lemma}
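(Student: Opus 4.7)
The plan is to decompose the count of bad (non-rainbow) cycles according to the positions $(i,j)$ of the two conflicting edges. For each ordered pair $i \neq j$ in $\{1, \dots, 2k\}$, let $N(i, j)$ denote the number of homomorphic $2k$-cycles $(x_1, \dots, x_{2k})$ with $x_i x_{i+1} \sim x_j x_{j+1}$; the number of bad cycles is then at most $\sum_{i \neq j} N(i, j)$. By the cyclic symmetry of $C_{2k}$, this sum equals $2k$ times $\sum_{j = 2}^{2k} N(1, j)$, so it suffices to bound the latter.

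For fixed $j$, set $a = j - 2$ and $b = 2k - j$, so that $a + b = 2k - 2$. Splitting the cycle around the two conflicting edges gives
\[
N(1, j) = \sum_{\substack{(u, v),\, (w, z) \\ uv,\, wz \in E(G),\ uv \sim wz}} \hom_{v, w}(P_a, G)\, \hom_{z, u}(P_b, G),
\]
where the sum runs over ordered pairs of ordered edges. I would apply Cauchy--Schwarz to separate the two walk factors, using the assumption that for each fixed edge $uv$ and each vertex $w$ (respectively $z$) there are at most $s$ choices of the other endpoint of $wz$, and that any vertex has at most $\Delta(G)$ neighbours. Combined with the identity $\sum_{x, y} \hom_{x, y}(P_m, G)^2 = \hom(C_{2m}, G)$, this yields a bound of the shape $N(1, j) \leq C\, s^{\alpha}\, \Delta(G)^{\beta}\, \sqrt{\hom(C_{2a}, G)\, \hom(C_{2b}, G)}$ for suitable exponents $\alpha, \beta$.

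The remaining task is to convert these cycle-homomorphism counts into expressions involving $\hom(C_{2k-2}, G)$ and $\hom(C_{2k}, G)$. Using log-convexity of $m \mapsto \hom(C_{2m}, G)$ (a consequence of writing this as $\sum_i \lambda_i^{2m}$ over the adjacency eigenvalues and applying H\"older's inequality), together with the coarser bound $\hom(C_{2m}, G) \leq \Delta(G)^{2m - 2k + 2}\, \hom(C_{2k-2}, G)$ for $m \geq k - 1$, one estimates $\hom(C_{2a}, G) \hom(C_{2b}, G)$ in terms of $n$, $\Delta(G)$ and $\hom(C_{2k-2}, G)$. Summing over $j$ with the corresponding $k^{3/2}$ accounting and finally applying the ``standard'' log-convexity inequality $\hom(C_{2k-2}, G) \leq n^{1/k}\, \hom(C_{2k}, G)^{1 - 1/k}$ converts $\hom(C_{2k-2}, G)$ into an expression in $\hom(C_{2k}, G)$ and $n$, giving the claimed form.

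The main obstacle I anticipate is obtaining exponents $\alpha = \beta = 1/2$ rather than $1$. A naive symmetric Cauchy--Schwarz produces $\alpha = \beta = 1$ and turns out to be too lossy (it already fails on small examples such as the star with $\sim$ being edge-equality). The sharp bound appears to require a more delicate asymmetric Cauchy--Schwarz, or a two-step arrangement in which the intermediate quantity $\hom(C_{2k-2}, G)$ absorbs ``half'' of the conflict-sparseness and max-degree factors, so that $s$ and $\Delta(G)$ only appear under square roots in the final estimate. The boundary cases $a = 0$ or $b = 0$ (two conflicting edges sharing a vertex) are particularly subtle, since the walks degenerate and a straightforward estimate produces an unfavourable factor of $\Delta(G)^k$ that must be controlled separately.
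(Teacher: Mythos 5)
Your framework is on the right track -- decomposing around the conflicting pair of edges, expressing the count via $\hom_{x,y}(P_m,G)$, and finishing with the log-convexity inequality $\hom(C_{2k-2},G)\leq n^{1/k}\hom(C_{2k},G)^{1-1/k}$ (this last step is exactly the paper's Corollary \ref{cor:hom inequality}). But the central quantitative step is missing, and you say so yourself: you correctly compute that the symmetric Cauchy--Schwarz gives $N(1,j)\leq s\,\Delta(G)\sqrt{\hom(C_{2a},G)\hom(C_{2b},G)}$, i.e.\ $\alpha=\beta=1$, and you only speculate that ``a more delicate asymmetric Cauchy--Schwarz'' might give $1/2$. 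No such Cauchy--Schwarz variant is exhibited, and this is precisely where the paper's proof does something genuinely different. The mechanism in the paper (Lemma \ref{lemma:most general}) is a dyadic pigeonholing: classify cycles by the dyadic sizes $2^{r-1}\leq\hom_{x_1,x_{k+2}}(P_{k-1},G)<2^r$ and $2^{t-1}\leq\hom_{x_2,x_{k+2}}(P_k,G)<2^t$, and bound the resulting count $\gamma_{r,t}$ in \emph{two} complementary ways: once by building the cycle from the $(k-1)$-walk side, paying a factor $\Delta$ for the free choice of $x_2$ (this bound ignores the conflict entirely), and once from the $k$-walk side, paying only $ks$ for the choice of $x_1$ because the conflict condition pins $x_1$ down to at most $s$ neighbours for each of the $k$ possible positions of the other conflicting edge. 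Summing $\min$ of the two bounds over $(r,t)$ with the threshold $t\gtrless r+q$ chosen to balance them yields the geometric mean of the two totals, which is where $s^{1/2}\Delta^{1/2}$ (and the product $\hom(C_{2k-2},G)^{1/2}\hom(C_{2k},G)^{1/2}$) comes from. Without this balancing device -- or an equivalent one -- your argument only delivers the lossy exponent $1$, which, as you note, is not enough.

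Two further structural points where the paper's route sidesteps difficulties you flag. First, it does not treat each position pair $(i,j)$ separately: it fixes one conflicting edge at $(x_1,x_2)$, allows the other to sit anywhere among positions $2\leq i\leq k+1$ (absorbing the position into the factor $k$ inside $ks$), and handles the rest by cyclic symmetry with an overall factor $2k$. Second, the cycle is always split into a $(k-1)$-walk from $x_{k+2}$ to $x_1$ and a $k$-walk from $x_2$ to $x_{k+2}$, independently of where the second conflicting edge lies; this is why only $\hom(C_{2k-2},G)$ and $\hom(C_{2k},G)$ appear (rather than $\hom(C_{2a},G)\hom(C_{2b},G)$ for varying $a+b=2k-2$), and it makes the degenerate cases $a=0$ or $b=0$ that worry you simply not arise.
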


For example, when our goal is to find rainbow cycles, we set $e\sim f$ when the edges $e$ and $f$ have the same colour and apply the lemma to conclude that if $\hom(C_{2k},G)=\omega(n\Delta(G)^k)$, then most of the homomorphic $2k$-cycles in $G$ are rainbow. We will also frequently use the following variant which bounds the number of homomorphic cycles with conflicting \emph{vertices}, rather than edges.

\begin{lemma} \label{lemma:simple with vertices}
	Let $k\geq 2$ be an integer and let $G=(V,E)$ be a graph on $n$ vertices. Let $\sim$ be a symmetric binary relation defined over $V$ such that for every $u\in V$ and $v\in V$, $v$ has at most $s$ neighbours $w\in V$ which satisfy $u\sim w$.
	Then the number of homomorphic $2k$-cycles $(x_1,x_2,\dots,x_{2k})$ in $G$ such that $x_i\sim x_j$ for some $i\neq j$ is at most $$32k^{3/2}s^{1/2}\Delta(G)^{1/2}n^{\frac{1}{2k}}\hom(C_{2k},G)^{1-\frac{1}{2k}}.$$
\end{lemma}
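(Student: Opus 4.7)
The plan is to follow the template of Lemma~\ref{lemma:simple with edges}, adapted to vertex conflicts. The proof has three stages: a union bound over the conflicting positions, a two-step Cauchy--Schwarz that converts each vertex conflict into a factor of $\sqrt{s\,\Delta(G)}$, and a log-convexity estimate on even-cycle homomorphism counts.

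For each pair $(i, j)$ with $1 \le i < j \le 2k$, let $B_{i, j}$ denote the number of homomorphic $2k$-cycles with $x_i \sim x_j$, and write $a = j - i$, $b = 2k - a$. Then $B_{i, j} = \sum_{u \sim w} \hom_{u, w}(P_a, G)\,\hom_{u, w}(P_b, G)$. A first Cauchy--Schwarz over $(u, w)$ yields
\[
B_{i, j}^{2} \;\le\; \hom(C_{2a}, G)\cdot W_b, \qquad W_\ell := \sum_{u \sim w} \hom_{u, w}(P_\ell, G)^{2}.
\]
To estimate $W_\ell$, expand $\hom_{u, w}(P_\ell) = \sum_{z \in N(w)} \hom_{u, z}(P_{\ell-1})$, apply a second Cauchy--Schwarz on the inner sum (bringing in a factor of $d(w) \le \Delta(G)$), and use the hypothesis that $|\{w \in N(z) : u \sim w\}| \le s$ for every $u, z$. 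This yields $W_\ell \le s\,\Delta(G)\,\hom(C_{2\ell-2}, G)$, hence $B_{i, j}^{2} \le s\,\Delta(G)\,\hom(C_{2a}, G)\,\hom(C_{2b-2}, G)$ (with a symmetric bound after swapping $a$ and $b$). Hölder applied to the spectral representation $\hom(C_{2m}, G) = \sum_i \lambda_i(G)^{2m}$ gives $\hom(C_{2m}, G) \le n^{1-m/k}\hom(C_{2k}, G)^{m/k}$ for $0 \le m \le k$; substituting this yields, for pairs where both arguments fall in the usable range,
\[
B_{i, j} \;\lesssim\; \sqrt{s\,\Delta(G)} \cdot n^{1/(2k)} \cdot \hom(C_{2k}, G)^{1 - 1/(2k)},
\]
and summing over the $\binom{2k}{2}$ pairs delivers the claimed estimate.

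The hardest part is to extend this per-pair bound uniformly to all pairs $(i, j)$, not just those with $\min(a, b)$ close to $k$: if $a$ is much smaller than $k$, then $2b$ exceeds $2k$ and the log-convexity inequality is not directly available for $\hom(C_{2b}, G)$. I would address this by combining the walk factorisation $\hom_{u, v}(P_b) = \sum_y \hom_{u, y}(P_{b-k})\,\hom_{y, v}(P_k)$ with a further Cauchy--Schwarz to obtain $\hom(C_{2b}, G) \le \hom(C_{2(b-k)}, G)\,\hom(C_{2k}, G)$, thereby replacing the out-of-range factor by cycle counts of length at most $2k$, to which log-convexity does apply. The $k^{3/2}$ (rather than a naive $k^2$) in the constant presumably comes from a careful grouping of pairs by their cyclic distance.
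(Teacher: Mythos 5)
Your first two steps are sound: the identity $B_{i,j}=\sum_{u\sim w}\hom_{u,w}(P_a,G)\hom_{u,w}(P_b,G)$, the Cauchy--Schwarz giving $B_{i,j}^2\le \hom(C_{2a},G)\,W_b$, the estimate $W_\ell\le s\,\Delta(G)\,\hom(C_{2\ell-2},G)$, and the H\"older bound $\hom(C_{2m},G)\le n^{1-m/k}\hom(C_{2k},G)^{m/k}$ for $0\le m\le k$ are all correct (the last is exactly the paper's Corollary \ref{cor:hom inequality} with $\ell=0$). But this only covers pairs with $a\le k$ and $b-1\le k$, i.e.\ $a\in\{k-1,k\}$ up to swapping the roles of $a$ and $b$, and your patch for the remaining pairs does not work. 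The inequality $\hom(C_{2m},G)\le \hom(C_{2(m-k)},G)\hom(C_{2k},G)$ is true, but it is too lossy: for a pair with $b-1=k+c$, $c\ge 1$, it gives $\hom(C_{2(b-1)},G)\le n^{1-c/k}\hom(C_{2k},G)^{1+c/k}=n^{2-(b-1)/k}\hom(C_{2k},G)^{(b-1)/k}$, which carries an extra factor of $n$ compared with the (false for $m>k$) extrapolation $n^{1-m/k}\hom(C_{2k},G)^{m/k}$. Tracking exponents with $a+(b-1)=2k-1$ then yields only $B_{i,j}^2\le s\,\Delta(G)\,n^{1+1/k}\hom(C_{2k},G)^{2-1/k}$, i.e.\ $B_{i,j}\lesssim \sqrt{s\Delta(G)}\,n^{1/2+1/(2k)}\hom(C_{2k},G)^{1-1/(2k)}$ --- off by $\sqrt{n}$ from the target. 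The obstruction is structural: splitting the cycle at the two conflicting positions produces arcs of very unequal length whenever the conflict is not antipodal, and no purely spectral inequality recovers the balanced exponent from an unbalanced split.

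The paper sidesteps this entirely. In Lemma \ref{lemma:most general} the cycle is \emph{always} split into two arcs of lengths $k-1$ and $k+1$ (meeting at $x_1$ and $x_{k+2}$), regardless of where the conflict sits; by cyclic symmetry one may assume the whole conflicting pair lies inside the longer arc. One then performs a dyadic decomposition on the pointwise walk counts $\hom_{y,z}(P_{k-1},G)$ and $\hom_{y,z}(P_k,G)$ and counts the bad cycles in each dyadic class in two different ways: once by choosing the $(k-1)$-arc first (not using the relation at all), and once by choosing the $k$-arc first and using the hypothesis to pin $x_1$ down to at most $ks$ choices --- this is where the relation enters, as a constraint on a \emph{single vertex} rather than as a split point, and it is also the source of the $k^{1/2}$ beyond the factor $k$ from cyclic symmetry. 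Balancing the two counts with an optimally chosen dyadic threshold gives a bound involving $\hom(C_{2k-2},G)\hom(C_{2k},G)$, and only then is your log-convexity step applied to convert $\hom(C_{2k-2},G)$ into $n^{1/k}\hom(C_{2k},G)^{1-1/k}$. If you want to rescue your scheme, you need some mechanism of this kind for conflicts interior to an arc; the per-pair Cauchy--Schwarz at the conflict positions cannot be made to work on its own.
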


When we aim to find rainbow cycles, we will take $u\sim v$ if and only if $u=v$. This will allow us to conclude that if $\hom(C_{2k},G)=\omega(n\Delta(G)^k)$, then almost every homomorphism $C_{2k}\rightarrow G$ is injective and hence its image is isomorphic to $C_{2k}$.

\subsection{The general statement and its variations}

We now state a more technical version of the key lemma which is sometimes more convenient when our graph is bipartite with unbalanced parts. In this paper we will only apply the lemma with $X_1=X_2=V$ or with $X_1$ and $X_2$ partitioning $V$. Another difference compared to the lemmas from the previous subsection is that the formula involves $\hom(C_{2\ell},G)$. Using $\hom(C_0,G)=|V(G)|$, we recover the bound in the previous lemmas when $\ell=0$. In this paper, we will always take $\ell=0$ or $\ell=1$ in our applications (since it is more difficult to find good upper bounds for $\hom(C_{2\ell},G)$ when $\ell$ is bigger).

\begin{lemma} \label{lemma:bipartite with 2l}
	Let $k\geq 2$ and $0\leq \ell\leq k-1$ be integers and let $G=(V,E)$ be a graph on $n$ vertices. Let $X_1$ and $X_2$ be subsets of $V$. Let $\sim$ be a symmetric binary relation defined over $V^2$ such that
	\begin{itemize}
		\item for every $(u,v)\in V^2$ and $w\in X_1$, $w$ has at most $\Delta_1$ neighbours $z\in X_2$ and amongst them at most $s_1$ satisfies $(u,v)\sim (z,w)$, and
		\item for every $(u,v)\in V^2$ and $w\in X_2$, $w$ has at most $\Delta_2$ neighbours $z\in X_1$ and amongst them at most $s_2$ satisfies $(u,v)\sim (z,w)$.
	\end{itemize}
	Let $M=\max(\Delta_1 s_2,\Delta_2 s_1)$.
	Then the number of homomorphic $2k$-cycles $(x_1,x_2,\dots,x_{2k})\in (X_1\times X_2\times X_1\times \dots \times X_2)\cup (X_2\times X_1\times X_2\times \dots \times X_1)$ in $G$ such that $(x_i,x_{i+1})\sim (x_j,x_{j+1})$ for some $i\neq j$ is at most $$32k^{3/2}M^{1/2}\hom(C_{2\ell},G)^{\frac{1}{2(k-\ell)}}\hom(C_{2k},G)^{1-\frac{1}{2(k-\ell)}}.$$
\end{lemma}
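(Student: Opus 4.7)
The plan is to reduce to fixed conflict positions, prove a base-case Cauchy--Schwarz bound involving $\hom(C_{2(k-1)},G)$ and $\hom(C_{2k},G)$, and then interpolate to general $\ell$ via log-convexity.

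First, a bad homomorphic $2k$-cycle has some pair of positions $(i,j)$ with $i\neq j$ witnessing the $\sim$-conflict. By cyclically translating indices and by summing over the two possible alternation orientations, it suffices to fix $i = 1$ and sum over $j \in \{2, \dots, 2k\}$; together with the square root that will be absorbed from the final Cauchy--Schwarz, this accounts for the $k^{3/2}$ prefactor. For each fixed $j$, writing $a = j - 2$ and $b = 2k - j$ (so $a + b = 2k - 2$), the count of bad cycles with conflict at positions $1$ and $j$ equals
\[
T_j \;=\; \sum_{(u_1, u_2, u_3, u_4)\text{ bad}} W_a(u_2, u_3)\, W_b(u_4, u_1),
\]
where $W_m(x,y) = \hom_{x,y}(P_m, G)$ counts walks of length $m$, and the 4-tuples are required to have $u_1 u_2, u_3 u_4 \in E$, to respect the alternation between $X_1$ and $X_2$, and to satisfy $(u_1, u_2) \sim (u_3, u_4)$.

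The main technical step is to establish the base case $\ell = k - 1$, namely
\[
T_j \;\leq\; C_0 \cdot M^{1/2}\, \hom(C_{2(k-1)}, G)^{1/2}\, \hom(C_{2k}, G)^{1/2}
\]
for an absolute constant $C_0$. This is obtained by a carefully crafted Cauchy--Schwarz that separates the $\sim$-conflict contribution from the cycle structure: the hypothesis on $\sim$ (the bounds $s_1, s_2$ on the number of $\sim$-related edges through a vertex, together with $\Delta_1, \Delta_2$) yields the factor $M^{1/2}$, while the remaining walk counts are arranged so that on one side the walks recombine with the two conflict edges into a full homomorphic $2k$-cycle (giving $\hom(C_{2k},G)^{1/2}$) and on the other side one is left with a length-$2(k{-}1)$ closed walk (giving $\hom(C_{2(k-1)},G)^{1/2}$).

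Finally, to obtain the claimed bound for arbitrary $\ell \in \{0, 1, \dots, k-1\}$, we apply log-convexity of $m \mapsto \log \hom(C_{2m}, G)$---a consequence of Cauchy--Schwarz applied to the eigenvalues of the adjacency matrix---at the point $m = k-1$ interpolating between $m = \ell$ and $m = k$:
\[
\hom(C_{2(k-1)}, G) \;\leq\; \hom(C_{2\ell}, G)^{1/(k - \ell)}\, \hom(C_{2k}, G)^{(k - \ell - 1)/(k - \ell)}.
\]
Substituting into the base-case bound produces the exponent $1 - \tfrac{1}{2(k-\ell)}$ on $\hom(C_{2k}, G)$ and $\tfrac{1}{2(k-\ell)}$ on $\hom(C_{2\ell}, G)$; summing $T_j$ over positions and orientations and absorbing constants yields the $32 k^{3/2}$ prefactor.

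The main obstacle is the base-case Cauchy--Schwarz itself. A naive attempt that decouples the two walks gives only $T_j \leq M^{1/2} \hom(C_{2a}, G)^{1/2} \hom(C_{2b}, G)^{1/2}$, which by log-convexity is at least $M^{1/2} \hom(C_{2(k-1)}, G)$ and so falls short of the target whenever $\hom(C_{2k}, G)$ is much larger than $\hom(C_{2(k-1)}, G)$. The refined Cauchy--Schwarz must exploit the full $2k$-cycle structure, most likely by first decomposing $W_a$ or $W_b$ at an interior vertex and then pairing pieces across the two arcs so that, after Cauchy--Schwarz, one factor reassembles into closed walks of length $2k$ (yielding $\hom(C_{2k}, G)$) while the other collects closed walks of length $2(k{-}1)$ (yielding $\hom(C_{2(k-1)}, G)$) weighted by the $\sim$-conflict indicator, whose density is controlled by $M$.
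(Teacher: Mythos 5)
Your overall architecture is exactly right and matches the paper: reduce by cyclic symmetry to a conflict anchored at position $1$, prove the case $\ell=k-1$ with the bound $O(k)\cdot(kM\hom(C_{2k-2},G)\hom(C_{2k},G))^{1/2}$, and then interpolate to general $\ell$ via the log-convexity $\hom(C_{2k-2},G)\leq \hom(C_{2\ell},G)^{\frac{1}{k-\ell}}\hom(C_{2k},G)^{1-\frac{1}{k-\ell}}$, which follows from Cauchy--Schwarz applied to the eigenvalue expansion of $\hom(C_{2m},G)$; your algebra for this substitution is correct. However, the base case is the entire content of the lemma, and you have not proved it: you correctly diagnose that the naive decoupling only yields $M^{1/2}\hom(C_{2a},G)^{1/2}\hom(C_{2b},G)^{1/2}$, which is too weak, but then you only describe what a ``refined Cauchy--Schwarz'' would \emph{need} to accomplish (``most likely by first decomposing $W_a$ or $W_b$ at an interior vertex\dots''), without exhibiting the inequality. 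As written, this is a genuine gap, not a proof.

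For the record, the paper does not use a single Cauchy--Schwarz here but a dyadic double-counting argument. Fix the conflict so that $(x_1,x_2)\sim(x_i,x_{i+1})$ for some $2\leq i\leq k+1$, and classify each bad cycle by the dyadic sizes $2^{r-1}\leq \hom_{x_1,x_{k+2}}(P_{k-1},G)<2^r$ and $2^{t-1}\leq \hom_{x_2,x_{k+2}}(P_k,G)<2^t$; call the resulting count $\gamma_{r,t}$. One then bounds $\gamma_{r,t}$ in two complementary ways: choosing first the $(k-1)$-walk from $x_{k+2}$ to $x_1$, then $x_2$ (at most $\Delta_1$ choices), then the remaining arc (at most $2^t$ choices) gives $\gamma_{r,t}\leq \alpha_r\Delta_1 2^t$; alternatively, choosing first the $k$-walk from $x_2$ to $x_{k+2}$, then $x_1$ (at most $ks_2$ choices, using the conflict hypothesis over the $k$ possible positions $i$), then the remaining arc (at most $2^r$ choices) gives $\gamma_{r,t}\leq \beta_t ks_2 2^r$. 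Here $\sum_r\alpha_r2^{r-1}\leq\hom(C_{2k-2},G)$ and $\sum_t\beta_t2^{t-1}\leq\hom(C_{2k},G)$, so splitting the sum $\sum_{r,t}\gamma_{r,t}$ at the threshold $t<r+q$ versus $t\geq r+q$ with $2^q\approx(ks_2\hom(C_{2k},G)/(\Delta_1\hom(C_{2k-2},G)))^{1/2}$ and using the first bound on one range and the second on the other yields $O((kM\hom(C_{2k-2},G)\hom(C_{2k},G))^{1/2})$. You should either carry out this pigeonholing (or an equivalent weighted Cauchy--Schwarz that genuinely couples the conflict count with both walk lengths) before the interpolation step can be invoked.
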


We will again need a version of this which counts the homomorphisms with conflicting vertices.

\begin{lemma} \label{lemma:bipartite with vertices}
	Let $k\geq 2$ and $0\leq \ell\leq k-1$ be integers and let $G=(V,E)$ be a graph on $n$ vertices. Let $X_1$ and $X_2$ be subsets of $V$. Let $\sim$ be a symmetric binary relation defined over $V$ such that
	\begin{itemize}
		\item for every $u\in V$ and $v\in X_1$, $v$ has at most $\Delta_1$ neighbours $w\in X_2$ and amongst them at most $s_1$ satisfies $u\sim w$, and
		\item for every $u\in V$ and $v\in X_2$, $v$ has at most $\Delta_2$ neighbours $w\in X_1$ and amongst them at most $s_2$ satisfies $u\sim w$.
	\end{itemize}
	Let $M=\max(\Delta_1 s_2,\Delta_2 s_1)$.
	Then the number of homomorphic $2k$-cycles $(x_1,x_2,\dots,x_{2k})\in (X_1\times X_2\times X_1\times \dots \times X_2)\cup (X_2\times X_1\times X_2\times \dots \times X_1)$ in $G$ such that $x_i\sim x_j$ for some $i\neq j$ is at most $$32k^{3/2}M^{1/2}\hom(C_{2\ell},G)^{\frac{1}{2(k-\ell)}}\hom(C_{2k},G)^{1-\frac{1}{2(k-\ell)}}.$$
\end{lemma}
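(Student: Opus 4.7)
My plan is to obtain Lemma \ref{lemma:bipartite with vertices} as a direct corollary of Lemma \ref{lemma:bipartite with 2l} by lifting the vertex relation $\sim$ on $V$ to a pair relation $\sim'$ on $V^2$ that attaches each vertex to one fixed coordinate of the incident edge. Concretely, I would set
\[(u,v)\sim'(w,z)\iff u\sim w,\]
which is symmetric because $\sim$ is. The point of attaching the relevant vertex to the \emph{first} coordinate (rather than the second) is that in the local hypothesis of Lemma \ref{lemma:bipartite with 2l} one counts neighbours $z$ of a fixed $w$ and asks whether $(u,v)\sim'(z,w)$ holds; with the above choice this becomes $u\sim z$, which depends genuinely on the varying neighbour $z$ and can therefore be bounded by $s_1$ or $s_2$. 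The mirror choice $v\sim z$ would reduce the local condition to something independent of $z$, collapsing the bound to $\Delta_1$ or $\Delta_2$.

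Next I would verify that the two local-counting hypotheses of Lemma \ref{lemma:bipartite with 2l} hold for $\sim'$ with \emph{exactly} the same parameters $\Delta_1,\Delta_2,s_1,s_2$. Given $(u,v)\in V^2$ and $w\in X_1$, the neighbours $z\in X_2$ of $w$ with $(u,v)\sim'(z,w)$ are precisely the neighbours $z\in X_2$ of $w$ with $u\sim z$; applying the first bullet of Lemma \ref{lemma:bipartite with vertices} with the vertex $u$ and the vertex $w\in X_1$ bounds the count by $s_1$, while $w$ still has at most $\Delta_1$ neighbours in $X_2$. Swapping the roles of $X_1$ and $X_2$ gives the same statement with $\Delta_2,s_2$. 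Hence the quantity $M=\max(\Delta_1 s_2,\Delta_2 s_1)$ appearing in the final bound is unchanged.

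Finally, I would check that every homomorphic $2k$-cycle counted by the present lemma is counted by Lemma \ref{lemma:bipartite with 2l} applied to $\sim'$: if $(x_1,\dots,x_{2k})$ is alternating in $X_1,X_2$ and $x_i\sim x_j$ for some $i\ne j$, then by the definition of $\sim'$ the edges at positions $i$ and $j$ satisfy $(x_i,x_{i+1})\sim'(x_j,x_{j+1})$. Invoking Lemma \ref{lemma:bipartite with 2l} on $\sim'$ thus yields the stated bound verbatim, with the same constant $32k^{3/2}$ and the same exponents. There is essentially no technical obstacle: the whole argument is a one-line reduction, and the only substantive decision, as flagged above, is to set up $\sim'$ so that the coordinate recording relatedness is the one that varies with the neighbour being counted in the local hypothesis.
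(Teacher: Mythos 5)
Your reduction is exactly the paper's own proof: the paper also lifts $\sim$ to the pair relation $(u,v)\sim'(z,w)\iff u\sim z$, checks the two local hypotheses with the same parameters $\Delta_1,\Delta_2,s_1,s_2$, and invokes Lemma \ref{lemma:bipartite with 2l}. Your observation about which coordinate must carry the relation (the one that varies with the neighbour being counted) is the only nontrivial point, and you have it right.
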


Finally, we state the lemma from which all the above will follow. It is Lemma \ref{lemma:bipartite with 2l} in the special case $\ell=k-1$.

\begin{lemma} \label{lemma:most general}
	Let $k\geq 2$ be an integer and let $G=(V,E)$ be a graph. Let $X_1$ and $X_2$ be subsets of $V$. Let $\sim$ be a symmetric binary relation defined over $V^2$ such that
	\begin{itemize}
		\item for every $(u,v)\in V^2$ and $w\in X_1$, $w$ has at most $\Delta_1$ neighbours $z\in X_2$ and amongst them at most $s_1$ satisfies $(u,v)\sim (z,w)$, and
		\item for every $(u,v)\in V^2$ and $w\in X_2$, $w$ has at most $\Delta_2$ neighbours $z\in X_1$ and amongst them at most $s_2$ satisfies $(u,v)\sim (z,w)$.
	\end{itemize}
	Let $M=\max(\Delta_1 s_2,\Delta_2 s_1)$.
	Then the number of homomorphic $2k$-cycles $(x_1,x_2,\dots,x_{2k})\in (X_1\times X_2\times X_1\times \dots \times X_2)\cup (X_2\times X_1\times X_2\times \dots \times X_1)$ in $G$ such that $(x_i,x_{i+1})\sim (x_j,x_{j+1})$ for some $i\neq j$ is at most $$32k\left(kM\hom(C_{2k-2},G)\hom(C_{2k},G)\right)^{1/2}.$$
\end{lemma}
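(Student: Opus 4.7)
The plan is to count bad cycles by decomposing them according to the ``gap'' between the two conflicting edges, and then carefully applying Cauchy--Schwarz to separate the two resulting walk-counts.

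First, I would reduce to a sum over gaps. For each ordered pair of distinct indices $(i,j)$, let $N_{i,j}$ denote the number of homomorphic $2k$-cycles in the prescribed alternation with $(x_i, x_{i+1}) \sim (x_j, x_{j+1})$. A union bound gives that the quantity to estimate is at most $\sum_{i \neq j} N_{i,j}$. By the rotational symmetry of cycles, $N_{i,j}$ depends only on the gap $g := (j-i) \bmod 2k$, and there are exactly $2k$ ordered pairs $(i,j)$ for each gap $g \in \{1, \dots, 2k-1\}$, so the task reduces to bounding $2k \sum_{g=1}^{2k-1} N(g)$ for the canonical count $N(g)$. Cutting the cycle at its two conflicting edges $(a,b) := (x_{2k}, x_1)$ and $(c,d) := (x_g, x_{g+1})$ exhibits it as two walks, one of length $p := g-1$ from $b$ to $c$ and one of length $q := 2k-g-1$ from $d$ to $a$ (so that $p+q = 2k-2$), and therefore
\[
N(g) = \sum_{(a,b,c,d)\text{ configuration}} \Hom_{b,c}(P_p, G) \cdot \Hom_{d,a}(P_q, G),
\]
where a ``configuration'' is a $4$-tuple $(a,b,c,d)$ with $ab, cd \in E$, $(a,b) \sim (c,d)$, and the alternation condition on $(a,b,c,d) \in X_? \times X_? \times X_? \times X_?$.

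Next, I would apply Cauchy--Schwarz to $N(g)$, grouping by the endpoints $(b,c)$ of one walk and $(d,a)$ of the other, so as to separate the walk-counts. Using the identity $\sum_{b,c} \Hom_{b,c}(P_p, G)^2 = \Hom(C_{2p}, G)$, the first factor reduces to $\Hom(C_{2p}, G)$ weighted by $|\{(a,d) : (a,b,c,d)\text{ configuration}\}|$ for each $(b,c)$. The hypothesis is used precisely to control this weight: iterating over $(a,d)$ in the order most compatible with the alternation between $X_1$ and $X_2$ (i.e.\ either $a$ first then $d$, or the reverse, whichever yields the product $\Delta_1 s_2$ or $\Delta_2 s_1$ that appears in $M$), one shows that for each fixed $(b,c)$ the number of valid completions is at most $M$, and an analogous bound holds for the second factor.

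The main obstacle is to arrange the Cauchy--Schwarz so that the final bound features $\sqrt{M}$ (rather than $M$) alongside the specific product $\Hom(C_{2k-2}, G) \cdot \Hom(C_{2k}, G)$, rather than a $g$-dependent product $\Hom(C_{2p}, G)\Hom(C_{2q}, G)$ with an undesired extra factor of $M$. To achieve this I would apply the Cauchy--Schwarz asymmetrically (placing the relation-constrained count on just one of the two sides) and extend the walks by their bordering edges $(a,b)$ and $(c,d)$, so that the extended walks naturally assemble into closed walks contributing $\Hom(C_{2k}, G)$, while a complementary contraction of one edge contributes $\Hom(C_{2k-2}, G)$. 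Alternatively, after obtaining a $g$-dependent bound, one can sum over gaps using a further Cauchy--Schwarz and the log-convexity inequality $\Hom(C_{2m}, G)^2 \leq \Hom(C_{2m-2}, G) \cdot \Hom(C_{2m+2}, G)$ (a consequence of Cauchy--Schwarz applied to the eigenvalue spectrum of the adjacency matrix of $G$) to replace the varying cycle lengths $(2p, 2q)$ by $(2k-2, 2k)$. The factor $k$ outside and $\sqrt{k}$ inside the square root both arise from summing over the $O(k)$ possible gaps.
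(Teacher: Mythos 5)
Your strategy---cut each bad cycle at its two conflicting edges into walks of lengths $p=g-1$ and $q=2k-g-1$, bound the completions of a pair of walk-endpoints via the hypothesis, then use Cauchy--Schwarz and log-convexity to assemble $\hom(C_{2k-2},G)\hom(C_{2k},G)$---does not go through, and the two steps you yourself flag as obstacles are exactly where it fails. First, the symmetric Cauchy--Schwarz over configurations gives $N(g)\leq M\left(\hom(C_{2p},G)\hom(C_{2q},G)\right)^{1/2}$ with a full factor $M$: both weights $|\{(a,d)\}|$ and $|\{(b,c)\}|$ have the same $\Delta\cdot s$ structure, so the geometric mean of the two is still of order $M$, and your sketch supplies no working asymmetric substitute. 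Second, the proposed normalisation via log-convexity goes in the wrong direction: since $m\mapsto \hom(C_{2m},G)$ is log-convex, for fixed $p+q=2k-2$ the product $\hom(C_{2p},G)\hom(C_{2q},G)$ is \emph{minimised} at the balanced split and grows as the split becomes lopsided; for $p=0$, $q=2k-2$ it equals $n\cdot\hom(C_{4k-4},G)$, which for a triangle plus $n-3$ isolated vertices exceeds $\hom(C_{2k-2},G)\hom(C_{2k},G)$ by a factor of order $n$. Third, there is a parity problem in the completion bound: the hypothesis only controls, for a fixed pair $(u,v)$ and anchor $w$, the neighbours $z$ of $w$ with $(u,v)\sim(z,w)$, so the free vertex must occupy the \emph{first} coordinate of its pair. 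When $g$ is odd, $b$ and $c$ lie in the same part, the only admissible order of choices yields $\Delta_1 s_1$ or $\Delta_2 s_2$, and neither is bounded by $M=\max(\Delta_1 s_2,\Delta_2 s_1)$ in general (take $\Delta_1=s_1=100$, $\Delta_2=s_2=1$).

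The paper avoids all three problems by cutting differently. Every bad cycle is rotated so that one conflicting edge is $(x_1,x_2)$ and the other is $(x_i,x_{i+1})$ with $2\leq i\leq k+1$; the cycle is then split at the edge $x_1x_2$ and at the \emph{antipodal vertex} $x_{k+2}$ into a walk of length $k$ from $x_2$ to $x_{k+2}$ (which contains the second conflicting edge) and a walk of length $k-1$ from $x_{k+2}$ to $x_1$. The walk lengths are thus always $k$ and $k-1$, independent of the gap, which is where $\hom(C_{2k},G)$ and $\hom(C_{2k-2},G)$ come from. The role of Cauchy--Schwarz is played by a dyadic classification of walks according to $\hom_{y,z}(P_{k-1},G)$ and $\hom_{y,z}(P_k,G)$ between their endpoints, combined with two complementary reconstructions of the cycle: one chooses the length-$(k-1)$ walk first and pays $\Delta_1$ for $x_2$; the other chooses the length-$k$ walk first and pays only $ks_2$ for $x_1$, because the second conflicting edge is by then already fixed. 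Taking the better of the two counts in each dyadic class and optimising a threshold yields $\left(k\Delta_1 s_2\hom(C_{2k-2},G)\hom(C_{2k},G)\right)^{1/2}$; this is the genuinely asymmetric step your proposal is missing, and it is what produces $\sqrt{M}$ and $\sqrt{k}$ rather than $M$ and $k$.
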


\subsection{The proofs}

We first prove Lemma \ref{lemma:most general} and later deduce the other variants from it.

\begin{proof}[Proof of Lemma \ref{lemma:most general}]
	For a positive integer $r$, let $\alpha_r$ be the number of walks of length $k-1$ in $G$ whose endpoints $y$ and $z$ have $2^{r-1}\leq \hom_{y,z}(P_{k-1},G)<2^r$ and let $\beta_r$ be the number of walks of length $k$ in $G$ whose endpoints $y$ and $z$ have $2^{r-1}\leq \hom_{y,z}(P_k,G)<2^r$. Clearly,
	\begin{equation}
	\sum_{r\geq 1} \alpha_r 2^{r-1}\leq \Hom(C_{2k-2},G) \label{eqn:alphahom}
	\end{equation} and
	\begin{equation}
	\sum_{r\geq 1} \beta_r 2^{r-1}\leq \Hom(C_{2k},G). \label{eqn:betahom}
	\end{equation}
	
	\sloppy For positive integers $r$ and $t$, write $\gamma_{r,t}$ for the number of homomorphic $2k$-cycles $(x_1,x_2,\dots ,x_{2k})\in X_1\times X_2\times X_1\times \dots \times X_2$ such that $2^{r-1}\leq \hom_{x_1,x_{k+2}}(P_{k-1},G)<2^r$, $2^{t-1}\leq \hom_{x_2,x_{k+2}}(P_k,G)<2^t$ and there exists $2\leq i\leq k+1$ for which $(x_1,x_2)\sim (x_i,x_{i+1})$.
	
	\medskip
	
	\noindent \textbf{Claim 1.}
	\begin{equation}
		\gamma_{r,t}\leq \alpha_r \cdot \Delta_1 \cdot 2^t. \label{eqn:gammast first}
	\end{equation}
	\noindent \textbf{Proof of Claim 1.}
	If $(x_1,x_2,\dots ,x_{2k})\in X_1\times X_2\times X_1\times \dots \times X_2$ is a homomorphic $C_{2k}$ with $2^{r-1}\leq \hom_{x_1,x_{k+2}}(P_{k-1},G)<2^r$ and $2^{t-1}\leq \hom_{x_2,x_{k+2}}(P_k,G)<2^t$, then there are at most $\alpha_r$ ways to choose $(x_{k+2},x_{k+3},\dots,x_{2k},x_1)$, given such a choice there are at most $\Delta_1$ choices for $x_2$, and given these there are at most $2^t$ choices for $(x_3,\dots,x_{k+1})$. $\Box$
	
	\medskip
	
	\noindent \textbf{Claim 2.}
	\begin{equation}
		\gamma_{r,t}\leq \beta_t \cdot ks_2 \cdot 2^r. \label{eqn:gammast second}
	\end{equation}
	\noindent \textbf{Proof of Claim 2.}
	If $(x_1,x_2,\dots ,x_{2k})\in X_1\times X_2\times X_1\times \dots \times X_2$ is a homomorphic $C_{2k}$ with $2^{r-1}\leq \hom_{x_1,x_{k+2}}(P_{k-1},G)<2^r$ and $2^{t-1}\leq \hom_{x_2,x_{k+2}}(P_k,G)<2^t$, and there exists $2\leq i\leq k+1$ such that $(x_1,x_2)\sim (x_i,x_{i+1})$, then there are at most $\beta_t$ ways to choose $(x_2,\dots,x_{k+2})$. Given such a choice, there are at most $ks_2$ possibilities for $x_1$, since $(x_1,x_2)\sim (x_i,x_{i+1})$ for some $2\leq i\leq k+1$ and the pairs $(x_2,x_3),\dots,(x_{k+1},x_{k+2})$ are already fixed. Finally, there are at most $2^r$ ways to complete this to a suitable homomorphic copy of $C_{2k}$ by choosing $(x_{2k},x_{2k-1},\dots,x_{k+3})$. $\Box$
	
	\medskip
	
	The total number of homomorphic copies $(x_1,x_2,\dots ,x_{2k})\in X_1\times X_2\times X_1\times \dots \times X_2$ of $C_{2k}$ such that there exists $2\leq i\leq k+1$ with $(x_1,x_2)\sim (x_i,x_{i+1})$ is $\sum_{r,t\geq 1} \gamma_{r,t}$. We give an upper bound for this sum as follows. Let  $q$ be the integer for which $(\frac{ks_2 \hom(C_{2k},G)}{\Delta_1\hom(C_{2k-2},G)})^{1/2}\leq 2^q<2(\frac{ks_2\hom(C_{2k},G)}{\Delta_1\hom(C_{2k-2},G)})^{1/2}$.
	Now, using equations (\ref{eqn:gammast first}) and (\ref{eqn:alphahom}),
	\begin{align*}
	\sum_{r,t:t< r+q} \gamma_{r,t}&\leq \Delta_1\sum_{r,t:t< r+q} 2^t\alpha_r\leq \Delta_1 \sum_{r\geq 1} 2^{r+q}\alpha_r \leq \Delta_1 2^{q+1}\Hom(C_{2k-2},G) \\
	&\leq 4(ks_2 \Delta_1\hom(C_{2k-2},G)\hom(C_{2k},G))^{1/2}.
	\end{align*}
	Also, using equations (\ref{eqn:gammast second}) and (\ref{eqn:betahom}),
	\begin{align*}
	\sum_{r,t:t\geq r+q} \gamma_{r,t}
	&\leq ks_2 \sum_{r,t:t\geq r+q} 2^{r}\beta_t\leq ks_2 \cdot \sum_{t\geq 1} 2^{t-q+1}\beta_t\leq ks_2 \cdot 2^{-q+2}\Hom(C_{2k},G) \\
	&\leq 4(ks_2 \Delta_1\hom(C_{2k-2},G)\hom(C_{2k},G))^{1/2}.
	\end{align*}
	Thus, the number of homomorphic $2k$-cycles $(x_1,x_2,\dots ,x_{2k})\in X_1\times X_2\times X_1\times \dots \times X_2$ with $(x_1,x_2)\sim (x_i,x_{i+1})$ for some $2\leq i\leq k+1$ is
	$$\sum_{r,t\geq 1} \gamma_{r,t}\leq 8(ks_2 \Delta_1\hom(C_{2k-2},G)\hom(C_{2k},G))^{1/2}\leq 8(kM\hom(C_{2k-2},G)\hom(C_{2k},G))^{1/2}.$$
	By the symmetry of $X_1$ and $X_2$, the number of homomorphic $2k$-cycles $(x_1,x_2,\dots ,x_{2k})\in (X_1\times X_2\times X_1\times \dots \times X_2)\cup (X_2\times X_1\times X_2\times \dots \times X_1)$ with $(x_1,x_2)\sim (x_i,x_{i+1})$ for some $2\leq i\leq k+1$ is at most $16(kM\hom(C_{2k-2},G)\hom(C_{2k},G))^{1/2}$.
	The statement of the lemma follows by cyclic symmetry.
\end{proof}

To prove Lemma \ref{lemma:bipartite with 2l}, we use the next two inequalities.

\begin{lemma} \label{lemma:hom inequality}
	For any integer $\ell\geq 2$ and any non-empty graph $G$, we have
	$$\frac{\hom(C_{2\ell},G)}{\hom(C_{2\ell-2},G)}\geq \frac{\hom(C_{2\ell-2},G)}{\hom(C_{2\ell-4},G)}.$$
\end{lemma}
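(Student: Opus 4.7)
The plan is to reduce the inequality, rewritten as $\hom(C_{2\ell-2},G)^2\le \hom(C_{2\ell-4},G)\cdot \hom(C_{2\ell},G)$, to a single application of Cauchy--Schwarz. Let $A$ be the adjacency matrix of $G$ and write $W_m(x,y):=\hom_{x,y}(P_m,G)=(A^m)_{xy}$ for the number of walks of length $m$ from $x$ to $y$ in $G$; in particular $W_0(x,y)=\mathbf{1}[x=y]$, which is consistent with the convention $\hom(C_0,G)=|V(G)|$.

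The key ingredients are two walk-counting identities. First, for every $m\ge 1$, splitting a homomorphic copy of $C_{2m}$ at a pair of antipodal vertices gives
$$\hom(C_{2m},G)=\sum_{x,y\in V(G)} W_m(x,y)^2,$$
which is just the matrix identity $\mathrm{tr}(A^{2m})=\sum_{x,y}(A^m)_{xy}^2$. Second, for $\ell\ge 2$, splitting a homomorphic copy of $C_{2\ell-2}$ at two vertices lying at distances $\ell$ and $\ell-2$ along the cycle, i.e.\ decomposing $\mathrm{tr}(A^{2\ell-2})=\mathrm{tr}(A^\ell\cdot A^{\ell-2})$, gives
$$\hom(C_{2\ell-2},G)=\sum_{x,y\in V(G)} W_\ell(x,y)\,W_{\ell-2}(x,y).$$

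Cauchy--Schwarz applied to this inner product then yields
$$\hom(C_{2\ell-2},G)^2 \le \Big(\sum_{x,y}W_\ell(x,y)^2\Big)\Big(\sum_{x,y}W_{\ell-2}(x,y)^2\Big) = \hom(C_{2\ell},G)\cdot \hom(C_{2\ell-4},G),$$
and rearranging gives the lemma. There is no substantive obstacle: the only point that needs a moment's care is the boundary case $\ell=2$, where one verifies directly that $\sum_{x,y}W_0(x,y)^2=|V(G)|=\hom(C_0,G)$, so the identities and the resulting Cauchy--Schwarz bound remain valid under the stated convention for $C_0$.
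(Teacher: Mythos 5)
Your proof is correct, and it reaches the same Cauchy--Schwarz inequality $\hom(C_{2\ell-2},G)^2\le \hom(C_{2\ell},G)\hom(C_{2\ell-4},G)$ as the paper does, but via a different decomposition of the trace. The paper expands $\hom(C_{2j},G)=\mathrm{tr}(A^{2j})=\sum_i\lambda_i^{2j}$ using the spectrum of the adjacency matrix and applies Cauchy--Schwarz to the vectors $(\lambda_i^{\ell})_i$ and $(\lambda_i^{\ell-2})_i$; you instead expand entrywise, writing $\mathrm{tr}(A^{2\ell-2})=\sum_{x,y}(A^{\ell})_{xy}(A^{\ell-2})_{xy}$ as a Frobenius inner product of $A^{\ell}$ and $A^{\ell-2}$, and apply Cauchy--Schwarz there. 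The two are dual expansions of the same inner product, so the content is essentially identical, but your route is somewhat more elementary and self-contained: it needs only walk-counting and the symmetry of $A$, with no appeal to the spectral theorem or to the cited eigenvalue--trace formula, and it handles the paper's conventions $C_2=P_1$ and $\hom(C_0,G)=|V(G)|$ transparently (as you check for $\ell=2$). One residual point worth stating explicitly in either approach: the displayed form of the lemma involves ratios, so one should note that $G$ being non-empty forces $\hom(C_{2j},G)>0$ for all $j\ge 0$, which makes the rearrangement from the product form to the ratio form legitimate.
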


\begin{proof}
	Let $\lambda_1,\lambda_2,\dots,\lambda_n$ be the eigenvalues of $G$. It is well-known (see, for example, Corollary~1.3 in \cite{St13}) that
	$$\hom(C_{2j},G)=\sum_{i=1}^n \lambda_i^{2j}$$
	for every positive integer $j$. By the Cauchy-Schwartz inequality, we have
	$$\left(\sum_{i=1}^n \lambda_i^{2\ell}\right)\left(\sum_{i=1}^n \lambda_i^{2\ell-4}\right)\geq \left(\sum_{i=1}^n \lambda_i^{2\ell-2}\right)^2.$$
	Hence,
	$$\hom(C_{2\ell},G)\hom(C_{2\ell-4},G)\geq \hom(C_{2\ell-2},G)^2.$$
	The result follows after rearrangement.
\end{proof}

\begin{corollary} \label{cor:hom inequality}
	For any integers $k\geq 2$ and $0\leq \ell\leq k-1$ and any graph $G$,
	$$\hom(C_{2k-2},G)\leq \hom(C_{2\ell},G)^{\frac{1}{k-\ell}}\hom(C_{2k},G)^{1-\frac{1}{k-\ell}}.$$
\end{corollary}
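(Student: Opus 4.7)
The plan is to convert the corollary into a statement about log-convexity of the sequence $b_j := \hom(C_{2j},G)$ and then conclude by a telescoping argument. First I would dispose of the trivial case: if $G$ has no edges then $b_j = 0$ for all $j \geq 1$, and the inequality is either $0\leq 0$ (when $\ell \geq 1$) or $0\leq n^{1/k}\cdot 0$ (when $\ell = 0$), using the convention $0^0=1$ only when $\ell = k-1$, in which case both sides equal $b_{k-1}$. Similarly if $\ell = k-1$ the inequality is an equality. So I may assume $G$ is non-empty and $0\leq \ell \leq k-2$; in particular all $b_j>0$ and taking logarithms is legitimate.

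Set $c_j := \log b_j$. The desired inequality is equivalent, after taking logs and multiplying through by $k-\ell$, to
\begin{equation*}
c_k - c_\ell \;\leq\; (k-\ell)\bigl(c_k - c_{k-1}\bigr).
\end{equation*}
The key input is Lemma \ref{lemma:hom inequality}, which says that for every $j\geq 2$ one has $b_j/b_{j-1}\geq b_{j-1}/b_{j-2}$, i.e.\ the consecutive differences $c_j - c_{j-1}$ form a non-decreasing sequence in $j$ (for $j\geq 1$). Once this monotonicity is in hand, I would write the telescoping identity
\begin{equation*}
c_k - c_\ell \;=\; \sum_{j=\ell+1}^{k}\bigl(c_j - c_{j-1}\bigr),
\end{equation*}
and simply upper bound each of the $k-\ell$ summands by the largest one, $c_k - c_{k-1}$, which yields exactly the required bound.

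The only subtlety is that Lemma \ref{lemma:hom inequality} is stated for $\ell\geq 2$, so the monotonicity of differences it provides directly covers the ratios $c_j - c_{j-1}$ for $j\geq 2$. To cover the case $\ell=0$ (where the telescoping sum starts with $c_1-c_0$) I need $c_1 - c_0 \leq c_2 - c_1$, which is exactly Lemma \ref{lemma:hom inequality} applied at $\ell=2$ (using $\hom(C_0,G)=n$). Hence no extra ingredient is needed. I do not anticipate any real obstacle; the argument is essentially a one-line log-convexity manipulation, and the whole proof should take only a few lines.
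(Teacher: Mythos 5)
Your proposal is correct and is essentially the paper's own argument: the paper writes $\hom(C_{2k},G)/\hom(C_{2\ell},G)$ as a telescoping product of consecutive ratios and bounds each by the largest using Lemma \ref{lemma:hom inequality}, which is exactly your telescoping sum of differences after taking logarithms. Your extra care with the empty graph and with the first difference $c_1-c_0$ (via the lemma at $\ell=2$) is fine and consistent with the paper's conventions.
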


\begin{proof}
	By Lemma \ref{lemma:hom inequality}, we have $$\frac{\hom(C_{2k},G)}{\hom(C_{2\ell},G)}=\frac{\hom(C_{2k},G)}{\hom(C_{2k-2},G)}\dots \frac{\hom(C_{2\ell+2},G)}{\hom(C_{2\ell},G)}\leq \left(\frac{\hom(C_{2k},G)}{\hom(C_{2k-2},G)}\right)^{k-\ell}.$$
	The result follows after rearrangement.
\end{proof}

Lemma \ref{lemma:bipartite with 2l} follows immediately from Lemma \ref{lemma:most general} using Corollary \ref{cor:hom inequality}.

\begin{proof}[Proof of Lemma \ref{lemma:bipartite with vertices}]
	Define a symmetric binary relation $\sim'$ over $V^2$ by setting $(u,v)\sim' (z,w)$ if and only if $u\sim z$. By the assumption on $\sim$, for any $(u,v)\in V^2$ and $w\in X_1$, $w$ has at most $\Delta_1$ neighbours $z\in X_2$ and amongst them at most $s_1$ satisfies $u\sim z$, and hence $w$ has at most $s_1$ neighbours $z\in X_2$ with $(u,v)\sim' (z,w)$. By a nearly identical argument, for any $(u,v)\in V^2$ and $w\in X_2$, $w$ has at most $\Delta_2$ neighbours $z\in X_1$ and amongst them at most $s_2$ satisfies $(u,v)\sim' (z,w)$. Therefore, by Lemma \ref{lemma:bipartite with 2l}, the number of homomorphic $2k$-cycles $(x_1,x_2,\dots,x_{2k})\in (X_1\times X_2\times X_1\times \dots \times X_2)\cup (X_2\times X_1\times X_2\times \dots \times X_1)$ in $G$ such that $(x_i,x_{i+1})\sim' (x_j,x_{j+1})$ for some $i\neq j$ is at most $$32k^{3/2}M^{1/2}\hom(C_{2\ell},G)^{\frac{1}{2(k-\ell)}}\hom(C_{2k},G)^{1-\frac{1}{2(k-\ell)}}.$$ But this is the same as counting the homomorphic $2k$-cycles $(x_1,x_2,\dots,x_{2k})\in (X_1\times X_2\times X_1\times \dots \times X_2)\cup (X_2\times X_1\times X_2\times \dots \times X_1)$ in $G$ such that $x_i\sim x_j$ for some $i\neq j$, completing the proof.
\end{proof}

To deduce Lemma \ref{lemma:simple with vertices}, take $\ell=0$ and $X_1=X_2=V$ in Lemma \ref{lemma:bipartite with vertices} and note that $\hom(C_0,G)=|V(G)|$. Finally, Lemma~\ref{lemma:simple with edges} follows from applying Lemma \ref{lemma:bipartite with 2l} with $\ell=0$, $X_1=X_2=V$ and $\sim'$ instead of $\sim$ where $(u,v)\sim' (z,w)$ if and only if $uv\sim zw$.

\section{Finding a good $2k$-cycle} \label{sec:2kcycle}

In this section we prove a theorem which is a substantial generalization of the Bondy--Simonovits result $\ex(n,C_{2k})=O(n^{1+1/k})$ in that it allows us to find $2k$-cycles with extra properties provided our $n$-vertex graph has at least $Cn^{1+1/k}$ edges. The result will immediately imply Theorem~\ref{thm:rainbowC2k} and will be used to give a short proof of Theorem~\ref{thm:repeatedcycles}. Unlike the key lemmas from the previous section, this theorem is only useful when we want to find a cycle of fixed length, and accordingly it will not be used in the proofs of Theorem~\ref{thm:acyclicrainbow} and Theorem~\ref{thm:turanblowup}.

\begin{theorem} \label{thm:general 2kcycle}
	Let $k\geq 2$ and $s$ be positive integers. Then there exists a constant $C=C(k,s)$ with the following property. Suppose that $G=(V,E)$ is a graph with $n$ vertices and at least $Cn^{1+1/k}$ edges. Let $\sim$ be a symmetric binary relation on $V$ such that for every $u\in V$ and $v\in V$, $v$ has at most $s$ neighbours $w\in V$ which satisfy $u\sim w$. Let $\approx$ be a binary relation on $E$ such that for every $uv\in E$ and $w\in V$, $w$ has at most $s$ neighbours $z\in V$ which satisfy $uv\approx wz$. Then $G$ contains a $2k$-cycle $x_1x_2\dots x_{2k}$ such that $x_i\not \sim x_j$ for every $i\neq j$ and $x_ix_{i+1}\not \approx x_jx_{j+1}$ for every $i\neq j$ (where, as before, $x_{2k+1}:=x_1$).
\end{theorem}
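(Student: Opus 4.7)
The plan follows the sketch given in the overview. First I would apply a standard almost-regular-subgraph lemma to replace $G$ by a subgraph $G'$ on $m\leq n$ vertices whose average degree $d$ satisfies $d\geq c_1 m^{1/k}$, where the constant $c_1$ can be made arbitrarily large by choosing $C=C(k,s)$ large enough in the hypothesis, and whose maximum degree satisfies $\Delta(G')\leq Kd$ for an absolute constant $K$. It is enough to produce the required $2k$-cycle in $G'$, since any cycle good for $\sim$ and $\approx$ in $G'\subseteq G$ is automatically good in $G$.

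Next I would give a lower bound on $\hom(C_{2k},G')$. Using the standard spectral identity together with $\lambda_1(G')\geq d$, we have $\hom(C_{2k},G')=\mathrm{tr}(A_{G'}^{2k})\geq \lambda_1(G')^{2k}\geq d^{2k}$, and therefore
$$\frac{\hom(C_{2k},G')}{m\,\Delta(G')^k}\;\geq\;\frac{d^{2k}}{K^k m\, d^k}\;=\;\frac{d^k}{K^k m}\;\geq\;\left(\frac{c_1}{K}\right)^{\!k},$$
which by the previous step can be forced to exceed any prescribed constant.

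Finally I would bound the ``bad'' homomorphisms in three different ways. Lemma \ref{lemma:simple with edges} applied to the relation $\approx$ controls the homomorphic $2k$-cycles containing a $\approx$-conflicting pair of edges; Lemma \ref{lemma:simple with vertices} applied to $\sim$ controls those containing a $\sim$-conflicting pair of vertices; and Lemma \ref{lemma:simple with vertices} applied to the equality relation on $V$ (which fits the hypothesis with parameter $s=1$) controls the non-injective homomorphisms. Each bound has the shape
$$32k^{3/2}s^{1/2}\,\Delta(G')^{1/2}\,m^{\frac{1}{2k}}\,\hom(C_{2k},G')^{1-\frac{1}{2k}}.$$
Summing the three terms and requiring the total to be less than $\hom(C_{2k},G')/2$ is, after rearrangement, equivalent to a bound of the form $\hom(C_{2k},G')\geq C'(k,s)\cdot m\,\Delta(G')^k$, which holds once $C$ is taken sufficiently large by the previous step. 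Consequently more than half of the homomorphisms $C_{2k}\to G'$ are injective, $\sim$-free on vertices and $\approx$-free on edges, and any one of them gives the desired $2k$-cycle.

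The main obstacle is really only quantitative bookkeeping: one must chase through the constants from the regularization step, the spectral bound, and the three applications of the key lemmas to confirm that the final $C$ depends only on $k$ and $s$. Conceptually nothing new is needed beyond the key lemmas of Section~\ref{sec:mainlemma}, the spectral inequality $\hom(C_{2k},G)\geq \lambda_1^{2k}\geq d^{2k}$, and the remark that the equality relation on $V$ fits Lemma~\ref{lemma:simple with vertices} with $s=1$.
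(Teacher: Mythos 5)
Your proposal is correct and follows essentially the same route as the paper: regularize via a Jiang--Seiver-type lemma, lower-bound $\hom(C_{2k},G')$ by $\bar d^{2k}$ (the paper cites Sidorenko's lemma, which is the same spectral fact $\lambda_1\geq \bar d$ you use), and then beat the three bad-homomorphism counts from Lemmas \ref{lemma:simple with edges} and \ref{lemma:simple with vertices}, including the equality relation with $s=1$ for injectivity. The constant-chasing works out exactly as you indicate.
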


As we will see in a moment, this theorem follows fairly easily from the key lemmas in the previous section via a standard regularization argument. We will pass to a subgraph which has many homomorphic $2k$-cycles, but whose maximum degree is not too large.
The large number of homomorphism will be guaranteed by the following lemma, due to Sidorenko.

\begin{lemma}[Sidorenko \cite{Si92}] \label{lemma:sidorenko}
	Let $k\geq 2$ be an integer and let $G$ be a graph on $n$ vertices. Then $$\hom(C_{2k},G)\geq \left(\frac{2|E(G)|}{n}\right)^{2k}.$$
\end{lemma}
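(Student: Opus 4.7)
The plan is to establish this bound via the spectral decomposition of the adjacency matrix $A$ of $G$, with real eigenvalues $\lambda_1 \geq \lambda_2 \geq \dots \geq \lambda_n$. The argument has two standard inputs from linear algebra and fits in just a few lines; the main attraction is that nothing is delicate, so there is really no obstacle to overcome.

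First I would observe that a homomorphism $C_{2k} \to G$ is exactly a closed walk of length $2k$ in $G$, hence
\[ \hom(C_{2k}, G) \;=\; \operatorname{tr}(A^{2k}) \;=\; \sum_{i=1}^n \lambda_i^{2k}. \]
Since $2k$ is even, every summand on the right is non-negative, so in particular $\hom(C_{2k}, G) \geq \lambda_1^{2k}$. It then suffices to show that $\lambda_1 \geq 2|E(G)|/n$, i.e.\ that the spectral radius is at least the average degree. For this I would apply the Rayleigh quotient characterisation $\lambda_1 = \max_{x \neq 0} (x^\top A x)/(x^\top x)$ to the test vector $x = \mathbf{1}$, the all-ones vector in $\mathbb{R}^n$. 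Since $\mathbf{1}^\top A \mathbf{1} = \sum_{u,v} A_{uv} = 2|E(G)|$ and $\mathbf{1}^\top \mathbf{1} = n$, this yields
\[ \lambda_1 \;\geq\; \frac{\mathbf{1}^\top A \mathbf{1}}{\mathbf{1}^\top \mathbf{1}} \;=\; \frac{2|E(G)|}{n}. \]
Combining the two displays gives $\hom(C_{2k}, G) \geq \lambda_1^{2k} \geq (2|E(G)|/n)^{2k}$, which is exactly the claimed bound.

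For completeness, let me note an alternative route that avoids spectral theory altogether, which is closer to Sidorenko's original argument. One writes a closed walk of length $2k$ as the concatenation of a walk of length $k$ from $x$ to $y$ with one of length $k$ from $y$ back to $x$, giving the identity $\hom(C_{2k}, G) = \sum_{x,y \in V(G)} \hom_{x,y}(P_k, G)^2$. Applying the Cauchy--Schwarz inequality to this sum of $n^2$ squares yields $\hom(C_{2k}, G) \geq \frac{1}{n^2} \hom(P_k, G)^2$. It then remains to establish the Blakley--Roy-type bound $\hom(P_k, G) \geq n \cdot (2|E(G)|/n)^k$, which can be proved by a short induction on $k$ (the case $k=1$ being $\hom(P_1, G) = 2|E(G)|$, and the inductive step using Jensen's inequality applied to the degree distribution weighted by the number of walks of length $k-1$ ending at each vertex). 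Plugging this into the Cauchy--Schwarz bound recovers the inequality.
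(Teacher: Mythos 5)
Your spectral argument is correct and complete: counting homomorphisms $C_{2k}\to G$ as closed $2k$-walks gives $\hom(C_{2k},G)=\operatorname{tr}(A^{2k})=\sum_i\lambda_i^{2k}\geq\lambda_1^{2k}$, and the Rayleigh quotient at the all-ones vector gives $\lambda_1\geq 2|E(G)|/n\geq 0$, so raising to the $2k$-th power is legitimate. The paper does not prove this lemma at all -- it is quoted as a black-box result of Sidorenko -- so there is no in-paper proof to match; your argument is the standard one, and it is entirely consistent with the paper's own toolkit, since the identity $\hom(C_{2j},G)=\sum_i\lambda_i^{2j}$ is exactly what is invoked in the proof of Lemma~\ref{lemma:hom inequality}. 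One caveat about your ``alternative route'': the reduction via $\hom(C_{2k},G)=\sum_{x,y}\hom_{x,y}(P_k,G)^2$ and Cauchy--Schwarz is fine, and the Blakley--Roy bound $\hom(P_k,G)\geq n\,\bar d^{\,k}$ is indeed a true theorem, but your parenthetical claim that it follows from a short induction on $k$ with Jensen's inequality glosses over a genuine difficulty: the natural inductive step amounts to $\hom(P_k,G)\geq\bar d\cdot\hom(P_{k-1},G)$, i.e.\ $n\,W_k\geq W_1W_{k-1}$ in walk-count notation, and inequalities of this form are known to fail in general when the total length is odd (this is precisely why the odd case of Blakley--Roy is the historically hard one). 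Since that route is offered only as an aside and the cited inequality itself is correct, this does not affect the validity of your proof, but if you wanted to make the second route self-contained you would need a genuinely different proof of the Blakley--Roy step (e.g.\ via Sidorenko's theorem for trees or the original convexity argument of Mulholland--Smith).
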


The lemma equivalently states that $\hom(C_{2k},G)\geq \bar{d}^{2k}$, where $\bar{d}$ is the average degree of $G$. 

We say that a graph $G$ is $K$-almost regular if $\Delta(G)\leq K\delta(G)$. We will use the following lemma of Jiang and Seiver, which is a slight modification of a much earlier result by Erd\H os and Simonovits \cite{ES70}.

\begin{lemma}[Jiang--Seiver \cite{JS12}] \label{lemmaJS}
	Let $\e,c$ be positive reals, where $\e<1$ and $c\geq 1$. Let $n$ be a positive integer that is sufficiently large as a function of $\e$. Let $G$ be a graph on $n$ vertices with $e(G)\geq cn^{1+\e}$. Then $G$ contains a $K$-almost regular subgraph $G'$ on $m\geq n^{\frac{\e-\e^2}{2+2\e}}$ vertices such that $e(G')\geq \frac{2c}{5}m^{1+\e}$ and $K=20\cdot 2^{\frac{1}{\e^2}+1}$.
\end{lemma}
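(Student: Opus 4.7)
My plan is to follow the classical Erdős–Simonovits regularization strategy: repeatedly pass to a subgraph with smaller degree spread by deleting low-degree vertices and restricting to a dyadic degree class, then track constants carefully to match the stated bounds.

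First I would do a minimum-degree cleanup. Starting from $G$, iteratively delete any vertex of degree less than $\bar d/2$, where $\bar d = 2e(G)/n$. Since each deletion removes fewer than $\bar d/2$ edges, at termination we obtain $G_0$ with $e(G_0) \geq e(G)/2$, $|V(G_0)| \leq n$, and $\delta(G_0) \geq \bar d_0/2$ for the new average degree $\bar d_0$. This single step yields the factor $2c/5$ (rather than a tighter bound) because we want to leave slack for the further iterations that follow.

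Next I would iteratively control the maximum degree. Partition $V(G_0)$ into dyadic classes $V_j = \{v : 2^j \leq d_{G_0}(v) < 2^{j+1}\}$; there are at most $\lceil \log n \rceil$ non-empty classes. A weighted pigeonhole selects a class $V_{j^*}$ such that one of two things happens: either the induced subgraph $G_0[V_{j^*}]$ is already dense enough (relative to $|V_{j^*}|^{1+\e}$) and has all degrees in the original graph within a factor of $2$, in which case we are essentially done after one more min-degree cleanup on it; or $V_{j^*}$ gives a proper sub-problem on fewer vertices, but with the same density exponent $1+\e$ (possibly a slightly weaker constant) and a strictly smaller degree spread. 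Iterating this bucketing-and-restriction step shrinks the degree ratio by at least a factor of $2$ each round. After at most $O(1/\e^2)$ rounds the total degree ratio is bounded by $K = 20 \cdot 2^{1/\e^2 + 1}$; while the vertex count evolves (in the worst case) according to a recursion of the form $n_{i+1} \geq n_i^{(1-\e)/(1+\e)}$ times a constant, which composes to give $m \geq n^{(\e - \e^2)/(2+2\e)}$ after the requisite number of iterations.

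The main obstacle is the constant-tracking: ensuring that the density constant $c_i$ stays above $2c/5$ across all iterations, and that the recursion for $n_i$ genuinely yields the exponent $\frac{\e-\e^2}{2+2\e}$. The trick is that restricting to a dyadic class of size $|V_{j^*}|$ with all vertices of degree $\approx 2^{j^*}$ in $G_0$ actually improves the edge density $e(G_0[V_{j^*}])/|V_{j^*}|^{1+\e}$ whenever the degree $2^{j^*}$ is high enough relative to $|V_{j^*}|^{\e}$, which compensates the inevitable $O(\log n)$ pigeonhole loss. Verifying that these compensations compound correctly—so that the exact constants $2c/5$ and $K=20\cdot 2^{1/\e^2+1}$ come out at the end—is the only substantive computation. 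The hypothesis that $n$ is large relative to $\e$ is needed so that the $\log n$ factors are absorbed into the constant factor of $c$ at each step.
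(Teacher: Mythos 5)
This lemma is quoted in the paper from Jiang--Seiver \cite{JS12} (a refinement of Erd\H os--Simonovits \cite{ES70}); the paper gives no proof, so your proposal must stand on its own, and as written it has two genuine gaps.

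First, the dichotomy at the heart of your iteration does not work as stated. Bucketing $V(G_0)$ into dyadic classes $V_j$ by \emph{degree in $G_0$} and restricting to one class controls neither the number of edges inside that class nor the degrees \emph{within the induced subgraph}: all the edges of $G_0$ may run between different classes, and a vertex of $G_0$-degree about $2^{j}$ may have almost no neighbours inside $V_j$ (or inside whatever pair of classes a pigeonhole selects). Knowing that the host-graph degrees lie in $[2^{j},2^{j+1})$ plus a min-degree cleanup relative to the subgraph's own average degree does not give $\Delta(G')\leq K\delta(G')$ unless the subgraph's average degree is comparable to $2^{j}$, which nothing in your argument guarantees. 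The actual Erd\H os--Simonovits/Jiang--Seiver mechanism is different: either the few vertices of largest degree are incident to a constant fraction of the edges, in which case one passes to a much smaller subgraph whose density exponent \emph{strictly increases} (from $\e$ to roughly $\e+\Theta(\e^2)$), or deleting those high-degree vertices (and then low-degree vertices) yields the almost-regular subgraph directly.

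Second, the bookkeeping of the iteration cannot produce the stated bounds. You keep the exponent fixed at $1+\e$ and claim the degree spread halves each round, terminating after $O(1/\e^2)$ rounds; but with the exponent unchanged there is no mechanism forcing the spread to shrink geometrically (the spread can be as large as $n$, so this could take $\log n$ rounds, and the per-round $\log n$ pigeonhole losses then cannot be absorbed into the fixed constant $c$). The bound $K=20\cdot 2^{1/\e^2+1}$ comes precisely from the exponent-gain mechanism: since the density exponent can never exceed $2$, the number of recursions is at most about $1/\e^2$, and each recursion costs a bounded factor in regularity. Moreover, your vertex-count recursion is inconsistent with the target: composing $n_{i+1}\geq n_i^{(1-\e)/(1+\e)}$ over $\Theta(1/\e^2)$ rounds gives $m\geq n^{((1-\e)/(1+\e))^{\Theta(1/\e^2)}}$, an exponent exponentially small in $1/\e$, far below the claimed $n^{\frac{\e-\e^2}{2+2\e}}$. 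To repair the proof you need the exponent-increment dichotomy above, together with the observation that in each recursion the new vertex set is not too small because the dense spot must still accommodate $\Omega(n_i^{1+\e})$ edges; the constants $\frac{2c}{5}$ and $K$ then fall out of that accounting, not out of an initial min-degree cleanup.
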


We are now ready to prove Theorem \ref{thm:general 2kcycle}.

\begin{proof}[Proof of Theorem \ref{thm:general 2kcycle}]
	Let $C=C(k,s)$ be sufficiently large. By Lemma \ref{lemmaJS}, $G$ has a $K$-almost regular subgraph $G'$ on $m=\omega(1)$ vertices such that $e(G')\geq \frac{2C}{5}m^{1+1/k}$ and $K=O_k(1)$.
	$G'$ has average degree $\bar{d}\geq \frac{4C}{5}m^{1/k}$ and $\Delta(G')\leq K\bar{d}$, so by Lemma \ref{lemma:sidorenko}, we have
	$$\hom(C_{2k},G')\geq \bar{d}^{2k}\geq \frac{\bar{d}^k}{K^k}\Delta(G')^k\geq \left(\frac{4C}{5K}\right)^km\Delta(G')^k,$$
	and
	\begin{equation}
		\hom(C_{2k},G')^{\frac{1}{2k}}\geq \left(\frac{4C}{5K}\right)^{1/2}m^{\frac{1}{2k}}\Delta(G')^{1/2}. \label{eqn:bound on homc2k}
	\end{equation}
	
	\sloppy By Lemma \ref{lemma:simple with vertices} applied to the graph $G'$, the number of homomorphic $2k$-cycles $(x_1,x_2,\dots,x_{2k})$ in $G'$ such that $x_i\sim x_j$ for some $i\neq j$ is at most
	$$32k^{3/2}s^{1/2}\Delta(G')^{1/2}m^{\frac{1}{2k}}\hom(C_{2k},G')^{1-\frac{1}{2k}}.$$
	By Lemma \ref{lemma:simple with edges}, we have the same upper bound for the number of homomorphic $2k$-cycles $(x_1,x_2,\dots,x_{2k})$ in $G'$ such that $x_ix_{i+1}\approx x_jx_{j+1}$ for some $i\neq j$. Finally, Lemma \ref{lemma:simple with vertices} also implies that the number of homomorphic $2k$-cycles $(x_1,x_2,\dots,x_{2k})$ in $G'$ such that $x_i= x_j$ for some $i\neq j$ is at most
	$$32k^{3/2}\Delta(G')^{1/2}m^{\frac{1}{2k}}\hom(C_{2k},G')^{1-\frac{1}{2k}}.$$
	Indeed, we can apply that lemma with $\sim$ being the relation such that $u\sim v$ if and only if $u=v$, and take $s=1$.
	
	Combining our bounds, we see that the number of homomorphic $2k$-cycles $(x_1,x_2,\dots,x_{2k})$ in $G'$ such that $x_i=x_j$ for some $i\neq j$, $x_i\sim x_j$ for some $i\neq j$ or $x_ix_{i+1}\approx x_jx_{j+1}$ for some $i\neq j$ is at most
	\begin{equation*}
		96k^{3/2}s^{1/2}\Delta(G')^{1/2}m^{\frac{1}{2k}}\hom(C_{2k},G')^{1-\frac{1}{2k}},
	\end{equation*}
	which is at most
	\begin{equation}
		\left(\frac{5K}{4C}\right)^{1/2}96k^{3/2}s^{1/2}\hom(C_{2k},G') \label{eqn:bad cycles upper bound}
	\end{equation} by equation (\ref{eqn:bound on homc2k}). If $C$ is sufficiently large compared to $k$ and $s$, then this is less than $\hom(C_{2k},G')$ and therefore there exists a $2k$-cycle in $G'$ with the desired properties.
\end{proof}

\subsection{Rainbow cycles of length $2k$} \label{subsec:2kcycle}

We can quickly deduce Theorem \ref{thm:rainbowC2k}.

\begin{proof}[Proof of Theorem \ref{thm:rainbowC2k}]
	Let $C=C(k,1)$ be the constant provided by Theorem \ref{thm:general 2kcycle} and let $G$ be a properly edge coloured graph with $n$ vertices and at least $Cn^{1+1/k}$ edges. Define a symmetric binary relation on $E(G)$ by setting $e\approx f$ if $e$ and $f$ have the same colour. Since the colouring of $G$ is proper, for any $uv\in E(G)$ and $w\in V(G)$, there is at most one neighbour $z$ of $w$ with $uv\approx wz$. Thus, by Theorem \ref{thm:general 2kcycle} (applied with $\sim$ being the empty relation), $G$ contains a rainbow $2k$-cycle.
\end{proof}

\subsection{Colour-isomorphic cycles} \label{subsec:repeatedcycles}

In this subsection we prove Theorem \ref{thm:repeatedcycles}. Throughout the subsection, let $k$ and $r$ be fixed.

\begin{definition} \label{def:patternaux}
	Given an edge-colouring of $K_n$, define an auxiliary graph $\cG$ as follows. Let the vertex set of $\cG$ be the set of $r$-tuples of $V(K_n)$ consisting of pairwise distinct vertices. Now let $(u_1,\dots,u_r)$ and $(v_1,\dots,v_r)$ be joined by an edge if $\{u_1,\dots,u_r\}\cap \{v_1,\dots,v_r\}=\emptyset$ and the edges $u_iv_i$ have the same colour for all $1\leq i\leq r$.
\end{definition}

Call $r$-tuples $x$ and $y$ disjoint if no coordinate of $x$ is equal to any coordinate of $y$. We will prove that if $K_n$ is coloured with $o(n^{\frac{r}{r-1}\cdot \frac{k-1}{k}})$ colours, then there exists a $2k$-cycle in $\cG$ whose vertices are pairwise disjoint. Clearly, this implies that there exist $r$ colour-isomorphic, pairwise vertex-disjoint copies of $C_{2k}$.

\begin{lemma} \label{lemma:countmatchings}
	If $K_n$ is properly edge-coloured with $o(n^{\frac{r}{r-1}\cdot \frac{k-1}{k}})$ colours, then $e(\cG)=\omega(n^{r+r/k})$.
\end{lemma}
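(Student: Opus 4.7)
\textbf{Plan of proof for Lemma \ref{lemma:countmatchings}.} The strategy is to count the edges of $\cG$ color by color, and then apply convexity (Jensen's inequality) to exploit the fact that the number of colors is small. Let $q$ denote the number of colors used, so $q = o(n^{\frac{r}{r-1}\cdot \frac{k-1}{k}})$, and for each color $c$ let $m_c$ denote the number of edges of $K_n$ with color $c$. Because the coloring is proper, the edges of color $c$ form a matching, so their $2m_c$ endpoints are pairwise distinct.

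First I would derive a clean formula for $e(\cG)$. An edge of $\cG$ with common color $c$ is determined by choosing an ordered $r$-tuple of distinct edges from the color-$c$ matching together with an orientation of each such edge, modulo the simultaneous reversal of all orientations (which swaps the two endpoints of the edge in $\cG$). Since the edges of a matching are automatically vertex-disjoint, the disjointness condition on the two $r$-tuples is automatic. This gives
\[
e(\cG) \;=\; 2^{r-1}\sum_{c} m_c(m_c-1)\cdots(m_c-r+1).
\]

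Next I would lower bound the right-hand side. Expanding the falling factorial, $m_c(m_c-1)\cdots(m_c-r+1) \geq m_c^r - r^2 m_c^{r-1}$, so
\[
\sum_{c} m_c(m_c-1)\cdots(m_c-r+1) \;\geq\; \sum_{c} m_c^{r} \;-\; r^2 \sum_{c} m_c^{r-1}.
\]
Because $m_c \leq n/2$ and $\sum_c m_c = \binom{n}{2}$, the subtracted term is at most $r^2(n/2)^{r-2}\binom{n}{2} = O(n^{r})$, which is negligible compared to our target $n^{r+r/k}$. For the main term, convexity of $x \mapsto x^{r}$ gives
\[
\sum_{c} m_c^{r} \;\geq\; q\left(\frac{\sum_c m_c}{q}\right)^{r} \;=\; \frac{\binom{n}{2}^{r}}{q^{r-1}} \;=\; \Omega\!\left(\frac{n^{2r}}{q^{r-1}}\right).
\]

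Finally, plugging in $q = o(n^{\frac{r}{r-1}\cdot\frac{k-1}{k}})$ yields $q^{r-1} = o(n^{r(k-1)/k})$, hence $n^{2r}/q^{r-1} = \omega(n^{2r - r(k-1)/k}) = \omega(n^{r+r/k})$. Combining with the negligible error $O(n^r)$ and the prefactor $2^{r-1}$ gives $e(\cG) = \omega(n^{r+r/k})$, as required. The argument is essentially an accounting plus a one-line application of Jensen; the only mild subtlety is checking that the falling-factorial correction is absorbed, which is immediate from the matching bound $m_c \leq n/2$.
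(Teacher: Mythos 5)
Your proof is correct and follows essentially the same route as the paper: both arguments count monochromatic $r$-matchings colour class by colour class and apply convexity (the paper uses convexity of $\binom{x}{r}$ directly, while you use convexity of $x^r$ together with an explicit falling-factorial correction and an exact count of $e(\cG)$ with the $2^{r-1}$ ordering/orientation factor). The extra bookkeeping is harmless but not needed, since the paper only requires that each monochromatic $r$-matching injectively yields at least one edge of $\cG$.
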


\begin{proof}
	By the convexity of the function ${x \choose r}$, the number of monochromatic $r$-matchings in $K_n$ is $\omega\left(n^{\frac{r}{r-1}\cdot \frac{k-1}{k}}\cdot (n^{2-\frac{r}{r-1}\cdot \frac{k-1}{k}})^r\right)=\omega(n^{r+r/k})$. Any monochromatic $r$-matching gives rise to at least one edge in $\cG$ and any edge in $\cG$ is counted once by this, so the statement of the lemma follows.
\end{proof}

For the rest of the proof, we fix a proper edge-colouring of $K_n$ with $o(n^{\frac{r}{r-1}\cdot \frac{k-1}{k}})$ colours and define $\cG$ as above.
Since $\cG$ has $N:=n(n-1)\dots (n-r+1)$ vertices and  $\ex(N,C_{2k})=O(N^{1+1/k})$, it is already clear by Lemma \ref{lemma:countmatchings} that $\cG$ contains a copy of $C_{2k}$. Using Theorem~\ref{thm:general 2kcycle}, we will prove that this $C_{2k}$ can be chosen in a way that the vertices are pairwise disjoint.

The following simple lemma will be needed for making sure that the conditions of Theorem~\ref{thm:general 2kcycle} are satisfied with $s=r^2$.

\begin{lemma} \label{lemma:fewintersect}
	Let $x,y\in V(\cG)$. Then the number of $z\in V(\cG)$ such that $yz\in E(\cG)$ and $x$ and $z$ are not pairwise disjoint is at most $r^2$.
\end{lemma}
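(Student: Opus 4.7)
The plan is to show that for each of the $r^2$ index pairs $(i,j)\in [r]\times [r]$, there is at most one candidate $z$ that is a neighbour of $y$ in $\mathcal{G}$ and shares the coordinate $x_i=z_j$. Summing over $(i,j)$ then yields the bound $r^2$.

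Write $x=(x_1,\dots,x_r)$, $y=(y_1,\dots,y_r)$, and suppose $z=(z_1,\dots,z_r)$ is a vertex of $\mathcal{G}$ with $yz\in E(\mathcal{G})$ such that $x$ and $z$ are not disjoint. By assumption there exist indices $i,j\in [r]$ with $z_j=x_i$. I would fix such a pair $(i,j)$ and argue that $z$ is then uniquely determined. Indeed, the edge condition in Definition \ref{def:patternaux} says that all the edges $y_1z_1,\dots,y_rz_r$ share a common colour $c$. In particular, $c$ is the colour of the edge $y_jz_j=y_jx_i$ in $K_n$, which is a quantity depending only on $x,y$ and $(i,j)$. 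Now for every index $m\in [r]$, the vertex $z_m$ is a neighbour of $y_m$ in $K_n$ whose connecting edge has colour $c$; since the edge-colouring is proper, there is at most one such neighbour. Hence $z_m$ is forced for each $m$, and therefore at most one $z$ arises from a given pair $(i,j)$.

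Summing over all $r^2$ choices of $(i,j)$ gives the claimed bound. The main step, and the only thing that uses anything nontrivial, is the observation that the edge $yz\in E(\mathcal{G})$ is monochromatic \emph{with a colour forced by the coincidence $z_j=x_i$}; after that, properness of the colouring kills the remaining degrees of freedom. I do not anticipate any obstacle beyond keeping the indices straight.
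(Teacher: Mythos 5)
Your proof is correct and follows essentially the same route as the paper: the paper also enumerates $r$ choices for the shared coordinate of $x$ and $r$ choices determining the colour of the monochromatic matching (equivalently, your index $j$), and then uses properness of the colouring to conclude that $z$ is uniquely determined. Your parametrisation by the pair $(i,j)$ with $z_j=x_i$ is just a transparent rephrasing of the same counting.
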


\begin{proof}
	Since $x$ has $r$ coordinates, there are $r$ ways to specify which coordinate $v$ of $x$ will be a coordinate of $z$ too. Given this choice, there are $r$ ways to choose the colour of the monochromatic matching between $y$ and $z$ since it must be the colour of $uv$ for some coordinate $u$ of $y$. Given these two choices, $z$ is uniquely determined (if exists) since the colouring of $K_n$ is proper.
\end{proof}

We are now in a position to prove Theorem \ref{thm:repeatedcycles}.

\begin{proof}[Proof of Theorem \ref{thm:repeatedcycles}]
	Suppose that $K_n$ is properly edge-coloured with $o(n^{\frac{r}{r-1}\cdot \frac{k-1}{k}})$ colours.
	By Lemma~\ref{lemma:countmatchings}, we have $e(\cG)=\omega(N^{1+1/k})$, where $N=|V(\cG)|=n(n-1)\dots (n-r+1)$. Define a binary relation $\sim$ on $V(\cG)$ by setting $x\sim y$ if and only if $x$ and $y$ are not disjoint. By Lemma \ref{lemma:fewintersect}, the conditions of Theorem \ref{thm:general 2kcycle} are satisfied with $s=r^2$ ($\approx$ is chosen to be the empty relation). It follows that there is a $2k$-cycle in $\cG$ with pairwise disjoint vertices. This guarantees the existence of $r$ colour-isomorphic, pairwise vertex-disjoint copies of $C_{2k}$.
\end{proof}

\subsection{Finding good theta graphs}

Recall from the introduction that the theta graph $\theta_{k,t}$ is the union of $t$ paths of length $k$ which share the same endpoints but are pairwise internally vertex-disjoint. We can generalize Theorem \ref{thm:general 2kcycle} and find theta graphs without a conflicting pair of vertices or edges.

\begin{theorem} \label{thm:general theta}
	Let $k,t\geq 2$ and $s$ be positive integers. Then there exists a constant $C=C(k,t,s)$ with the following property. Suppose that $G=(V,E)$ is a graph with $n$ vertices and at least $Cn^{1+1/k}$ edges. Let $\sim$ be a symmetric binary relation on $V$ such that for every $u\in V$ and $v\in V$, $v$ has at most $s$ neighbours $w\in V$ which satisfy $u\sim w$. Let $\approx$ be a binary relation on $E$ such that for every $uv\in E$ and $w\in V$, $w$ has at most $s$ neighbours $z\in V$ which satisfy $uv\approx wz$. Then $G$ contains a subgraph isomorphic to $\theta_{k,t}$ which does not have two vertices related by $\sim$, nor two edges related by $\approx$.
\end{theorem}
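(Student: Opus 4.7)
The plan is to adapt the proof of Theorem~\ref{thm:general 2kcycle} to theta graphs. First, applying Lemma~\ref{lemmaJS}, I would reduce to a $K$-almost-regular subgraph $G'$ on $m$ vertices with $\bar d\geq (4C/5)m^{1/k}$ and $\Delta(G')\leq K\bar d$. Writing $W(a,b)=\hom_{a,b}(P_k,G')$, the Sidorenko-type lower bound
\[
\hom(\theta_{k,t},G')\;=\;\sum_{a,b}W(a,b)^t\;\geq\;m^{2-t}\bar d^{tk}
\]
follows from the power mean inequality applied to the trivial estimate $\sum_{a,b}W(a,b)=\hom(P_k,G')\geq m\bar d^k$ (the path case of Sidorenko, immediate from the convexity of $x\mapsto x^k$).

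Next I would upper bound the number of bad homomorphic $\theta_{k,t}$'s. The key structural observation is that any bad pair of vertices or edges in a bad hom $\theta_{k,t}$ lies on at most two of its $t$ constituent $k$-walks; selecting those two walks yields a bad hom $C_{2k}$ with endpoints $(a,b)$, while the remaining $t-2$ walks are arbitrary walks from $a$ to $b$. Writing $\mathrm{Bad}_{C_{2k}}(a,b)$ for the number of bad hom $C_{2k}$'s with endpoints $a,b$, this gives
\[
\#\text{bad hom }\theta_{k,t}\;\leq\;\binom{t}{2}\sum_{(a,b)}\mathrm{Bad}_{C_{2k}}(a,b)\cdot W(a,b)^{t-2}.
\]
To bound this weighted sum, I would adapt the dyadic proof of Lemma~\ref{lemma:most general}. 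In that proof, a bad hom $C_{2k}$ is parameterized by a distinguished edge $(x_1,x_2)$ and the quantities $\hom_{x_1,x_{k+2}}(P_{k-1},G')$ and $\hom_{x_2,x_{k+2}}(P_k,G')$ are dyadically split, with Claims~1 and~2 providing two competing counts balanced via AM--GM. For the theta-analogue one keeps this parameterization (via the first edge of walk~$1$) but also dyadically splits the $(a,b)$-pairs according to $W(a,b)$; the $W^{t-2}$ weight from the $t-2$ free walks is then carried through both claims, and the resulting three-parameter balance should yield a bound of the form
\[
\#\text{bad hom }\theta_{k,t}\;\leq\;O_{k,t,s}\!\left(\bar d^{1/2}\,m^{1/(2k)}\right)\hom(\theta_{k,t},G')^{1-1/(2k)}.
\]

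Combined with the lower bound from step one and the almost-regularity $\Delta\leq K\bar d$, this gives $\#\text{bad}/\hom(\theta_{k,t},G')\leq O_{k,t,s}\bigl((m^{1/k}/\bar d)^{(t-1)/2}\bigr)$, which is strictly less than $1$ for $C$ sufficiently large; as in Theorem~\ref{thm:general 2kcycle}, extending $\sim$ also to impose $x_i\neq x_j$ on distinct vertex positions of the theta graph ensures that the resulting good homomorphic copy is in fact an embedded $\theta_{k,t}$. The main obstacle is the third step: a direct H\"older or Cauchy--Schwarz estimate that peels off $W(a,b)^{t-2}$ before invoking Lemma~\ref{lemma:most general} loses too much, because the bad $C_{2k}$ count can be concentrated on pairs $(a,b)$ with atypically large $W(a,b)$ which dominate $\hom(\theta_{k,t},G')$. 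The cleanest remedy is to integrate the $W^{t-2}$ weight directly into the dyadic proof of Lemma~\ref{lemma:most general}, which is a bookkeeping adaptation rather than a new idea.
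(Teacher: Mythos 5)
Your overall architecture (regularize, lower-bound the number of homomorphic $\theta_{k,t}$'s, upper-bound the bad ones, compare) is reasonable, and your reduction
\[
\#\text{bad hom }\theta_{k,t}\;\leq\;\binom{t}{2}\sum_{(a,b)}\mathrm{Bad}_{C_{2k}}(a,b)\,W(a,b)^{t-2}
\]
is correct. But the proof stands or falls on the weighted estimate in your third step, and that estimate is asserted, not proved. You yourself identify it as ``the main obstacle'' and then dispose of it by claiming the dyadic argument of Lemma~\ref{lemma:most general} absorbs the weight $W(a,b)^{t-2}$ as ``a bookkeeping adaptation.'' It is not: in that proof the dyadic decomposition is taken with respect to $\hom_{x_1,x_{k+2}}(P_{k-1},G)$ and $\hom_{x_2,x_{k+2}}(P_k,G)$, i.e.\ with respect to the pair $(x_1,x_{k+2})$ and $(x_2,x_{k+2})$, whereas your weight is attached to the antipodal pair $(a,b)=(x_1,x_{k+1})$, which does not align with either. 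Claim~1 of that proof chooses $x_{k+1}$ only in its final step (among the ``$2^t$ choices for $(x_3,\dots,x_{k+1})$''), so the weight does not factor out of either claim; one would need a genuine three-parameter dyadic balance, and whether it lands exactly on the exponent $1-\tfrac{1}{2k}$ of $\hom(\theta_{k,t},G')$ that your final computation requires is exactly what needs to be proved. As written, the central inequality of your argument is a conjecture. (A smaller point: $\hom(P_k,G)\geq n\bar d^{\,k}$ is the Blakley--Roy inequality, not an immediate consequence of convexity of $x\mapsto x^k$ --- convexity gives $\sum_v d(v)^k$, which counts stars, not walks. The inequality is true and classical, so this is a misattribution rather than a gap.)

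The paper avoids the weighted count entirely. From the bound \eqref{eqn:bad cycles upper bound} one gets that fewer than $\tfrac{1}{t^2}\hom(C_{2k},G')$ homomorphic $2k$-cycles are bad; since $\hom(C_{2k},G')=\sum_{a,b}W(a,b)^2$ and the cycles through a fixed antipodal pair $(a,b)$ number $W(a,b)^2$, averaging yields a single pair $(x_1,x_{k+1})$ for which the \emph{proportion} of bad cycles among those through it is below $1/t^2$. Sampling $t$ independent uniform $k$-walks from $x_1$ to $x_{k+1}$, each of the $\binom{t}{2}$ pairwise unions is a uniform homomorphic $2k$-cycle through that pair, so by the union bound some sample has all pairwise unions good, and their union is the desired $\theta_{k,t}$. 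This uses only the already-proved $C_{2k}$ bound plus a two-line probabilistic argument, which is exactly the shortcut your approach is missing. If you want to salvage your route, I would replace step~3 by this averaging-and-sampling device rather than attempt the weighted dyadic lemma.
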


\begin{proof}
	The beginning of the proof is almost identical to that of Theorem \ref{thm:general 2kcycle}. We take a subgraph $G'$ as in Theorem \ref{thm:general 2kcycle}. Call a homomorphic $2k$-cycle $(x_1,x_2,\dots,x_{2k})$ in $G'$ \emph{bad} if $x_i=x_j$ for some $i\neq j$, $x_i\sim x_j$ for some $i\neq j$ or $x_ix_{i+1}\approx x_jx_{j+1}$ for some $i\neq j$.
	
	By (\ref{eqn:bad cycles upper bound}),
	$$\left(\frac{5K}{4C}\right)^{1/2}96k^{3/2}s^{1/2}\hom(C_{2k},G')$$ is again an upper bound for the number of bad homomorphic $2k$-cycles $(x_1,x_2,\dots,x_{2k})$ in $G'$. Let $C$ be large enough such that this is less than $\frac{1}{t^2}\hom(C_{2k},G')$. By averaging, it follows that there exist vertices $x_1,x_{k+1}\in V$ such that among all homomorphic $2k$-cycles of the form $(x_1,x_2,\dots,x_k,x_{k+1},x_{k+2},\dots x_{2k})$ in $G'$, less than $1/t^2$ proportion are bad. Fix these vertices.
	Sample uniformly at random with replacement $t$ walks in $G'$ from $x_1$ to $x_{k+1}$: call them $x_1y_2^iy_3^i\dots y_k^ix_{k+1}$ for $1\leq i\leq t$. Note that for any $i\neq j$, $(x_1,y_2^{i},\dots ,y_k^{i},x_{k+1},y_k^{j},\dots,y_2^j)$ is a uniformly random homomorphic $2k$-cycle of the form $(x_1,\dots ,x_{k+1},\dots)$ in $G'$, so by the choice of $x_1$ and $x_{k+1}$, the probability that it is bad is less than $1/t^2$. By the union bound, with positive probability $(x_1,y_2^{i},\dots ,y_k^{i},x_{k+1},y_k^{j},\dots,y_2^j)$ is not bad for any $i\neq j$. But then the union of the walks $x_1y_2^iy_3^i\dots y_k^ix_{k+1}$ for $1\leq i\leq t$ provides a subgraph isomorphic to $\theta_{k,t}$ which has no pair of vertices related by $\sim$ and which has no pair of edges related by $\approx$.
\end{proof}

It is clear that Theorem \ref{thm:general theta} implies Theorem \ref{thm:rainbowtheta}.

\section{Rainbow cycles of arbitrary length} \label{sec:acyclic}

In this section we prove Theorem \ref{thm:acyclicrainbow}. We will use the following lemma.

\begin{lemma} \label{lemma:many2kcycles}
	Let $k\geq 2$ be an integer and let $G$ be a properly edge-coloured graph on $n$ vertices. If $\hom(C_{2k},G)> 64^{2k}k^{3k}n\Delta(G)^k$, then $G$ contains a rainbow $C_{2k}$.
\end{lemma}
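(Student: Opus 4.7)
The plan is to apply the two simple key lemmas from Section~\ref{sec:mainlemma} to count the homomorphic $2k$-cycles in $G$ that fail to be rainbow copies of $C_{2k}$, and then show that the hypothesis forces this count to be strictly less than $\hom(C_{2k},G)$, leaving at least one rainbow homomorphic $2k$-cycle whose image must therefore be a rainbow $C_{2k}$.

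First, I would bound the non-injective homomorphic $2k$-cycles using Lemma~\ref{lemma:simple with vertices} applied to $G$ with the relation $u\sim v \iff u=v$. Here the parameter $s$ can be taken to be $1$ (given $u$, a vertex $v$ has at most one neighbour $w$ with $w=u$, namely $w=u$ itself, provided $u\in N(v)$). This gives an upper bound of $32k^{3/2}\Delta(G)^{1/2}n^{1/(2k)}\hom(C_{2k},G)^{1-1/(2k)}$ on homomorphic $2k$-cycles with a repeated vertex.

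Next, I would bound the non-rainbow homomorphic $2k$-cycles using Lemma~\ref{lemma:simple with edges} applied to $G$ with the relation $e\sim f \iff$ $e$ and $f$ have the same colour. Since the colouring is proper, for any edge $uv$ and any vertex $w$ there is at most one neighbour $z$ of $w$ such that $wz$ has the same colour as $uv$, so again $s=1$ works. This yields the same upper bound $32k^{3/2}\Delta(G)^{1/2}n^{1/(2k)}\hom(C_{2k},G)^{1-1/(2k)}$ on homomorphic $2k$-cycles containing two edges of the same colour.

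Summing, the total number of homomorphic $2k$-cycles in $G$ that either have a repeated vertex or two edges of the same colour is at most
$$64k^{3/2}\Delta(G)^{1/2}n^{1/(2k)}\hom(C_{2k},G)^{1-1/(2k)}.$$
It therefore suffices to verify that under the hypothesis this quantity is strictly less than $\hom(C_{2k},G)$; equivalently, after rearranging and raising to the $2k$-th power, that $64^{2k}k^{3k}n\Delta(G)^k<\hom(C_{2k},G)$, which is precisely the assumption. Hence some homomorphic $2k$-cycle $(x_1,\dots,x_{2k})$ is injective and has all edges of distinct colours, and its image is a rainbow copy of $C_{2k}$ in $G$. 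No single step looks like a real obstacle; the only thing to be careful about is bookkeeping the factor of $2$ when adding the two contributions and checking that the exponent $2k$ translates the bound correctly into the hypothesis.
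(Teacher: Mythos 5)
Your proposal is correct and follows essentially the same route as the paper: both apply Lemma~\ref{lemma:simple with vertices} (with $u\sim v\iff u=v$, $s=1$) and Lemma~\ref{lemma:simple with edges} (same colour, $s=1$ by properness), sum the two bounds to $64k^{3/2}\Delta(G)^{1/2}n^{1/(2k)}\hom(C_{2k},G)^{1-1/(2k)}$, and observe that the hypothesis is exactly the $2k$-th power of the inequality needed to make this less than $\hom(C_{2k},G)$. No gaps.
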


\begin{proof}
	\sloppy By Lemma \ref{lemma:simple with edges}, the number of homomorphic $2k$-cycles $(x_1,\dots,x_{2k})$ in $G$ for which there exist $i\neq j$ such that $x_ix_{i+1}$ and $x_jx_{j+1}$ have the same colour is at most $32k^{3/2}\Delta(G)^{1/2}n^{\frac{1}{2k}}\hom(C_{2k},G)^{1-\frac{1}{2k}}$. By Lemma \ref{lemma:simple with vertices}, we have the same upper bound for those homomorphic $2k$-cycles $(x_1,\dots,x_{2k})$ for which there exist $i\neq j$ and $x_i=x_j$. Hence, all but at most $64k^{3/2}\Delta(G)^{1/2}n^{\frac{1}{2k}}\hom(C_{2k},G)^{1-\frac{1}{2k}}$ homomorphic $2k$-cycles provide a rainbow $2k$-cycle. But by assumption $\hom(C_{2k},G)^{\frac{1}{2k}}> 64k^{3/2}n^{\frac{1}{2k}}\Delta(G)^{1/2}$, so there exists a homomorphic $2k$-cycle which gives a rainbow subgraph isomorphic to $C_{2k}$. 
\end{proof}

Using Lemma \ref{lemma:many2kcycles}, one can show that for any $K$ there exists a constant $C$ such that any properly edge-coloured $K$-almost regular graph on $n$ vertices with at least $Cn(\log n)^3$ edges contains a rainbow cycle. Unfortunately, we think that it is not possible to find a $O(1)$-almost regular subgraph on $m=\omega(1)$ vertices with $\omega(m(\log m)^3)$ edges in an arbitrary $n$-vertex graph with $\omega(n(\log n)^3)$ edges. The next two lemmas give us a suitable subgraph for which Lemma~\ref{lemma:many2kcycles} is applied, but we lose a $\log n$ factor on the way, that is why we need  $Cn(\log n)^4$ edges in Theorem~\ref{thm:acyclicrainbow}.

\begin{lemma} \label{lemma:biregularstep1}
	Let $d$ be sufficiently large and let $G$ be a graph on $n$ vertices with average degree $d$. Then there exists a non-empty bipartite subgraph $G'$ of $G$ with parts $X$ and $Y$ such that $e(G')\geq |X|\cdot \frac{\Delta(G')}{80}$ and $e(G')\geq |Y|\cdot \frac{d}{10\log n}$.
\end{lemma}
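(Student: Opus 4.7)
The approach is the standard pipeline for producing an almost-biregular bipartite subgraph: a random bipartition, a low-degree cleanup, and then a dyadic slicing on one side. The random bipartition costs a factor $2$ in edges, the cleanup an additive $nd/O(\log n)$, and the dyadic slicing a factor $\log n$---this is where the single $\log n$ loss in the statement comes from.

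First I would choose a bipartition $(A,B)$ of $V(G)$ so that the bipartite subgraph $H \subseteq G$ satisfies $e(H) \geq e(G)/2 = nd/4$, which exists by the usual first-moment argument. Then I iteratively delete any vertex whose current degree drops below $d/(20\log n)$; at most $n$ vertices are ever removed and each costs fewer than $d/(20\log n)$ edges, so the resulting bipartite graph $H_1$ has $e(H_1) \geq nd/4 - nd/(20\log n) \geq nd/5$ for $n$ large, while every surviving vertex has $d_{H_1} \geq d/(20\log n)$. Next I dyadically partition the $A$-side of $H_1$: set $A_i := \{v \in A\cap V(H_1) : d_{H_1}(v) \in [2^i, 2^{i+1})\}$. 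Since degrees lie in $[d/(20\log n), n]$, the number of non-empty classes is at most $2\log n$, and pigeonhole provides an index $i^\ast$ with $e(A_{i^\ast}, B\cap V(H_1)) \geq e(H_1)/(2\log n) \geq nd/(10\log n)$. I set $X := A_{i^\ast}$, $Y := B\cap V(H_1)$, and $G' := H_1[X\cup Y]$. Since $|Y| \leq n$ and $e(G') \geq nd/(10\log n)$, the required inequality $e(G') \geq |Y| \cdot d/(10\log n)$ is immediate. Moreover every $x \in X$ has $d_{G'}(x) = d_{H_1}(x) \in [2^{i^\ast}, 2^{i^\ast+1})$, so the $X$-side of $G'$ is $2$-almost regular and $\Delta_X(G') \leq 2\bar d_X(G')$. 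If additionally $|X| \leq 80 \cdot 2^{i^\ast}$, then $\Delta_Y(G') \leq |X| \leq 80\bar d_X(G')$ as well, and $e(G') \geq |X|\cdot \Delta(G')/80$ follows at once.

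\textbf{Main obstacle.} The delicate case is $|X| > 80 \cdot 2^{i^\ast}$, where a single ``celebrity'' $y \in Y$ could a priori have up to $|X|$ neighbours in $X$, blowing up $\Delta(G')$ well past $80\bar d_X(G')$ and breaking the first required inequality. The natural remedy is to prune from $Y$ every vertex of degree larger than, say, $80 \cdot 2^{i^\ast+1}$, but this may cost many edges when the offending $y$'s together account for most of $e(G')$. The cleanest way to resolve this is a dichotomy: either the pruning discards at most a constant fraction of the edges (in which case the pruned $G'$ still meets both inequalities, up to fiddling with the constants $80$ and $10$), or else most of $e(G')$ sits on a small set $Y_{\mathrm{hi}} \subseteq Y$ of size $|Y_{\mathrm{hi}}| \leq |X|/80$, in which case one passes to the dense sub-bipartite graph between $X$ and $Y_{\mathrm{hi}}$ and \emph{swaps the roles of the two sides}, making the former celebrities the new almost-regular side. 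Verifying that one of the two branches always produces a $G'$ satisfying both conditions, while tracking the constants carefully so that the exact thresholds in the statement come out, is the technical heart of the argument.
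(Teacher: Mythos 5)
The first two-thirds of your argument (random bipartition, minimum-degree cleanup to $d/(20\log n)$, dyadic slicing of the $A$-side, and the easy case $|X|\leq 80\cdot 2^{i^*}$) is fine, and the inequality $e(G')\geq |Y|\cdot d/(10\log n)$ does come out. But the ``main obstacle'' you identify is exactly where the proof has to happen, and the dichotomy you propose does not close it. In the second branch, the set $Y_{\mathrm{hi}}=\{y: d_{G'}(y)>T\}$ is only bounded \emph{below} in degree; its degrees can range anywhere from $T$ up to $|X|$, so it is not an ``almost-regular side'' after the swap. Concretely, if $|X|=n/2$, $2^{i^*}\approx d/(40\log n)$, one vertex of $Y_{\mathrm{hi}}$ has degree $|X|$ and the rest have degree about $T$, then $e(G'[X,Y_{\mathrm{hi}}])\approx |Y_{\mathrm{hi}}|T$ while the required bound $|Y_{\mathrm{hi}}|\Delta/80=|Y_{\mathrm{hi}}||X|/80$ is larger by a factor of roughly $|X|/T$. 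Repairing this forces a second dyadic decomposition, now of $Y_{\mathrm{hi}}$, which costs another factor of $\log n$ in edges; after restricting to a dyadic class $Y_j$, the old $X$-vertices (which become the new $Y$-side) need not retain degree $d/(10\log n)$ into $Y_j$, so the second inequality is lost. There is no obvious way to terminate this back-and-forth without accumulating extra logarithmic losses, so as written the argument has a genuine gap.

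The paper avoids the celebrity problem altogether by sorting by degree \emph{before} bipartitioning, rather than bipartitioning at random. Take $A$ to be the $\lceil n/2\rceil$ vertices of largest degree and $B$ the rest; then every vertex of $B$ has degree at most $2d$. (A preliminary reduction lets one assume every subgraph of $G$ has average degree at most $d$, which rules out $e(G[A])$ being too large.) If $e(G[B])\geq e(G)/10$ one splits $B$ itself into two parts and both sides have maximum degree at most $2d$, so the claim is immediate. Otherwise most edges cross between $A$ and $B$; one discards the $A$-vertices with at most $d/20$ neighbours in $B$ and dyadically slices the rest. The chosen class $A_i$ has all degrees above $d/20$, which forces $2^{i+1}>d/20$ and hence $\Delta(G')\leq\max(2^{i+1},2d)\leq 40\cdot 2^{i+1}\leq 80\cdot 2^{i}$. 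In other words, the side that plays the role of $Y$ is by construction the low-degree half of $G$, so its maximum degree $2d$ is automatically comparable to the lower bound $d/20$ on the $X$-side degrees, and no second decomposition is ever needed. That is the idea your proposal is missing.
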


\begin{proof}
	By passing to a suitable subgraph, we may, without loss of generality, assume that every subgraph of $G$ has average degree at most $d$. Indeed, if $\tilde{G}$ is a subgraph with average degree $\tilde{d}>d$ and $\tilde{n}$ vertices, then $\frac{\tilde{d}}{\log \tilde{n}}>\frac{d}{\log n}$.
	
	Let $A$ be the set consisting of the $\lceil n/2\rceil$ largest degree vertices in $G$ and let $B=V(G)\setminus A$.
	
	Suppose first that $e(G\lbrack B\rbrack)\geq \frac{e(G)}{10}$. Then we may partition $B$ into sets $X$ and $Y$ such that $e(G\lbrack X,Y\rbrack)\geq \frac{e(G)}{20}=\frac{nd}{40}$. Let $G'=G\lbrack X,Y\rbrack$. Any vertex in $B$ has degree at most $\frac{2e(G)}{\lceil n/2\rceil}=\frac{nd}{\lceil n/2\rceil}\leq 2d$ in $G$, so $\Delta(G')\leq 2d$. Since $|X|,|Y|\leq n/2$, $G'$ satisfies the conditions in the lemma.
	
	Hence, we may assume that $e(G\lbrack B\rbrack)< \frac{e(G)}{10}$. Suppose that $e(G\lbrack A\rbrack)\geq \frac{6e(G)}{10}$. Then $G\lbrack A\rbrack$ has larger average degree than $G$, which is a contradiction. Thus, $e(G\lbrack A\rbrack)< \frac{6e(G)}{10}$ and so $e(G\lbrack A,B\rbrack)\geq \frac{3e(G)}{10}$.
	
	Let $A_{\mathrm{low}}=\{x\in A: |N_G(x)\cap B|\leq \frac{d}{20}\}$ and let $A'=A\setminus A_{\mathrm{low}}$. Clearly, $e(G\lbrack A_{\mathrm{low}},B\rbrack)\leq n\frac{d}{20}=\frac{e(G)}{10}$, so $e(G\lbrack A',B\rbrack)\geq \frac{e(G)}{5}$. For $0\leq i\leq \lfloor \log n \rfloor$, let $A_i=\{x\in A': 2^i\leq |N_G(x)\cap B|< 2^{i+1}\}$. The sets $A_i$ partition $A'$, so there exists some $i$ such that $e(G\lbrack A_i,B\rbrack)\geq \frac{e(G\lbrack A',B\rbrack)}{\log n +1}\geq \frac{e(G)}{10\log n}=\frac{nd}{20\log n}\geq |B|\cdot \frac{d}{10\log n}$.
	
	Let $X=A_i$, $Y=B$ and $G'=G\lbrack X,Y\rbrack$. The last inequality from the previous paragraph gives that $e(G')\geq |Y|\cdot \frac{d}{10\log n}$. Since every $x\in A_i$ has $\frac{d}{20}< d_{G'}(x)<2^{i+1}$, we have $\frac{d}{20}<2^{i+1}$. But every $y\in B$ has $d_{G'}(y)\leq d_G(y)\leq 2d$, so $\Delta(G')\leq 40\cdot 2^{i+1}$. However, for every $x\in A_i$, we have $d_{G'}(x)\geq 2^{i}$, so $e(G')\geq |X|\cdot 2^{i}\geq |X|\cdot \frac{\Delta(G')}{80}$.
\end{proof}

\begin{lemma} \label{lemma:biregularstep2}
	Let $d$ be sufficiently large and let $G$ be a graph on $n$ vertices with average degree $d$. Then there exists a non-empty bipartite subgraph $G''$ of $G$ with parts $X$ and $Y$ such that for every $x\in X$, we have $d_{G''}(x)\geq \frac{\Delta(G'')}{160}$ and for every $y\in Y$, we have $d_{G''}(y)\geq \frac{d}{20\log n}$.
\end{lemma}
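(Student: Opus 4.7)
The plan is to start from the bipartite subgraph $G'$ produced by Lemma~\ref{lemma:biregularstep1} and obtain $G''$ by iteratively peeling off vertices whose degree in the current graph is below the target threshold. Concretely, set $G_0 = G'$ and, as long as there exists either an $x \in X \cap V(G_i)$ with $d_{G_i}(x) < \Delta(G_i)/160$ or a $y \in Y \cap V(G_i)$ with $d_{G_i}(y) < d/(20\log n)$, delete such a vertex and set $G_{i+1}$ to be the resulting induced subgraph. The process terminates after at most $|V(G')|$ steps; let $G''$ be the final graph. By the stopping rule, every surviving $x \in X$ satisfies $d_{G''}(x) \geq \Delta(G'')/160$ and every surviving $y \in Y$ satisfies $d_{G''}(y) \geq d/(20\log n)$, so the only real task is to certify that $G''$ is non-empty.

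This follows from a clean edge-counting argument. Since $G_i$ is a subgraph of $G'$, we have $\Delta(G_i) \leq \Delta(G')$, so each time an $X$-vertex is deleted strictly fewer than $\Delta(G')/160$ edges are lost. Summed over all $X$-deletions the total is strictly less than $|X|\cdot \Delta(G')/160 \leq e(G')/2$, using the inequality $e(G') \geq |X|\Delta(G')/80$ supplied by Lemma~\ref{lemma:biregularstep1}. Symmetrically, every $Y$-deletion costs strictly fewer than $d/(20\log n)$ edges, so the cumulative $Y$-contribution is strictly less than $|Y|\cdot d/(20\log n) \leq e(G')/2$ by the inequality $e(G') \geq |Y|\cdot d/(10\log n)$. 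Adding the two, the total number of edges removed is strictly less than $e(G')$, and therefore $G''$ retains at least one edge. In particular $G''$ is non-empty and meets the stated degree conditions on both sides.

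There is no real obstacle beyond ensuring the constants fit. Lemma~\ref{lemma:biregularstep1} was designed so that the two ``average'' degree bounds on $X$ and $Y$ each have slack by a factor of $2$ relative to the pointwise thresholds we want; that slack is exactly what allows the strict inequalities in the peeling step to combine into a total loss strictly below $e(G')$. The choice to compare the $X$-threshold to $\Delta(G_i)$ (rather than to the static $\Delta(G')$) is harmless in the accounting because $\Delta(G_i)\leq \Delta(G')$, while at the same time it guarantees that the surviving graph satisfies the required condition $d_{G''}(x) \geq \Delta(G'')/160$ in terms of its own maximum degree.
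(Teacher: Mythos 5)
Your proposal is correct and follows essentially the same route as the paper: peel off low-degree vertices from the bipartite graph supplied by Lemma~\ref{lemma:biregularstep1} and certify non-emptiness by bounding the total edge loss by $|X|\Delta(G')/160 + |Y|d/(20\log n) \leq e(G')$. The only (harmless) difference is that you compare $X$-degrees to the current maximum degree $\Delta(G_i)$ rather than the static $\Delta(G')$ as the paper does; since $\Delta(G_i)\leq \Delta(G')$ the accounting is unaffected, and the paper recovers the same conclusion via $d_{G''}(x)\geq \Delta(G')/160\geq \Delta(G'')/160$.
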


\begin{proof}
	By Lemma \ref{lemma:biregularstep1}, we may choose a non-empty bipartite subgraph $G'$ with parts $X'$ and $Y'$ such that $e(G')\geq |X'|\cdot \frac{\Delta(G')}{80}$ and $e(G')\geq |Y'|\cdot \frac{d}{10\log n}$. Now perform the following simple algorithm: as long as there is a vertex in $X'$ which has degree less than $\frac{\Delta(G')}{160}$ in the current graph, or there is a vertex in $Y'$ which has degree less than $\frac{d}{20\log n}$ in the current graph, then discard one such vertex. Let the final graph be $G''$ and let its parts be $X$ and $Y$. Clearly we have $d_{G''}(x)\geq \frac{\Delta(G')}{160}\geq \frac{\Delta(G'')}{160}$ for every $x\in X$ and $d_{G''}(y)\geq \frac{d}{20\log n}$ for every $y\in Y$. Finally, $G''$ is non-empty since the number of edges discarded by the algorithm is less than $|X'|\cdot \frac{\Delta(G')}{160}+|Y'|\cdot \frac{d}{20\log n}\leq e(G')$.
\end{proof}

Now we prove that the subgraph we find by Lemma \ref{lemma:biregularstep2} has many homomorphic $C_{2k}$'s.

\begin{lemma} \label{lemma:homcycles}
	Let $G$ be a bipartite graph with parts $X$ and $Y$ such that $d(x)\geq s$ for every $x\in X$ and $d(y)\geq t$ for every $y\in Y$. Then, for every positive integer $k$,
	$$\Hom(C_{2k},G)\geq s^kt^k.$$
\end{lemma}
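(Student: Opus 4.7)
The plan is to use the classical identity
\[
\Hom(C_{2k}, G) = \sum_{u, v \in V(G)} \Hom_{u,v}(P_k, G)^2,
\]
obtained by cutting a $2k$-cycle at two antipodal vertices into two walks of length $k$ sharing both endpoints $u$ and $v$. Because $G$ is bipartite, the summand vanishes unless $u$ and $v$ lie in the same part (when $k$ is even) or in opposite parts (when $k$ is odd), so I would split the sum accordingly and apply the Cauchy--Schwarz inequality $\sum_i a_i^2 \geq N^{-1}(\sum_i a_i)^2$ to each piece.

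For every $u \in X$, a walk of length $k$ starting at $u$ alternates between the two sides, with each $X \to Y$ step offering at least $s$ continuations and each $Y \to X$ step offering at least $t$. This gives $\sum_{v} \Hom_{u,v}(P_k, G) \geq s^{\lceil k/2 \rceil} t^{\lfloor k/2 \rfloor}$ uniformly over $u \in X$, and a symmetric bound from $Y$. When $k$ is even, both endpoints lie in the same part, and the Cauchy--Schwarz pair-set has size $|X|^2$ (or $|Y|^2$); the $|X|$ (or $|Y|$) factor coming from summing over starting vertices cancels cleanly against the $N^{-1}$, giving $\Hom(C_{2k}, G) \geq 2 s^k t^k$.

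The main obstacle is the case $k$ odd, where the Cauchy--Schwarz pair-set has size $|X||Y|$, a quantity not controlled by the hypotheses. The fix is to use reversibility: a walk of length $k$ from $X$ to $Y$ reversed is a walk of length $k$ from $Y$ to $X$, so $\Hom_{X \to Y}(P_k, G) = \Hom_{Y \to X}(P_k, G)$ and therefore admits both the lower bounds $|X| s^{(k+1)/2} t^{(k-1)/2}$ (summed from $X$) and $|Y| t^{(k+1)/2} s^{(k-1)/2}$ (summed from $Y$). Multiplying these two bounds and then dividing by $|X||Y|$ makes the vertex-class sizes cancel, again yielding $\Hom(C_{2k}, G) \geq 2 s^k t^k$. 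The degenerate case $k = 1$ can be handled in one line using only $e(G) \geq s|X|$, $e(G) \geq t|Y|$ and $e(G) \leq |X||Y|$, which together force $e(G) \geq st$.
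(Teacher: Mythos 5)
Your proposal is correct and follows essentially the same route as the paper: split the homomorphic $2k$-cycle at two antipodal vertices, apply Cauchy--Schwarz to $\sum \hom_{u,v}(P_k,G)^2$, and lower-bound the walk counts step by step using the degree hypotheses. The only cosmetic difference is in the odd case, where the paper assumes WLOG $|X|s\geq|Y|t$ and uses the single bound $\Hom(P_k,G)\geq|X|s^{(k+1)/2}t^{(k-1)/2}$, whereas you take the geometric mean of the two directional walk counts so that $|X||Y|$ cancels symmetrically --- an equivalent and equally valid computation.
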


\begin{proof}
	If $k$ is even, then $\Hom(P_k,G)\geq |X|s^{k/2}t^{k/2}$. Hence,
	\begin{align*}
		\Hom(C_{2k},G)
		&\geq \sum_{x,x'\in X} \hom_{x,x'}(P_{k},G)^2\geq \frac{1}{|X|^2}\left(\sum_{x,x'\in X} \hom_{x,x'}(P_{k},G)\right)^2\geq  \left(\frac{\Hom(P_k,G)}{|X|}\right)^2 \\
		&\geq s^kt^k.
	\end{align*}
	
	Now suppose that $k$ is odd. Without loss of generality, we may assume that $|X|s\geq |Y|t$. Note that $\Hom(P_k,G)\geq |X|s^{\frac{k+1}{2}}t^{\frac{k-1}{2}}$. Hence,
	\begin{align*}
		\Hom(C_{2k},G)
		&\geq \sum_{x\in X,y\in Y} \hom_{x,y}(P_{k},G)^2\geq \frac{1}{|X||Y|}\left(\sum_{x\in X,y\in Y} \hom_{x,y}(P_{k},G)\right)^2\geq  \frac{\Hom(P_k,G)^2}{|X||Y|} \\
		&\geq \frac{|X|}{|Y|}s^{k+1}t^{k-1}\geq s^kt^k.
	\end{align*}
	\\[-2\baselineskip]
\end{proof}

\begin{lemma} \label{lemma:cyclesinG''}
	Let $d$ be sufficiently large and let $G$ be a graph on $n$ vertices with average degree $d$. Then there exists a non-empty bipartite subgraph $G''$ of $G$ such that for every positive integer $k$,
	$$\Hom(C_{2k},G'')\geq \left(\frac{d}{20\log n}\right)^k\left(\frac{\Delta(G'')}{160}\right)^k.$$
\end{lemma}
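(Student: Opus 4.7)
The plan is to simply combine Lemma \ref{lemma:biregularstep2} and Lemma \ref{lemma:homcycles} directly; there is really no new work to do here. First I would apply Lemma \ref{lemma:biregularstep2} to $G$ (using that $d$ is sufficiently large) to obtain a non-empty bipartite subgraph $G''$ with parts $X$ and $Y$ satisfying the minimum-degree conditions $d_{G''}(x)\geq \Delta(G'')/160$ for every $x\in X$ and $d_{G''}(y)\geq d/(20\log n)$ for every $y\in Y$.

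Then I would feed $G''$ into Lemma \ref{lemma:homcycles} with $s=\Delta(G'')/160$ and $t=d/(20\log n)$. The conclusion of that lemma is
\[
\Hom(C_{2k},G'')\geq s^k t^k = \left(\frac{\Delta(G'')}{160}\right)^k\left(\frac{d}{20\log n}\right)^k,
\]
which is exactly the bound in the statement. This works uniformly for every positive integer $k$, since Lemma \ref{lemma:homcycles} holds for every $k$.

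There is no real obstacle. The only thing to double-check is that the two sides of the bipartition match up correctly between the two lemmas (i.e.\ that whichever part plays the role of $X$ in Lemma \ref{lemma:biregularstep2} is allowed to play the role of $X$ in Lemma \ref{lemma:homcycles}), but since Lemma \ref{lemma:homcycles} is symmetric in its roles up to relabelling, this is immediate. Thus the proof is essentially a one-line citation of the two preceding lemmas.
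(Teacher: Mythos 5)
Your proposal is correct and is exactly the paper's proof: the paper's own argument for Lemma \ref{lemma:cyclesinG''} is the one-line observation that it follows immediately from Lemma \ref{lemma:biregularstep2} combined with Lemma \ref{lemma:homcycles}, with the same choices $s=\Delta(G'')/160$ and $t=d/(20\log n)$. Your remark about the roles of the two sides of the bipartition matching up is also fine, since Lemma \ref{lemma:homcycles} treats the two parts symmetrically.
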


\begin{proof}
	This follows immediately from Lemma \ref{lemma:biregularstep2} and Lemma \ref{lemma:homcycles}.
\end{proof}

\begin{proof}[Proof of Theorem \ref{thm:acyclicrainbow}]
	Let $n$ be sufficiently large and let $G$ be a properly edge-coloured graph on $n$ vertices with at least $Cn(\log n)^4$ edges, where $C=2^{100}$. Let $k=\lfloor \log n\rfloor$.
	
	By Lemma \ref{lemma:cyclesinG''}, $G$ has a non-empty bipartite subgraph $G''$ such that
	$$\Hom(C_{2k},G'')\geq \left(\frac{C}{10}(\log n)^3\right)^k\left(\frac{\Delta(G'')}{160}\right)^k\geq 2^{50k} k^{3k} \Delta(G'')^k> 64^{2k} k^{3k} n\Delta(G'')^k.$$
	Then, by Lemma \ref{lemma:many2kcycles}, $G''$ contains a rainbow cycle. It has even length because $G''$ is bipartite.
\end{proof}

\section{Blow-ups of cycles} \label{sec:blownupcycles}

In this section we prove Theorem \ref{thm:turanblowup} and Theorem \ref{thm:turanblowup2k}.

\begin{definition} \label{def:blowupaux}
	Given a graph $G$, define an auxiliary graph $\cG_0$ as follows. Let the vertex set of $\cG_0$ be the set of $r$-vertex subsets of $V(G)$, i.e. let $V(\cG_0)=V(G)^{(r)}$. Now let $U$ and $V$ be joined by an edge in $\cG_0$ if $U\cap V=\emptyset$ and $uv\in E(G)$ for every $u\in U$ and $v\in V$.
\end{definition}

For the rest of the proof, we fix a positive integer $r$ and a graph $G$, and define $\cG_0$ as above.
In order to find a copy of $C_{2k}[r]$ in $G$, we need to find a copy of $C_{2k}$ in $\cG_0$ in which the vertices are disjoint as subsets of $V(G)$.
The next lemma will be used for making sure that we can apply Lemma \ref{lemma:bipartite with vertices} with $s_1$ and $s_2$ not too large.

\begin{lemma} \label{lemma:blowupfewintersect}
	Let $x,y\in V(\cG_0)$. Then the number of $z\in V(\cG_0)$ such that $yz\in E(\cG_0)$ and $z\cap x\neq \emptyset$ is at most $r^{r+1}d_{\cG_0}(y)^{1-1/r}$.
\end{lemma}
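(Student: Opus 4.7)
My plan is to unpack the definition of $\cG_0$ so that both $d_{\cG_0}(y)$ and the quantity we want to bound become simple binomial coefficients in the common neighbourhood of $y$, and then to extract the required $1-1/r$ exponent from the inequality $\binom{N}{r-1}\leq r\binom{N+1}{r}^{1-1/r}$.

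Concretely, first I would set $W=\bigcap_{u\in y}N_G(u)\setminus y$, the set of vertices of $G$ adjacent to every element of $y$ and disjoint from $y$. By the very definition of the auxiliary graph in Definition~\ref{def:blowupaux}, the neighbours of $y$ in $\cG_0$ are precisely the $r$-subsets of $W$, so
\[
d_{\cG_0}(y)=\binom{|W|}{r}.
\]
In particular, I may assume $|W|\geq r$, else $d_{\cG_0}(y)=0$ and there is nothing to prove.

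Next, I would count neighbours $z$ of $y$ with $z\cap x\neq\emptyset$. Such a $z$ is an $r$-subset of $W$ containing at least one vertex of $x$, so by a simple union bound over the (at most $r$) vertices $v\in x\cap W$,
\[
\#\{z : yz\in E(\cG_0),\ z\cap x\neq\emptyset\}\leq r\binom{|W|-1}{r-1}.
\]
Finally I would convert this bound into the desired form. Using the identity $\binom{|W|-1}{r-1}=\tfrac{r}{|W|}\binom{|W|}{r}$ together with the crude estimate $\binom{|W|}{r}^{1/r}\leq |W|$, one gets
\[
\binom{|W|-1}{r-1}=\frac{r\,\binom{|W|}{r}^{1/r}}{|W|}\,\binom{|W|}{r}^{1-1/r}\leq r\,d_{\cG_0}(y)^{1-1/r},
\]
which combined with the previous display even yields the stronger bound $r^2 d_{\cG_0}(y)^{1-1/r}$, well within the claimed $r^{r+1}d_{\cG_0}(y)^{1-1/r}$.

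The proof is really only a calculation once one views neighbours of $y$ in $\cG_0$ as $r$-subsets of a fixed set $W$, so no step is a genuine obstacle; the only mildly delicate point is making sure the ``$z\cap y=\emptyset$'' condition is folded into the definition of $W$ (so that the count of neighbours of $y$ is cleanly $\binom{|W|}{r}$), after which the bound follows from the standard $\binom{N}{r-1}\leq r\binom{N+1}{r}^{1-1/r}$-type inequality.
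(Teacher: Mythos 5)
Your argument is correct and follows essentially the same route as the paper: a union bound over which element of $x$ lies in $z$ (a factor $r$), then counting the neighbours of $y$ containing a fixed vertex $v$ as $\binom{|W|-1}{r-1}$ and converting this to $d_{\cG_0}(y)^{1-1/r}$ via a standard binomial estimate. Your bookkeeping even yields the slightly sharper constant $r^2$ in place of $r^{r+1}$, which is of no consequence for the application.
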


\begin{proof}
	There are $r$ ways to choose the element of $x$ that should belong to $z$, so it suffices to prove that for any $v\in V(G)$, the number of neighbours of $y$ in $\cG_0$ that contain $v$ is at most $r^rd_{\cG_0}(y)^{1-1/r}$. Let $d$ be the size of the common neighbourhood (in $G$) of the vertices in $y$. We may assume that $v$ belongs to this common neighbourhood. Now there are ${d-1 \choose r-1}$ ways to choose the $r-1$ vertices in $z$ that are different from $v$. Since ${d-1 \choose r-1}\leq r^r{d \choose r}^{1-1/r}= r^rd_{\cG_0}(y)^{1-1/r}$, the proof is complete.
\end{proof}

The next lemma is the only place in the paper where we apply Lemma \ref{lemma:bipartite with vertices} with $\ell\neq 0$. It turns out that taking $\ell=1$ here leads to a slightly better bound for $\ex(n,C_{2k}[r])$.

\begin{lemma} \label{lemma:blowupcountbad}
	Let $k\geq 2$ be an integer and let $\cG$ be a bipartite subgraph of $\cG_0$ on $m$ vertices with parts $X_1$ and $X_2$ such that every $x\in X_1$ has $d_{\cG_0}(x)\leq D_1$ and every $x\in X_2$ has $d_{\cG_0}(x)\leq D_2$, where $D_1\leq D_2$. Then the number of homomorphic copies of $C_{2k}$ in $\cG$ in which the vertices are not pairwise disjoint (as subsets of $V(G)$) is at most
	\begin{enumerate}[label=(\roman*)]
		\item $32k^{3/2}r^{\frac{r+1}{2}} (D_1^{1-1/r}D_2)^{1/2}m^{\frac{1}{2k}}\hom(C_{2k},\cG)^{1-\frac{1}{2k}}$, and also at most
		\item $32k^{3/2}r^{\frac{r+1}{2}} (D_1^{1-1/r}D_2)^{1/2}(mD_2)^{\frac{1}{2k-2}}\hom(C_{2k},\cG)^{1-\frac{1}{2k-2}}$.
	\end{enumerate}
\end{lemma}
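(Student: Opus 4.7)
The plan is to apply Lemma \ref{lemma:bipartite with vertices} to $\cG$ with the symmetric relation $U\sim W$ defined by $U\cap W\neq\emptyset$ (viewing $U,W$ as $r$-subsets of $V(G)$). A homomorphic $C_{2k}$ in $\cG$ has vertices that are not pairwise disjoint if and only if $x_i\sim x_j$ for some $i\neq j$, so this is exactly what the key lemma counts. For the two bounds, I would invoke the key lemma with two different choices of $\ell$: $\ell=0$ for part (i) and $\ell=1$ for part (ii).

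First I would verify the hypotheses of Lemma \ref{lemma:bipartite with vertices}. Since $\cG\subseteq\cG_0$, every $v\in X_1$ has at most $D_1$ neighbours in $\cG$ and every $v\in X_2$ has at most $D_2$, so one may take $\Delta_1=D_1$ and $\Delta_2=D_2$. For any $u\in V(\cG)$ and $v\in X_1$, Lemma \ref{lemma:blowupfewintersect} (applied with $x=u$, $y=v$) tells us that at most $r^{r+1}d_{\cG_0}(v)^{1-1/r}\leq r^{r+1}D_1^{1-1/r}$ neighbours $w$ of $v$ in $\cG_0$, and hence in $\cG$, satisfy $u\cap w\neq\emptyset$; this gives $s_1\leq r^{r+1}D_1^{1-1/r}$, and symmetrically $s_2\leq r^{r+1}D_2^{1-1/r}$. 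The assumption $D_1\leq D_2$ gives $(D_1/D_2)^{1/r}\leq 1$, which rearranges to $D_1 D_2^{1-1/r}\leq D_2 D_1^{1-1/r}$, so
$$M=\max(\Delta_1 s_2,\Delta_2 s_1)\leq r^{r+1}D_1^{1-1/r}D_2, \qquad M^{1/2}\leq r^{(r+1)/2}(D_1^{1-1/r}D_2)^{1/2}.$$

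For part (i), I would apply the lemma with $\ell=0$. Using $\hom(C_0,\cG)=|V(\cG)|=m$, the conclusion of Lemma \ref{lemma:bipartite with vertices} is precisely the claimed bound $32k^{3/2}r^{(r+1)/2}(D_1^{1-1/r}D_2)^{1/2}m^{\frac{1}{2k}}\hom(C_{2k},\cG)^{1-\frac{1}{2k}}$. For part (ii), I would apply the lemma with $\ell=1$. Here $\hom(C_2,\cG)=2e(\cG)$, and since every vertex of $\cG$ has degree at most $\max(D_1,D_2)=D_2$, we have $2e(\cG)\leq mD_2$. Substituting $\hom(C_2,\cG)^{\frac{1}{2(k-1)}}\leq (mD_2)^{\frac{1}{2k-2}}$ into the conclusion of the key lemma yields exactly the bound in (ii).

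There is no real obstacle beyond careful bookkeeping: one must identify the correct relation, recognise that the maximum in $M$ is achieved by $\Delta_2 s_1$ (this is where the exponent $1-1/r$ on $D_1$ appears), and observe that the slightly sharper bound (ii) comes from trading $\hom(C_0,\cG)=m$ for the estimate $\hom(C_2,\cG)\leq mD_2$. The rest is a direct substitution into Lemma \ref{lemma:bipartite with vertices} using Lemma \ref{lemma:blowupfewintersect}.
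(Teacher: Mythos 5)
Your proposal is correct and follows essentially the same route as the paper: define $\sim$ by non-disjointness, use Lemma \ref{lemma:blowupfewintersect} to get $\Delta_1=D_1$, $\Delta_2=D_2$, $s_i=r^{r+1}D_i^{1-1/r}$, note $M=r^{r+1}D_1^{1-1/r}D_2$ via $D_1\leq D_2$, and apply Lemma \ref{lemma:bipartite with vertices} with $\ell=0$ for (i) and $\ell=1$ (together with $\hom(C_2,\cG)\leq mD_2$) for (ii). No gaps.
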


\begin{proof}
	Define a symmetric binary relation $\sim$ over $V(\cG)$ by taking $x\sim y$ if and only if $x\cap y\neq \emptyset$. By Lemma \ref{lemma:blowupfewintersect}, the conditions of Lemma \ref{lemma:bipartite with vertices} are satisfied with $\Delta_1=D_1$, $\Delta_2=D_2$, $s_1=r^{r+1}D_1^{1-1/r}$ and $s_2=r^{r+1}D_2^{1-1/r}$. Note that $\max(\Delta_1 s_2,\Delta_2 s_1)=r^{r+1}D_1^{1-1/r}D_2$. Hence, (i) follows from applying Lemma \ref{lemma:bipartite with vertices} with $\ell=0$ and noting that $\hom(C_0,\cG)=|V(\cG)|=m$. Moreover, (ii) follows from applying Lemma \ref{lemma:bipartite with vertices} with $\ell=1$ and noting that $\hom(C_2,\cG)\leq |V(\cG)|\Delta(\cG)\leq mD_2$.
\end{proof}

Now we want to find a bipartite subgraph $\cG$ in $\cG_0$ which has many homomorphic cycles but whose vertices have not too large degree in $\cG_0$.

\begin{lemma} \label{lemma:blowupbiregular}
	Let $\cG_0$ have average degree $d>0$. Then there exist $D_1,D_2\geq \frac{d}{4}$ and a non-empty bipartite subgraph $\cG$ in $\cG_0$ with parts $X_1$ and $X_2$ such that for every $x\in X_1$, we have $d_{\cG}(x)\geq \frac{D_1}{256r^2(\log n)^2}$ and $d_{\cG_0}(x)\leq D_1$, and for every $x\in X_2$, we have $d_{\cG}(x)\geq \frac{D_2}{256r^2(\log n)^2}$ and $d_{\cG_0}(x)\leq D_2$.
\end{lemma}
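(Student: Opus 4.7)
The plan is to adapt the two-step regularization of Lemma \ref{lemma:biregularstep2}, but with a \emph{two-sided} dyadic decomposition to accommodate the fact that both parts of $\cG$ need a minimum-degree guarantee scaled by their own upper degree $D_i$. Throughout write $m := |V(\cG_0)|$; since every vertex of $\cG_0$ is an $r$-subset of $V(G)$, we have $m \leq n^r$ and hence $\log m \leq r \log n$.

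First I would pass to the subgraph $\cG_0^*$ of $\cG_0$ obtained by iteratively discarding every vertex whose current $\cG_0$-degree is below $d/4$. Since this removes at most $m \cdot (d/4) = e(\cG_0)/2$ edges, $\cG_0^*$ retains at least $md/4$ edges and every remaining vertex has $\cG_0$-degree at least $d/4$. I would then partition $V(\cG_0^*)$ into dyadic classes $B_i := \{v : 2^{i-1} < d_{\cG_0}(v) \leq 2^i\}$, of which there are only $\log m + O(1) = O(r \log n)$ many. A pigeonhole over the $O(r^2 (\log n)^2)$ unordered pairs of classes produces indices $i^* \leq j^*$ such that an $\Omega(1/(r \log n)^2)$-fraction of the edges of $\cG_0^*$ have one endpoint in $B_{i^*}$ and the other in $B_{j^*}$ (taking a random bipartition of $B_{i^*}$ if $i^* = j^*$, at a further cost of a factor of $2$). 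Setting $D_1 := 2^{i^*}$, $D_2 := 2^{j^*}$, $X_1' := B_{i^*}$, and $X_2' := B_{j^*}$, the resulting bipartite subgraph $H'$ has $\Omega(md/(r \log n)^2)$ edges; by construction $D_1, D_2 \geq d/4$ and $d_{\cG_0}(v) \in (D_i/2, D_i]$ for every $v \in X_i'$.

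Finally, I would iteratively delete any $x \in X_1'$ with current degree below $D_1/(256 r^2 (\log n)^2)$ and any $y \in X_2'$ with current degree below $D_2/(256 r^2 (\log n)^2)$, taking $\cG$ to be the graph that remains. The crucial estimate is that $d_{\cG_0}(v) > D_i/2$ for all $v \in X_i'$ gives
$$|X_i'| D_i \;\leq\; 2 \sum_{v \in X_i'} d_{\cG_0}(v) \;\leq\; 4\, e(\cG_0) \;=\; 2md,$$
so the total number of edges discarded in this trimming step is at most $4md/(256 r^2 (\log n)^2)$. This is smaller than $e(H')/2$ once one checks $(\log m + O(1))^2 \ll r^2 (\log n)^2$, which is immediate from $\log m \leq r \log n$. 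Hence $\cG$ is non-empty, satisfies the required min-degree bounds on each side by construction, and inherits $d_{\cG_0}(x) \leq D_i$ from $X_i' \subseteq B_{i^*}$ (or $B_{j^*}$). The main obstacle is purely bookkeeping: one must check that the two $O(r^2 (\log n)^2)$-losses (from the pigeonhole, and from the degree threshold) merely \emph{add} rather than multiply to reach the claimed $256 r^2 (\log n)^2$, which is precisely why the trimming threshold is taken proportional to the pigeonhole loss rather than much smaller.
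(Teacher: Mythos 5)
Your proof is correct and follows essentially the same strategy as the paper's: restrict to vertices of $\cG_0$-degree at least $d/4$, pigeonhole the remaining edges over pairs of dyadic degree classes (losing an $O(r^2(\log n)^2)$ factor), and then trim low-degree vertices to convert the average-degree guarantee into a minimum-degree one. The only differences are cosmetic, namely the order in which the bipartition and the dyadic pigeonhole are performed and the bookkeeping of constants, both of which you handle correctly.
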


\begin{proof}
	Let $N$ and $e$ denote the number of vertices and edges in $\cG_0$, respectively. Observe that the number of edges in $\cG_0$ incident to vertices of degree at most $d/4$ is at most $Nd/4=e/2$. Hence, a random partitioning of all vertices with degree at least $d/4$ shows that there exist disjoint sets $A$ and $B$ in $V(\cG_0)$ such that for every $v\in A\cup B$ we have $d_{\cG_0}(v)\geq d/4$ and the number of edges in $\cG_0[A,B]$ is at least $e/4$. For each $1\leq i\leq \lceil r\log n\rceil$, let $A_i=\{v\in A: 2^{i-1}\leq d_{\cG_0}(v)<2^{i}\}$ and let $B_i=\{v\in B: 2^{i-1}\leq d_{\cG_0}(v)<2^{i}\}$. Note that the $A_i$'s partition $A$. Indeed, $\Delta(\cG_0)\leq {n \choose r}\leq n^r$. Similarly, the $B_i$'s partition $B$. Hence, there exist $i,j$ such that $e(\cG_0[A_i,B_j])\geq \frac{e}{4\lceil r\log n\rceil^2}\geq \frac{e}{16r^2(\log n)^2}$.
	
	Note that $|A_i|2^{i-1}\leq 2e(\cG_0)=2e$, so $|A_i|\leq \frac{2e}{2^{i-1}}$. Thus, the average degree of the vertices in $A_i$ in the graph $\cG_0[A_i,B_j]$ is at least $\frac{2^{i-1}}{32r^2(\log n)^2}$. Similarly, the average degree of the vertices in $B_j$ in the same graph is at least $\frac{2^{j-1}}{32r^2(\log n)^2}$. Thus, by a standard vertex removal argument, there exist non-empty $X_1\subset A_i$ and $X_2\subset B_j$ such that for $\cG=\cG_0[X_1,X_2]$, we have $d_{\cG}(x)\geq \frac{2^{i-1}}{128r^2(\log n)^2}$ for every $x\in X_1$ and $d_{\cG}(x)\geq \frac{2^{j-1}}{128r^2(\log n)^2}$ for every $x\in X_2$. Take $D_1=2^{i}$ and $D_2=2^j$. Since $d/4\leq d_{\cG_0}(v)<2^{i}$ holds for every $v\in X_1\subset A$, we have $D_1>d/4$. Similarly, $D_2>d/4$.
\end{proof}

The following supersaturation result guarantees that $\cG_0$ has enough edges, and is our final ingredient to the proof of Theorem \ref{thm:turanblowup}.

\begin{lemma}[Erd\H os--Simonovits \cite{ESi83}] \label{lemma:supersaturation}
	There exist positive constants $c=c(r),\gamma=\gamma(r)$ such that any graph on $n$ vertices with $e>c\cdot n^{2-\frac{1}{r}}$ edges contains at least $\gamma \frac{e^{r^2}}{n^{2r^2-2r}}$ copies of $K_{r,r}$.
\end{lemma}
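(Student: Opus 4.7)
The plan is to apply the standard Kővári–Sós–Turán-type double counting argument with convexity invoked twice. This is the classical proof of supersaturation for $K_{r,r}$, and it works cleanly in the regime $e \gtrsim n^{2-1/r}$.

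First I would count the number $N$ of pairs $(v, S)$ with $v \in V(G)$, $S \in \binom{V(G)}{r}$, and $v$ adjacent in $G$ to every vertex of $S$. Summing over $v$ and using convexity of $x \mapsto \binom{x}{r}$,
$$N = \sum_{v \in V(G)} \binom{d(v)}{r} \geq n\binom{2e/n}{r}.$$
Provided $c = c(r)$ is sufficiently large, $2e/n \geq c n^{1-1/r}$ exceeds $2r$, and this lower bound simplifies to $\gamma_1 e^{r}/n^{r-1}$ for some $\gamma_1(r) > 0$. On the other hand, summing by $S$, $N = \sum_{S \in \binom{V(G)}{r}} d(S)$, where $d(S)$ is the number of common neighbours of $S$ in $G$. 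Hence the average of $d(S)$ over $r$-sets $S$ is at least $D := \gamma_2 e^{r}/n^{2r-1}$ for a suitable $\gamma_2(r) > 0$.

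Next I would apply convexity a second time: since $e \geq c n^{2-1/r}$ forces $D \geq \gamma_2 c^{r} \geq 2r$ once $c$ is large enough,
$$\sum_{S \in \binom{V(G)}{r}} \binom{d(S)}{r} \geq \binom{n}{r}\binom{D}{r} \geq \gamma_3 n^{r} D^{r} \geq \gamma \frac{e^{r^{2}}}{n^{2r^{2}-2r}}.$$
Each summand counts ordered pairs $(S, T)$ of $r$-subsets of $V(G)$ with $S \times T \subseteq E(G)$, and any such pair automatically satisfies $S \cap T = \emptyset$ (a shared vertex would force a self-loop), so it yields a labelled copy of $K_{r,r}$ in $G$. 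Dividing by the constant number of labellings of $K_{r,r}$ gives the desired lower bound on the number of copies.

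There is essentially no obstacle: the only point requiring mild care is selecting $c(r)$ large enough that both convexity inequalities land in the polynomial regime where $\binom{x}{r} \geq (x/r)^{r}/r! \gtrsim x^{r}$. Once $c$ is chosen, the two-step computation above is entirely routine.
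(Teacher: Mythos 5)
Your argument is correct and is exactly the standard double-convexity (K\H{o}v\'ari--S\'os--Tur\'an-type) proof of supersaturation for $K_{r,r}$; the paper does not prove this lemma but cites it from Erd\H{o}s--Simonovits, and your write-up is the usual derivation of that cited result, including the correct observation that $e>cn^{2-1/r}$ pushes both applications of Jensen into the regime $\binom{x}{r}=\Theta_r(x^r)$.
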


\begin{proof}[Proof of Theorem \ref{thm:turanblowup}]
	Let $G$ be an $n$-vertex graph with $\omega(n^{2-1/r}(\log n)^{7/r})$ edges. We will prove that if $n$ is sufficiently large, then $G$ contains an $r$-blownup cycle. By Lemma~\ref{lemma:supersaturation}, $\cG_0$ has $\omega(n^{r}(\log n)^{7r})$ edges, so it has average degree $\omega((\log n)^{7r})$. By Lemma \ref{lemma:blowupbiregular}, there exist $D_1,D_2=\omega((\log n)^{7r})$ and a non-empty bipartite subgraph $\cG$ in $\cG_0$ with parts $X_1$ and $X_2$ such that for every $x\in X_1$, we have $d_{\cG}(x)\geq \frac{D_1}{256r^2(\log n)^2}$ and $d_{\cG_0}(x)\leq D_1$, and for every $x\in X_2$, we have $d_{\cG}(x)\geq \frac{D_2}{256r^2(\log n)^2}$ and $d_{\cG_0}(x)\leq D_2$. Without loss of generality, we may assume that $D_1\leq D_2$.
	
	By Lemma \ref{lemma:homcycles}, for every positive integer $k$ we have
	$$\hom(C_{2k},\cG)\geq \left(\frac{D_1}{256r^2(\log n)^2}\right)^k\left(\frac{D_2}{256r^2(\log n)^2}\right)^k=\left(\frac{D_1^{1/r}}{2^{16}r^4(\log n)^4}\right)^k(D_1^{1-1/r}D_2)^k.$$
	Let $k=\lfloor \log n\rfloor$ and let $m$ be the number of vertices in $\cG$. Then $m\leq \binom{n}{r}$, so since $D_1=\omega((\log n)^{7r})$, we have
	$$\left(\frac{D_1^{1/r}}{2^{16}r^4(\log n)^4}\right)^k\geq L^kmk^{3k}$$
	for some $L=\omega(1)$, which gives
	$$\hom(C_{2k},\cG)\geq L^kmk^{3k}(D_1^{1-1/r} D_2)^k.$$
	Therefore
	$$\hom(C_{2k},\cG)^{\frac{1}{2k}}\geq L^{1/2}m^{\frac{1}{2k}}k^{3/2}(D_1^{1-1/r} D_2)^{1/2},$$
	so by Lemma \ref{lemma:blowupcountbad} (i), at most $o(\hom(C_{2k},\cG))$ homomorphic $2k$-cycles in $\cG$ have a pair of vertices which are not disjoint.
	Thus, for sufficiently large $n$, there exists a homomorphic copy of $C_{2k}$ in $\cG$ whose vertices are pairwise disjoint subsets of $V(G)$. This gives a $C_{2k}[r]$ in $G$.
\end{proof}

We will now prove Theorem \ref{thm:turanblowup2k}.

\begin{proof}[Proof of Theorem \ref{thm:turanblowup2k}]
	\sloppy Let $G$ be a graph with $\omega\left(n^{2-\frac{1}{r}+\frac{1}{k+r-1}}(\log n)^{\frac{4k}{r(k+r-1)}}\right)$ edges. By Lemma~\ref{lemma:supersaturation}, $\cG_0$ has $\omega\left(n^{r+\frac{r^2}{k+r-1}}(\log n)^{\frac{4kr}{k+r-1}}\right)$ edges, so it has average degree $\omega\left(n^{\frac{r^2}{k+r-1}}(\log n)^{\frac{4kr}{k+r-1}}\right)$. By Lemma \ref{lemma:blowupbiregular}, $\cG_0$ has a bipartite subgraph $\cG$ with parts $X_1$ and $X_2$ such that for every $x\in X_i$ we have $d_{\cG}(x)\geq \frac{D_i}{256r^2(\log n)^2}$ and $d_{\cG_0}(x)\leq D_i$, where $D_i=\omega\left(n^{\frac{r^2}{k+r-1}}(\log n)^{\frac{4kr}{k+r-1}}\right)$. Without loss of generality, $D_2\geq D_1$. Using Lemma \ref{lemma:homcycles}, we have $$\hom(C_{2k},\cG)\geq \Omega\left(\frac{D_1^kD_2^k}{(\log n)^{4k}}\right)\geq \omega\left((D_1^{1-1/r} D_2)^{k-1} {n \choose r}D_2\right).$$
	Since $\cG$ has $m\leq \binom{n}{r}$ vertices,
	$$\hom(C_{2k},\cG)^{\frac{1}{2k-2}}\geq \omega\left((D_1^{1-1/r} D_2)^{1/2} (mD_2)^{\frac{1}{2k-2}}\right).$$
	Therefore Lemma \ref{lemma:blowupcountbad} (ii) shows that the number of homomorphic $2k$-cycles in $\cG$ whose vertices are not pairwise disjoint subsets of $V(G)$ is $o(\hom(C_{2k},\cG))$. Thus, for sufficiently large $n$, there exists a $2k$-cycle in $\cG$ with pairwise disjoint vertices, yielding a copy of $C_{2k}[r]$ in $G$.
\end{proof}

\section{Concluding remarks} \label{sec:rainbowremarks}

{\bf Rainbow cycles.} We have shown that for a sufficiently large constant $C$, any properly edge-coloured $n$-vertex graph with at least $Cn(\log n)^4$ edges contains a rainbow cycle. However, the best known construction of a graph without a rainbow cycle has only $\Theta(n\log n)$ edges. One such example, found by Keevash, Mubayi, Sudakov and Verstra\"ete \cite{KMSV03}, is the $m$-dimensional cube whose vertices are the subsets of $\{1,2,\dots,m\}$ where $A$ is joined to $A\setminus \{i\}$ for every $i\in A$. The colour of the edge between $A$ and $A\setminus \{i\}$ is $i$. This graph has $2^m$ vertices and $\frac{1}{2}m2^m$ edges and it has no rainbow cycle. Examples with more than $0.58n\log n$ edges were also found by Keevash, Mubayi, Sudakov and Verstra\"ete.

\vspace{3mm}
\noindent
{\bf Colour-isomorphic cycles.} Recall that $f_r(n,H)$ is the smallest number $C$ so that there is a proper edge-colouring of $K_n$ with $C$ colours containing no $r$ vertex-disjoint colour-isomorphic copies of $H$. We have shown that $f_r(n,C_{2k})=\Omega\left(n^{\frac{r}{r-1}\cdot \frac{k-1}{k}}\right)$. Note that our result becomes trivial when $r\geq k$ since $f_{r}(n,H)\geq n-1$ holds for any $r$ and $H$ (as any proper colouring of $K_n$ must use at least $n-1$ colours).

The best general upper bound comes from the probabilistic construction that is used in Theorem \ref{thm:ConlonTyomkyn} and says that $f_r(n,C_{2k})=O\left(n^{\frac{r}{r-1}-\frac{1}{(r-1)k}}\right)$. Another result of Conlon and Tyomkyn {\cite[Theorem 1.4]{CT20}}, proved by a variant of Bukh's random algebraic method \cite{Bukh15}, states that if $H$ contains a cycle, then there exists $r$ such that $f_r(n,H)=O(n)$. It would be interesting to decide what the smallest such $r$ is when $H=C_{2k}$. Our result shows that we must have $r\geq k$. This question was studied in the case $H=C_4$ by Xu, Zhang, Jing and Ge \cite{XZJG20}, who showed that $f_r(n,C_4)=\Theta(n)$ for any $r\geq 3$.

\vspace{3mm}
\noindent
{\bf The Erd\H os--Gy\'arf\'as function.} For positive integers $n$, $p$ and $2\leq q\leq {p \choose 2}$, the Erd\H os-Gy\'arf\'as function $g(n,p,q)$ is defined to be the smallest $C$ such that there exists a (not necessarily proper) colouring of the edges of $K_n$ with $C$ colours such that every induced subgraph on $p$ vertices receives at least $q$ colours. A variant of our Theorem \ref{thm:repeatedcycles} can be used to give a good lower bound for this function when $q$ is close to ${p \choose 2}$. Indeed, assume that $p=2kr$ and $q={p \choose 2}-(r-1)2k+1={p \choose 2}-p+2k+1$ for some $r,k\geq 2$. If we can find $r$ vertex-disjoint colour-isomorphic cycles of length $2k$, then the $p$ vertices of these cycles induce at most ${p \choose 2}-(r-1)2k<q$ colours. Note that the proof of Theorem \ref{thm:repeatedcycles} can be adapted to the case where the edge-colouring is not necessarily proper, but every vertex is incident to at most $O(1)$ edges of any given colour. Now if we have an arbitrary edge-colouring of $K_n$, then either every vertex is incident to at most $2kr-2$ edges of any given colour, or we can choose vertices $u_0,u_1,\dots,u_{2kr-1}$ such that the edges $u_0u_i$ are of the same colour for every $1\leq i\leq 2kr-1$. In this latter case, we have $p$ vertices which induce at most ${p \choose 2}-p+2<q$ colours. In the former case, we can use the strengthened version of Theorem \ref{thm:repeatedcycles}. We obtain the following result.

\begin{theorem} \label{thm:erdosgyarfas}
	For any integers $r,k\geq 2$,
	$$g\left(n,2kr,{2kr \choose 2}-(r-1)2k+1\right)=\Omega(n^{\frac{r}{r-1}\cdot \frac{k-1}{k}}).$$
\end{theorem}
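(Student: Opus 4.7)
The plan is to argue by contradiction: set $p = 2kr$ and $q = \binom{p}{2} - (r-1)2k + 1$, and suppose that $K_n$ is edge-coloured with $C = o(n^{\frac{r}{r-1} \cdot \frac{k-1}{k}})$ colours. I will produce a $p$-subset on which at most $q-1$ distinct colours appear, contradicting the definition of $g$. The dichotomy that drives the proof is the maximum number of edges of any one colour incident to a single vertex.

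If some vertex $u_0$ is incident to $p-1$ edges of a common colour, say to neighbours $u_1,\dots,u_{p-1}$, then on the $p$-set $\{u_0,u_1,\dots,u_{p-1}\}$ the $p-1$ star edges share one colour, so at most $\binom{p}{2} - (p-2) = \binom{p}{2} - p + 2$ distinct colours appear, which is strictly less than $q = \binom{p}{2} - p + 2k + 1$ as soon as $k \geq 1$. In the opposite case every vertex is incident to at most $s := p - 2 = 2kr-2$ edges of each colour, and I would apply a version of Theorem~\ref{thm:repeatedcycles} strengthened to this ``locally bounded'' setting to obtain $r$ pairwise vertex-disjoint, colour-isomorphic copies of $C_{2k}$. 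Since colour-isomorphic copies share the same multiset of at most $2k$ distinct colours across their $2k$ edges each, the union of their $p = 2kr$ vertices carries at most $\binom{p}{2} - (r-1)\cdot 2k = q - 1$ colours, again a contradiction.

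The substantive work, which is also the principal obstacle, is verifying the strengthened form of Theorem~\ref{thm:repeatedcycles} in which properness is replaced by the bounded-local-multiplicity assumption. Its proof uses properness only through Lemma~\ref{lemma:countmatchings} and Lemma~\ref{lemma:fewintersect}. For Lemma~\ref{lemma:countmatchings}, each colour class is now a graph of maximum degree at most $s$ rather than a matching; a greedy count still yields $\Omega_{r,s}(m_i^r)$ matchings of size $r$ in a colour class with $m_i \gg_{r,s} 1$ edges, and small colour classes account for an asymptotically negligible fraction of the $\binom{n}{2}$ edges. Summing and using convexity of $x \mapsto x^r$ then gives $\Omega_{r,s}(n^{2r}/C^{r-1}) = \omega(n^{r+r/k})$ monochromatic $r$-matchings, so $e(\cG) = \omega(n^{r+r/k})$ as before. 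For Lemma~\ref{lemma:fewintersect}, given $x, y \in V(\cG)$ and requiring $yz \in E(\cG)$ together with $z_j = x_i$ for some $i,j$, the colour $c$ of the matching is pinned to the colour of some edge $y_j x_i$, giving at most $r^2$ choices; once $c$ is fixed, each coordinate $z_\ell$ is one of at most $s$ neighbours of $y_\ell$ in colour $c$, giving a total of $r^2 s^r = O_{r,s}(1)$ choices for $z$. With these modifications Theorem~\ref{thm:general 2kcycle} applies verbatim (with a possibly larger constant), and the remainder of the proof of Theorem~\ref{thm:repeatedcycles} carries over to deliver the $r$ colour-isomorphic, vertex-disjoint $C_{2k}$'s needed in the second case.
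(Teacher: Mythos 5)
Your proposal is correct and follows essentially the same route as the paper: the same dichotomy on whether some vertex meets $2kr-1$ edges of one colour, and otherwise an appeal to a version of Theorem~\ref{thm:repeatedcycles} strengthened to colourings with bounded local colour multiplicity. In fact you supply more detail than the paper does on how Lemmas~\ref{lemma:countmatchings} and~\ref{lemma:fewintersect} adapt to that setting (the $r^2s^r$ bound and the greedy matching count are both right), whereas the paper only asserts that the adaptation works.
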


This generalises a recent result of Fish, Pohoata and Sheffer {\cite[Theorem 1.1]{FPS20}}, which is Theorem~\ref{thm:erdosgyarfas} in the special case $r=2$.

\vspace{3mm}
\noindent
{\bf Blow-ups of cycles.} We have shown that $\ex(n,\mathcal{C}[r])=O(n^{2-1/r}(\log n)^{7/r})$. On the other hand, a random graph with edge probabilities $p=\frac{n^{-1/r}}{2}$ contains no $r$-blownup cycles with probability at least $1/2$, so $\ex(n,\mathcal{C}[r])=\Omega(n^{2-1/r})$. We pose the following question.

\begin{question}
	Let $r\geq 2$. Is it true that
	$\ex(n,\mathcal{C}[r])=\Theta(n^{2-1/r})$?
\end{question}

Finally, regarding a single forbidden blownup cycle, we reiterate the conjecture of Grzesik, Janzer and Nagy \cite{GJN19} stating that $\ex(n,C_{2k}[r])=O(n^{2-\frac{1}{r}+\frac{1}{kr}})$.

\vspace{3mm}
\noindent
{\bf Tur\'an number of $s$-regular graphs.} For any even $s$ and any $\d>0$, we have found an $s$-regular graph $H$ (namely $H=C_{2k}\lbrack s/2\rbrack$ for sufficiently large $k$) such that $\ex(n,H)=O(n^{2-\frac{2}{s}+\d})$. We believe that such graphs exist for odd $s$ too.

\begin{conjecture} \label{con:regturan}
	Let $s\geq 3$ be odd and let $\d>0$. Then there exists an $s$-regular graph $H$ such that $\ex(n,H)=O(n^{2-\frac{2}{s}+\d})$.
\end{conjecture}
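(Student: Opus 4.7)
The plan is to exhibit, for each odd $s=2r+1\ge 3$ and each $\delta>0$, a bipartite $(2r+1)$-regular graph $H=H_{k,r,\delta}$ satisfying $\ex(n,H)=O(n^{2-2/(2r+1)+\delta})$, mirroring the role played by $C_{2k}[r]$ in the even case. For $r\ge 2$, I would take $H_{k,r}$ to be $C_{2k}[r]$, with blow-up parts $V_1,\dots,V_{2k}$ and $k=k(r,\delta)$ a sufficiently large odd integer, augmented by a perfect matching between each pair of antipodal parts $V_i$ and $V_{i+k}$. Since $k$ is odd, antipodal parts lie in opposite bipartition classes of $C_{2k}[r]$, so $H_{k,r}$ remains bipartite, and each vertex gains exactly one matching edge, making $H_{k,r}$ a $(2r+1)$-regular bipartite graph; the blow-up parts being of size $r$ prevents $H_{k,r}$ from containing $K_{r+1,r+1}$, as required since $2-2/(2r+1)<2-1/(r+1)$. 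The case $r=1$ is more delicate: the antipodal-matching construction creates $C_4$'s of the form $V_i\to V_{i+1}\to V_{i+1+k}\to V_{i+k}\to V_i$ using two matching edges, which forces $\ex\ge\Omega(n^{3/2})>n^{4/3+\delta}$. For $r=1$ I would instead take $H$ to be the Heawood graph ($3$-regular, bipartite, girth $6$), or more generally the incidence graph of a generalized polygon when $2r$ is a prime power.

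To bound $\ex(n,H_{k,r})$ for $r\ge 2$, I would follow the auxiliary-graph method behind Theorem~\ref{thm:turanblowup2k}. Given $G$ on $n$ vertices with $e(G)=\omega(n^{2-2/(2r+1)+\delta})$, Theorem~\ref{thm:turanblowup2k} already guarantees many copies of $C_{2k}[r]$ in $G$, since the threshold $n^{2-1/r+1/(k+r-1)+o(1)}$ is strictly below our hypothesis for large $k$ (the gap being $n^{1/[r(2r+1)]+\Omega(\delta)}$). The additional requirement is that such a copy of $C_{2k}[r]$ also admits, for every $i$, a perfect matching between $V_i$ and $V_{i+k}$ in $G$. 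I would encode this by enlarging the auxiliary graph from Definition~\ref{def:blowupaux}: take vertices to be ordered pairs $(U,U')$ of disjoint $r$-subsets of $V(G)$ joined by a perfect matching in $G$, with two such pairs adjacent exactly when the appropriate $K_{r,r}$-connections exist in $G$. A $2k$-cycle in the enlarged auxiliary graph with pairwise disjoint vertex sets directly produces a copy of $H_{k,r}$, and can be extracted via Lemma~\ref{lemma:bipartite with vertices} after supersaturating the vertices and edges of the enlarged graph.

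The main obstacle is the supersaturation step for the enlarged auxiliary graph. Counting matched $r$-subset pairs in $G$ at density $n^{-2/(2r+1)+\delta}$ is not immediately controlled by Lemma~\ref{lemma:supersaturation}: the expected number of $G$-edges between two random disjoint $r$-subsets is only $r^2\cdot e(G)/\binom{n}{2}=O(n^{-2/(2r+1)+\delta})$, far below the threshold for a matching of size $r$ when $\delta$ is small, so a naive random-sampling argument fails. One must therefore exploit $G$'s global structure to find matched pairs in a coordinated way that aligns with a $C_{2k}[r]$-backbone, converting the surplus density $n^{1/[r(2r+1)]+\Omega(\delta)}$ over the $C_{2k}[r]$-threshold into a quantitative guarantee for the matching substructure. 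Producing such a codegree supersaturation, and making it compatible with Lemma~\ref{lemma:bipartite with vertices}, is the heart of the approach. For $r=1$ the situation is even more delicate, since the target $n^{4/3+\delta}$ forbids all $K_{2,2}$'s: here one must show that the Heawood graph appears in every $n$-vertex graph with $\omega(n^{4/3+\delta})$ edges, a statement of Bondy--Simonovits strength but for a specific non-cycle graph, which appears to require new ideas beyond this paper.
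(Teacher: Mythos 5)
The statement you have been given is Conjecture~\ref{con:regturan}, which the paper explicitly leaves open: the author only remarks that the methods of the paper combined with ideas from \cite{JN17} ``may be useful for proving this conjecture,'' and proves nothing in the odd case. So your proposal is not being measured against an existing proof, and, as you yourself concede twice, it is not a proof either. The candidate graph for $r\ge 2$ (the blow-up $C_{2k}[r]$ with $k$ odd, augmented by perfect matchings between antipodal classes $V_i$ and $V_{i+k}$) is a sensible $(2r+1)$-regular bipartite candidate, and your checks that it stays bipartite and that the target exponent $2-\frac{2}{2r+1}+\delta$ sits strictly above the $C_{2k}[r]$-threshold of Theorem~\ref{thm:turanblowup2k} for large $k$ are correct. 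But the argument stops exactly where the difficulty begins. At edge density $n^{-2/(2r+1)+\delta}$, a typical pair of disjoint $r$-sets of $V(G)$ spans $O(r^2 n^{-2/(2r+1)+\delta})=o(1)$ edges, so almost every copy of $C_{2k}[r]$ produced by the auxiliary-graph machinery has \emph{no} edges at all between antipodal classes, let alone perfect matchings. Your enlarged auxiliary graph of matched pairs therefore has no a priori lower bound on its number of edges; Lemma~\ref{lemma:supersaturation} counts copies of $K_{r,r}$ and says nothing about this structure. Converting the surplus density over the $C_{2k}[r]$-threshold into a count of coordinated matched pairs is precisely the missing supersaturation statement, and without it the second half of your plan (applying Lemma~\ref{lemma:bipartite with vertices} to the enlarged graph) cannot even start. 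You name this obstacle accurately, but naming it does not remove it.

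The case $s=3$ is a second, independent gap. No $3$-regular graph is currently known to satisfy $\ex(n,H)=O(n^{4/3+\delta})$. For the Heawood graph, a $C_6$-free graph is Heawood-free, so $\ex(n,H)\ge \ex(n,C_6)=\Theta(n^{4/3})$ and the conjectural order is right, but the known upper bounds are of order $n^{5/3}$; proving $O(n^{4/3+\delta})$ would be a Bondy--Simonovits-strength theorem for a specific non-cycle, which, as you say, requires ideas not present in this paper. Since the conjecture demands a graph for \emph{every} odd $s\ge 3$, leaving $s=3$ unresolved means the statement remains unproved even if the $r\ge 2$ supersaturation were supplied. In short: your construction is plausible and aligned with the author's own hint, but both steps you flag as ``the heart of the approach'' and as ``requiring new ideas'' are genuinely open, so what you have written is a research programme, not a proof.
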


We think that our methods, combined with ideas from \cite{JN17}, may be useful for proving this conjecture. As we have remarked in the introduction, the probabilistic deletion method shows that if $H$ is a bipartite graph with minimum degree $s\geq 2$, then there exists $\e=\e(H)>0$ such that $\ex(n,H)=\Omega(n^{2-\frac{2}{s}+\e})$. This shows that Conjecture \ref{con:regturan} is tight (if true).

\section*{Acknowledgement}

I am grateful to Istv\'an Tomon for pointing out (a variant of) Lemma \ref{lemma:hom inequality} which simplified the proofs compared to an earlier version of the paper. I thank Barnab\'as Janzer for helpful remarks on a draft. I am also grateful to David Conlon for drawing my attention to Conjecture~\ref{con:nondegenerate} and to Cosmin Pohoata for pointing out the connection with the Erd\H os--Gy\'arf\'as function. Finally, I thank the anonymous referee for many helpful comments which substantially improved the presentation of the paper.

\bibliographystyle{abbrv}
\bibliography{bibliography}

\begin{thebibliography}{10}

\bibitem{ARS99}
N.~Alon, L.~R{\'o}nyai, and T.~Szab{\'o}.
\newblock Norm-graphs: variations and applications.
\newblock {\em Journal of Combinatorial Theory, Series B}, 76(2):280--290,
  1999.

\bibitem{BS74}
J.~A. Bondy and M.~Simonovits.
\newblock Cycles of even length in graphs.
\newblock {\em Journal of Combinatorial Theory, Series B}, 16(2):97--105, 1974.

\bibitem{Bukh15}
B.~Bukh.
\newblock Random algebraic construction of extremal graphs.
\newblock {\em Bulletin of the London Mathematical Society}, 47(6):939--945,
  2015.

\bibitem{CT20}
D.~Conlon and M.~Tyomkyn.
\newblock Repeated patterns in proper colourings.
\newblock {\em arXiv preprint arXiv:2002.00921}, 2020.

\bibitem{DLS13}
S.~Das, C.~Lee, and B.~Sudakov.
\newblock Rainbow {T}ur{\'a}n problem for even cycles.
\newblock {\em European Journal of Combinatorics}, 34(5):905--915, 2013.

\bibitem{ESonline}
P.~{Erd\H os} and M.~Simonovits.
\newblock Lower bound for {Tur\'an} number for bipartite non-degenerate graphs.
\newblock
  \url{http://www.math.ucsd.edu/~erdosproblems/erdos/newproblems/TuranNondegenerate.html}.

\bibitem{ESi66}
P.~Erd\H{o}s and M.~Simonovits.
\newblock A limit theorem in graph theory.
\newblock {\em Studia Sci. Math. Hungar.}, 1:51--57, 1966.

\bibitem{ES46}
P.~Erd\H{o}s and A.~H. Stone.
\newblock On the structure of linear graphs.
\newblock {\em Bull. Amer. Math. Soc.}, 52:1087--1091, 1946.

\bibitem{ES70}
P.~Erd{\H{o}}s and M.~Simonovits.
\newblock Some extremal problems in graph theory.
\newblock In {\em Combinatorial theory and its applications, I}, (Proc.
  Colloq., Balatonf{\"{u}}red, 1969), pages 377--390. North-Holland, Amsterdam,
  1970.

\bibitem{ESi83}
P.~Erd{\H{o}}s and M.~Simonovits.
\newblock Supersaturated graphs and hypergraphs.
\newblock {\em Combinatorica}, 3(2):181--192, 1983.

\bibitem{EGM19}
B.~Ergemlidze, E.~{Gy\H ori}, and A.~Methuku.
\newblock On the rainbow {Tur\'an} number of paths.
\newblock {\em The Electronic Journal of Combinatorics}, 26(1):P1.17, 2019.

\bibitem{FS83}
R.~J. Faudree and M.~Simonovits.
\newblock On a class of degenerate extremal graph problems.
\newblock {\em Combinatorica}, 3(1):83--93, 1983.

\bibitem{FPS20}
S.~Fish, C.~Pohoata, and A.~Sheffer.
\newblock Local properties via color energy graphs and forbidden
  configurations.
\newblock {\em SIAM Journal on Discrete Mathematics}, 34(1):177--187, 2020.

\bibitem{XZJG20}
G.~Ge, Y.~Jing, Z.~Xu, and T.~Zhang.
\newblock Color isomorphic even cycles and a related {R}amsey problem.
\newblock {\em SIAM Journal on Discrete Mathematics}, 34(3):1999--2008, 2020.

\bibitem{GJN19}
A.~Grzesik, O.~Janzer, and Z.~L. Nagy.
\newblock The {Tur\'an} number of blow-ups of trees.
\newblock {\em Journal of Combinatorial Theory, Series B}, to appear.

\bibitem{JN17}
T.~Jiang and A.~Newman.
\newblock Small dense subgraphs of a graph.
\newblock {\em SIAM Journal on Discrete Mathematics}, 31(1):124--142, 2017.

\bibitem{JS12}
T.~Jiang and R.~Seiver.
\newblock Tur\'an numbers of subdivided graphs.
\newblock {\em SIAM J. Discrete Math.}, 26:1238--1255, 2012.

\bibitem{JPS17}
D.~Johnston, C.~Palmer, and A.~Sarkar.
\newblock Rainbow {Tur\'an} problems for paths and forests of stars.
\newblock {\em The Electronic Journal of Combinatorics}, 24(1):P1.34, 2017.

\bibitem{JR19}
D.~Johnston and P.~Rombach.
\newblock Lower bounds for rainbow {Tur\'an} numbers of paths and other trees.
\newblock {\em arXiv preprint arXiv:1901.03308}, 2019.

\bibitem{KMSV03}
P.~Keevash, D.~Mubayi, B.~Sudakov, and J.~Verstra{\"e}te.
\newblock Rainbow {T}ur{\'a}n problems.
\newblock {\em Combinatorics, Probability and Computing}, 16(1):109--126, 2007.

\bibitem{KRS96}
J.~Koll\'ar, L.~R\'onyai, and T.~Szab\'o.
\newblock Norm-graphs and bipartite {T}ur\'an numbers.
\newblock {\em Combinatorica}, 16:399--406, 1996.

\bibitem{Si92}
A.~F. Sidorenko.
\newblock Inequalities for functionals generated by bipartite graphs.
\newblock {\em Discrete Mathematics and Applications}, 2(5):489--504, 1992.

\bibitem{St13}
R.~P. Stanley.
\newblock {\em Algebraic combinatorics}.
\newblock Springer, 2013.

\end{thebibliography}

\end{document}